\newtheorem{Th}{Theorem}[section]
\newtheorem{Prop}[Th]{Proposition}
\newtheorem{Lemma}[Th]{Lemma}
\theoremstyle{remark}
\newtheorem{Remark}[Th]{Remark}
\newtheorem*{Example}{Example}
\newtheorem{Definition}{Definition}
\newtheorem*{ratner}{Ratner's property}
\newcommand{\cb}{\mathcal{B}}
\newcommand{\cc}{\mathcal{C}}
\newcommand{\ct}{\mathcal{T}}
\newcommand{\cs}{\mathcal{S}}
\newcommand{\ot}{\otimes}
\newcommand{\la}{\lambda}
\newcommand{\vep}{\varepsilon}
\newcommand{\rat}{\operatorname{R}}
\newcommand{\R}{{\mathbb{R}}}
\newcommand{\T}{{\mathbb{T}}}
\newcommand{\cir}{{\mathbb{S}^1}}
\newcommand{\C}{{\mathbb{C}}}
\newcommand{\Z}{{\mathbb{Z}}}
\newcommand{\N}{{\mathbb{N}}}
\newcommand{\Q}{{\mathbb{Q}}}
\newcommand{\xbm}{(X,\mathcal{B},\mu)}
\newcommand{\ycn}{(Y,\mathcal{C},\nu)}
\newcommand{\var}{\operatorname{Var}}
\newcommand{\bez}{\nopagebreak\hspace*{\fill}\nolinebreak$\Box$}
\begin{document}

\title[Ratner's property and mixing for special
flows]{Ratner's property and  mixing for special flows over
two--dimensional rotations}
\author{K.\ Fr\k{a}czek \and M.\
Lema\'nczyk}
\address{K.\ Fr\k{a}czek \\ Faculty of Mathematics
and Computer Science\\ Nicolaus Copernicus University\\ ul.
Chopina 12/18, 87-100 Toru\'n, Poland}
\address{M.\ Lema\'nczyk\\
Institute of Mathematics of Polish Academy of Sciences, ul.\
\'Sniadeckich 8, 00-950 Warszawa, Poland and Faculty of
Mathematics and Computer Science\\ Nicolaus Copernicus
University\\ ul. Chopina 12/18, 87-100 Toru\'n, Poland}

\email{fraczek@mat.umk.pl, mlem@mat.umk.pl}

\subjclass[2000]{37A10, 37C40}
\thanks{Research partially
supported by MNiSzW grant N N201 384834 and Marie Curie ``Transfer
of Knowledge'' program, project MTKD-CT-2005-030042 (TODEQ)}

\begin{abstract}We consider  special flows over
two-dimensional rotations by $(\alpha,\beta)$ on $\T^2$ and under
piecewise $C^2$ roof functions $f$ satisfying  von Neumann's
condition $$\int_{\T^2}f_x(x,y)\,dx\,dy\neq 0\neq
\int_{\T^2}f_y(x,y)\,dx\,dy.$$ Such flows are shown to be always
weakly mixing and never partially rigid. For an uncountable set of
$(\alpha,\beta)$ with both $\alpha$ and $\beta$ of unbounded
partial quotients the strong mixing property is  proved to hold.
It is also proved that while specifying to a subclass of roof
functions and to ergodic rotations for which $\alpha$ and $\beta$
are of bounded partial quotients  the corresponding special flows
enjoy so called weak Ratner's property. As a consequence, such
flows turn out to be mildly mixing.
\end{abstract}

\maketitle

\section{Introduction} Mixing properties, especially
strong and mild mixing, of special flows over one- and
multi-dimensional irrational rotations under some regular roof
functions  have  been intensively studied during last few years,
e.g.\ \cite{Fa2}--\cite{Fa3}, \cite{Fr-Le}, \cite{Fr-Le-Le},
\cite{Kh-Si}, \cite{Ko0}--\cite{Le}. Such special flows appear
often  while studying smooth flows (or at least ergodic components
of smooth  flows) on some compact manifolds; indeed,  a choice of
a natural transversal may lead to a special representation over a
rotation, see e.g.\ \cite{Ar}, \cite{Fr-Le}, \cite{Ka-Ha},
\cite{Ko76}.

It is already in 1932 when von Neumann \cite{vonN} considered
special flows over  irrational rotations on $\T=[0,1)$ under  roof
functions $f$ which were piecewise $C^1$. He proved weak mixing of
such flows whenever the condition
\begin{equation}\label{vN0}\int_{\T}f'(x)\,dx\neq0\end{equation}
was satisfied. Linear functions $f(x)=ax+b$ for $0\leq x<1$ (with
$a\neq0$ and $b\in\R$ so that $f>0$) are the simplest examples of
roof functions satisfying von Neumann's condition~(\ref{vN0}).
Piecewise $C^1$--functions are of bounded variation, hence, as
shown by Kochergin \cite{Ko0} in 1972, the corresponding special
flows are not mixing. A natural question whether a special flow
over an irrational rotation by $\alpha\in[0,1)$ under $f$
piecewise $C^1$ and satisfying~(\ref{vN0}) can enjoy a stronger
property than weak mixing  was answered positively
in~\cite{Fr-Le}; indeed, such flows  turn out to be mildly mixing
whenever $\alpha$ has bounded partial quotients. As a matter of
fact, the mild mixing property has been proved in \cite{Fr-Le} in
two independent steps: first, the absence of partial rigidity
(which does not require any Diophantine condition on $\alpha$) has
been proved and then so called Ratner's property has been
established for $\alpha$ with bounded partial quotients.

In the present paper we consider special flows over an egodic
two-dimensional rotation  $T(x,y)=(x+\alpha,y+\beta)$. Our roof
functions $f:\T^2\to\R_+$ will be piecewise $C^2$ (discontinuities
of $f$ are contained in finitely many horizontal and vertical
lines, see Definition~\ref{defpiec}) and will satisfy a
two-dimensional analog of (\ref{vN0})

\begin{equation}
\label{vonNeumann02}
\int_{\T^2}f_x(x,y)\,dx\,dy\neq 0\text{ or }
\int_{\T^2}f_y(x,y)\,dx\,dy\neq 0.
\end{equation}
In what follows (\ref{vonNeumann02}) will be referred to as the
{\em weak von-Neumann's condition}. We will observe that this
condition implies the weak mixing property of the corresponding
special flows $T^f$ (Theorem~\ref{slabemieszanie}) as well as the
absence of partial rigidity (Theorem~\ref{aopr}). As in
\cite{Fr-Le}, our aim will be to prove that such flows are mildly
mixing.  If we want the strategy from \cite{Fr-Le} of showing the
mild mixing property (under some Diophantine assumptions on
$(\alpha,\beta)$) to work we need to prove an analog of Ratner's
property for such flows. This is done only partially, namely, in a
restricted class of roof functions satisfying (\ref{vonNeumann02})
and both $\alpha$ and $\beta$ are assumed to have bounded partial
quotients, see Theorem~\ref{specialR} in which so called weak
Ratner's property is proved to hold. The class of roof functions
includes all positive linear functions $f(x,y)=ax+by+c$ with
$a/b\in\R\setminus\Q$. Then, the mild mixing property follows
(Theorem~\ref{mmixing}). Proving (even the weak) Ratner's property
of such flows is of independent interest, as it has some other
ergodic consequences (Theorem~\ref{prpr}, see also~\cite{Th}).
Recall that the original notion, introduced by Ratner in \cite{Ra}
and called there ${\mathcal H}_p$-property, is as follows:
\begin{ratner}
Let $(X,d)$ be a $\sigma$--compact metric space, $\mu$ a
probability Borel
 measure on $(X,d)$ and $(S_t)_{t\in\R}$ a
$\mu$--preserving flow. The flow $(S_t)_{t\in\R}$ is called
$\mathcal{H}_p$--flow, $p\neq 0$, if for every $\vep>0$ and
$N\in\N$ there exist $\kappa=\kappa(\vep)>0$,
$\delta=\delta(\vep,N)>0$ and a Borel subset $Z=Z(\vep,N)\subset
X$ with $\mu(Z)>1-\vep$  such that if $x,x'\in Z$, $x'$ is not in
the orbit of $x$ and $d(x,x')<\delta$, then there are
$M=M(x,x')\geq N$, $L=L(x,x')\geq N$ with $L/M\geq\kappa$  such
that if we denote
\[K^\pm=\{n\in\Z\cap[M,M+L]:d(S_{np}(x),S_{(n\pm 1)p}(x'))<\vep\}\]
then either $\#K^+/L>1-\vep$ or $\#K^-/L>1-\vep$.
\end{ratner}
Ratner's property, originally proved by M. Ratner \cite{Ra} for
horocycle flows, in the framework of special flows over irrational
rotations first appeared in \cite{Fr-Le}. In fact, already in
\cite{Fr-Le} the original definition of Ratner has been modified
and $\pm p$ was replaced by a finite subset of $\R\setminus\{0\}$.
In the present paper we need a further weakening of the
definition: we introduce a compact set $P\subset\R\setminus\{0\}$
so that the orbits of two close different points are close up to a
shift of time belonging to $P$ on  sufficiently long pieces of
orbits. We call this property {\em weak Ratner's property} (see
Definition~\ref{defrat}).

Unlike the one-dimensional rotation case, special flows over
two-dimensional rotations even under smooth functions can be
mixing, see  \cite{Fa1},  \cite{Fa3}. In
Section~\ref{strongmixing} we show that special flows with
piecewise $C^2$ roof functions and satisfying the following {\em
strong von Neumann's condition}
\begin{equation}\label{vonNeumann03}
\int_{\T^2}f_x(x,y)\,dx\,dy\neq 0\text{ and }
\int_{\T^2}f_y(x,y)\,dx\,dy\neq 0\end{equation} are mixing for
uncountably many $(\alpha,\beta)\in\T^2$ (Theorem~\ref{smixing}).
The main tool to prove mixing property we use is a Fayad's
criterion from \cite{Fa1}. In particular, in the linear case
$f(x,y)=ax+by+c$ mixing is possible for a special choice of
$\alpha,\beta$ -- a phenomenon which can not happen in the
one-dimensional case.

\subsection{Plan of the paper}The plan on the paper is as follows:
Section~\ref{secwprow} introduces terminology and notation that
will be used throughout the remainder of the paper. In
Section~\ref{secweakm} we will show weak mixing of the special
flow $T^f$ (Theorem~\ref{slabemieszanie}) assuming that the roof
function $f:\T^2\to\R_+$ is piecewise $C^2$ and satisfies
(\ref{vonNeumann02}). In Section~\ref{secabsrig} we will establish
the absence of partial rigidity under the same assumption
(Theorem~\ref{aopr}). The proofs of these results are proved in
spirit to the one--dimensional case in \cite{Fr-Le}.

Next part of the paper deals with mild mixing. We use a criterion
from \cite{Fr-Le}: If a flow is not partially rigid and it is a
finite extension of each of its non-trivial factors (finite fibers
factor property) then it is mildly mixing. The absence of partial
rigidity being already established, in order to deal with the
second assumption the notion of weak Ratner's property is
introduced in Section~\ref{secweakrat}. Then, in
Theorem~\ref{prpr}, it is proved that weak Ratner's property
implies finite fibers factor property.

In Section~\ref{secwrspf} we present techniques
(Lemma~\ref{generalrat} and Proposition~\ref{propklu}) that help
us in proving the weak Ratner property to hold for special flows
built over rotations. In Section~\ref{secconcrspf} we introduce a
class of piecewise $C^2$ von Neumann roof functions on $\T^2$ and
we consider the corresponding special flows over ergodic rotations
whose both coordinates have bounded partial quotients. Using
techniques from Section~\ref{secwrspf} for this class of special
flows, we prove weak Ratner's property (see
Theorem~\ref{specialR}), which finally establishes mild mixing.
Moreover, in Section~\ref{MMM} we provide an example from this
class  which is mildly mixing but is not mixing.

Section~\ref{strongmixing} deals with mixing property for special
flows with piecewise $C^2$ roof functions satisfying strong von
Neumann's condition (\ref{vonNeumann03}) and it uses methods
different from  earlier sections. We first notice that Fayad's
criterion \cite{Fa1} (alternating uniform stretch of the Birkhoff
sums in the vertical and horizontal directions) of mixing of
special flows for $C^2$ roof functions can be extended to
piecewise $C^2$ case. Then we  prove  mixing over an uncountable
family of rotations by $(\alpha,\beta)$ on $\T^2$ (both $\alpha$
and $\beta$ have unbounded partial quotients).

We will discuss some other consequences of the results proved in
the paper as well as some open problems in Section~\ref{concl}.

Our special thanks go to A.\ Katok who was the first to conjecture
that already linearity  over two dimensional rotations may be
sufficient for strong mixing property of the corresponding special
flows. Such mixing flows are apparently the simplest examples of
mixing flows in the framework of special flows under regular roof
functions and over multi-dimensional rotations.

We also thank both referees for numerous comments and suggestions
which led both to a better presentation as well as to stronger
results than in the first version of the paper. Especially, we
thank one of the referees for proposing the main idea of the proof
of Theorem~\ref{specialR}.

\section{Notation}\label{secwprow}
Let $T$ be an ergodic automorphism of  a standard probability
Borel space $(X,\mathcal{B},\mu)$, this is for every
$T$--invariant set $A\in\mathcal{B}$, either $A$ or its complement
$X\setminus A$ has measure zero. Assume $f:X\to\R$ is a strictly
positive integrable function and let $\mathcal{ B}(\R)$ and
$\la_{\R}$ denote Borel $\sigma$--algebra and Lebesgue measure on
$\R$ respectively. Then by $\mathcal{T}^f=(T^f_t)_{t\in\R}$ we
will mean the corresponding special flow under $f$ (see e.g.\
\cite{Co-Fo-Si}, Chapter 11) acting on $(X^f,\mathcal{
B}^f,\mu^f)$, where $X^f=\{(x,s)\in X\times \R:\:0\leq s<f(x)\}$
and $\mathcal{ B}^f$ $(\mu^f)$ is the restriction of $\mathcal{
B}\otimes\mathcal{ B}(\R)$ $(\mu\otimes\lambda_\R)$ to $X^f$.
Under the action of the flow $\mathcal{T}^f$ each point in $X^f$
moves vertically at unit speed, and we identify the point
$(x,f(x))$ with $(Tx,0)$.  Given $m\in\Z$ we put
\[f^{(m)}(x)=\left\{\begin{array}{ccc}
f(x)+f(Tx)+\ldots+f(T^{m-1}x) & \mbox{if} & m>0\\ 0 &
\mbox{if} &
m=0\\ -\left(f(T^mx)+\ldots+f(T^{-1}x)\right)  & \mbox{if} & m<0.
\end{array}\right.\]
Then  for every $(x,s)\in X^f$ we have
\[T^f_t(x,s)=(T^nx,s+t-f^{(n)}(x)),\]
where $n\in\Z$ is  unique such that $f^{(n)}(x)\leq
s+t<f^{(n+1)}(x)$.

If $X$ is equipped with a metric $d$ whose Borel $\sigma$--algebra
is equal to $\mathcal{B}$ then we will consider on $X^f$ the
metric $d^f$ defined by
\begin{equation}\label{prodmet}
d^f((x_1,s_1),(x_2,s_2))=d(x_1,x_2)+|s_1-s_2|\text{ for
}(x_1,s_1),(x_2,s_2)\in X^f.
\end{equation}

\begin{Definition}A measure-preserving flow $(S_t)_{t\in\R}$ on a standard
probability Borel space $(X,\mathcal{B},\mu)$ is {\em  mixing} if
\[\lim_{t\to\infty}\mu(S_tA\cap B)=\mu(A)\mu(B)\text{ for all }
A,B\in\mathcal{B}.\]
If for all $A,B\in\mathcal{B}$
\[\lim_{T\to\infty}\frac{1}{T}\int_0^T|\mu(S_tA\cap
B)-\mu(A)\mu(B)|\,dt=0\] then $(S_t)_{t\in\R}$ is {\em weakly
mixing}.
\end{Definition}

Of course, mixing implies weak mixing, and the following
conditions are equivalent (see \cite{Co-Fo-Si}):
\begin{itemize}
\item[(i)] $(S_t)_{t\in\R}$ is  weakly mixing;
\item[(ii)] the Cartesian product flow $(S_t\times S'_t)_{t\in\R}$ is ergodic
provided that $(S'_t)_{t\in\R}$ is an ergodic flow on a standard
 probability Borel space;
\item[(iii)] if $F:X\to\C$ is an eigenfunction corresponding to an eigenvalue $\theta \in \R$, i.e.\ $F(S_tx)=e^{i t\theta}F(x)$
then $\theta=0$ and $F$ is constant.
\end{itemize}

\begin{Definition}
A measure-preserving flow $(S_t)_{t\in\R}$ on a standard
probability Borel space is {\em mildly mixing} if its Cartesian
product with an arbitrary ergodic (finite or infinite
conservative) measure-preserving transformation remains ergodic.
\end{Definition}

Recall that a measure-preserving flow $(S'_t)_{t\in\R}$ on a
standard probability Borel space $(X',\mathcal{B}',\mu')$ is a
{\em factor} of the flow $(S_t)_{t\in\R}$ if there exists a
measurable map $\psi:X\to X'$ such that the image of $\mu$ via
$\psi$ is $\mu'$ and $\psi\circ S_t=S'_t\circ\psi$ for every
$t\in\R$. Then the flow is $(S_t)_{t\in\R}$ called an {\em
extension} of $(S'_t)_{t\in\R}$. If additionally, $\psi$ is
finite-to-one almost everywhere then $(S_t)_{t\in\R}$ a {\em
finite extension} of $(S'_t)_{t\in\R}$.

A measure-preserving flow $(S_t)_{t\in\R}$ on a standard
probability Borel space $(X,\mathcal{B},\mu)$ is {\em rigid} if
there exists a sequence $(t_n)$, $t_n\to\infty$ such that
$\mu(S_{t_n}B\triangle B)\to 0$ as $n\to\infty$ for every
$B\in\mathcal{B}$.

It is also proved in \cite{Fu-We} that a probability
measure--preserving flow $(S_t)_{t\in\R}$ on $(X,\mathcal{B},\mu)$
is mildly mixing iff $(S_t)_{t\in\R}$ has no non-trivial rigid
factor, i.e.\ \[\liminf_{t\to\infty}\mu(S_tB\triangle B)>0\text{
for every }B\in\mathcal{B}\text{ with } 0<\mu(B)<1.\] It follows
that the  mixing property of a flow implies its mild mixing which
in turn implies the weak mixing property.

Assume that $T$ is an ergodic automorphism and $f:X\to\R_+$ is in
$L^1\xbm$. It is well-known (see e.g.\ \cite{KaR}) that the
special flow $T^f$ is  weakly mixing if and only if for every
$s\in\R\setminus\{0\}$ the equation
\begin{equation}\label{kochequ}
\psi(Tx)/\psi(x)= e^{2\pi isf(x)}
\end{equation}
has no measurable solution $\psi:X\to\cir=\{z\in\C:\:|z|=1\}$.
Assume moreover that $T$ is {\em rigid}, i.e.\ for some increasing
sequence $(q_n)$,  $\mu(T^{q_n}A\cap A)\to \mu(A)$ for each
$A\in\cb$. We will make use of the following simple criterion of
weak mixing of special flows over rigid systems.

\begin{Prop}\label{cr11} Under the above assumptions
suppose additionally that  there exists $C>0$ such that
\[\left|\int_{X}e^{2\pi i sf^{(n)}(x)}\,d\mu(x)\right|\leq
C/|s|\] for every $s\neq 0$ and for all $n$ large enough. Then
(\ref{kochequ}) has no measurable solution for $s\neq0$ and
therefore the special flow $T^f$ is weakly mixing.
\end{Prop}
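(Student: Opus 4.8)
The plan is to argue by contradiction: assume (\ref{kochequ}) has a measurable solution $\psi\colon X\to\cir$ for some fixed $s\neq0$ and derive a clash with rigidity. First I would iterate the functional equation. Writing $\psi(Tx)=e^{2\pi i sf(x)}\psi(x)$ and composing $n$ times yields the cocycle identity
\[\psi(T^nx)=e^{2\pi i sf^{(n)}(x)}\psi(x)\qquad\text{for all }n\in\Z,\]
which, since $|\psi|\equiv1$, rearranges to $e^{2\pi i sf^{(n)}(x)}=\psi(T^nx)\overline{\psi(x)}$. Integrating and recognizing the right-hand side as an inner product with the Koopman operator $U_T$ (where $U_Tg=g\circ T$) gives
\[\int_X e^{2\pi i sf^{(n)}(x)}\,d\mu(x)=\int_X\psi(T^nx)\overline{\psi(x)}\,d\mu(x)=\langle U_T^n\psi,\psi\rangle.\]

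Next I would pass to the rigidity sequence $(q_n)$. Since $\mu(T^{q_n}A\cap A)\to\mu(A)$ for every $A\in\cb$, the powers $U_T^{q_n}$ converge to the identity in the strong operator topology of $L^2\xbm$ (the standard reformulation of rigidity, obtained by expanding $\|U_T^{q_n}\mathbf{1}_A-\mathbf{1}_A\|_2^2$ on indicators and extending by density); in particular $\langle U_T^{q_n}\psi,\psi\rangle\to\langle\psi,\psi\rangle=\|\psi\|_2^2=1$. Combined with the identity above, this gives $\int_X e^{2\pi i sf^{(q_n)}(x)}\,d\mu(x)\to1$.

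Now the hypothesis enters. For $n$ large the index $q_n$ is large, so $\bigl|\int_X e^{2\pi i sf^{(q_n)}}\,d\mu\bigr|\le C/|s|$; letting $n\to\infty$ forces $1\le C/|s|$, i.e.\ $|s|\le C$. This already rules out solutions for large $|s|$, and the remaining values are handled by a power trick: if $\psi$ solves (\ref{kochequ}) for $s$, then $\psi^k$ is again a measurable map into $\cir$ solving (\ref{kochequ}) for $ks$, for every $k\in\N$ (equivalently, the set of solvable $s$ is a subgroup of $(\R,+)$, hence either trivial or unbounded). Thus if some $s_0\neq0$ admitted a solution, so would $ks_0$ for all $k$, and choosing $k$ with $|ks_0|>C$ contradicts the bound $|ks_0|\le C$ just obtained. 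Therefore (\ref{kochequ}) has no measurable solution for any $s\neq0$, and by the weak-mixing criterion recalled before the proposition, $T^f$ is weakly mixing.

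I expect the only genuinely delicate point to be this last step: the hypothesized decay $C/|s|$ is informative only for large $|s|$, so the bare bound cannot directly eliminate small nonzero $s$. The group-theoretic reduction, equivalently replacing $\psi$ by a high power $\psi^k$, is precisely what converts the large-$s$ estimate into a statement valid for all $s\neq0$; everything else is a routine iteration-and-rigidity computation.
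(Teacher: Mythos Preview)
Your proof is correct and follows essentially the same route as the paper: iterate the cocycle relation, use rigidity along $(q_n)$ to force the integral to~$1$, invoke the hypothesis to get $1\le C/|s|$, and use powers $\psi^k$ to reach a contradiction. The only cosmetic difference is that the paper applies the power trick from the outset (working with $\psi^k$ and the parameter $ks$ throughout) rather than first deducing $|s|\le C$ and then amplifying, but the content is identical.
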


\begin{proof} Suppose that for some $s\neq0$ and a measurable
$\psi:X\to\cir$
$$\psi(Tx)/\psi(x)= e^{2\pi isf(x)}.$$
Then for all $k\in\Z\setminus\{0\}$ and all $n$ large enough we
have
\[\left|\int_{X}\psi^k(T^{q_n}x)\overline{\psi^k(x)}
\,d\mu(x)\right|= \left|\int_{X}e^{2\pi i
ksf^{(q_n)}(x)}\,d\mu(x)\right|\leq C/|ks|\] and since clearly
$\psi^k\circ T^{q_n}\cdot\overline{\psi^k}\to1$ in measure, when
$n\to\infty$, we obtain a contradiction.
\end{proof}

We denote by $\T^d$ the torus $\R^d/\Z^d$ which we will constantly
identify with the $d$--cube $[0,1)^d$. Let $\lambda_{\T^d}$ stand
for Lebesgue measure on $\T^d$.

A homeomorphism  $T$ of a compact topological space $X$ is called
{\em uniquely ergodic} if it admits a unique $T$--invariant
probability  Borel measure $\mu$. Then the measure-preserving
automorphism $T$ of $(X,\mu)$ is ergodic and for every continuous
function $f:X\to\C$
\begin{equation}\label{uniqerg}
\frac{1}{n}\sum_{k=0}^{n-1}f(T^kx)\to\int_X f\,d\mu\text{
uniformly in }x\in X.
\end{equation}
Recall that if $T:\T^d\to\T^d$ is the rotation by a vector
$(\alpha_1,\ldots,\alpha_d)\in\T^d$ such that
$\alpha_1,\ldots,\alpha_d,1$ are independent over $\Q$ then $T$ is
uniquely ergodic. Moreover,  using standard arguments this gives
(\ref{uniqerg})  for every Riemann integrable function
$f:\T^d\to\C$ with $\mu=\lambda_{\T^d}$.

For a real number $t$ denote by $\{t\}$ its fractional part and by
$\|t\|$ its distance to the nearest integer number.  For an
irrational $\alpha\in\T$ denote by $(q_n)$ its sequence of
denominators (see e.g.\ \cite{Ch}), that is we have
\begin{equation}\label{ulla}
\frac{1}{2q_nq_{n+1}}<\left|\alpha-\frac{p_n}{q_n}\right|
<\frac{1}{q_nq_{n+1}},
\end{equation}
where
\[\begin{array}{ccc}
q_0=1, & q_1=a_1, & q_{n+1}=a_{n+1}q_n+q_{n-1}\\
p_0=0, & p_1=1,  & p_{n+1}=a_{n+1}p_n+p_{n-1}.
\end{array}\]
Let  $[0;a_1,a_2,\dots]$ stand for the continued fraction
expansion of $\alpha$. The  rational numbers $p_n/q_n$ are called
{\em the convergents} of the continued fraction. The number
$\alpha$ is said to have {\em bounded partial quotients} if the
sequence $(a_n)$ is bounded. Then there exists a natural number
$C$ such that $\|n\alpha\|\geq 1/(C|n|)$ for every non--zero
integer $n$. It follows that $q_{s+1}\leq Cq_s$ holds for each
natural $s$.

\begin{Definition}\label{defpiec}
A function $f:\T^2\to\R$ is called a {\em piecewise
$C^r$--function} if there exist $0\leq a_1<\ldots<a_N<1$ and
$0\leq b_1<\ldots<b_M<1$ such that
$f:(a_j,a_{j+1})\times(b_k,b_{k+1})\to\R$ is of class $C^r$ and it
has a $C^r$--extension to $[a_j,a_{j+1}]\times[b_k,b_{k+1}]$ for
every $1\leq j\leq N$ and $1\leq k\leq M$, where $a_{N+1}=a_1$ and
$b_{M+1}=b_1$ and the intervals $[a_N,a_1]$ and $[b_M,b_1]$ are
meant mod~$1$.
\end{Definition}

\begin{Remark}
Modifying $f$ on a set of measure zero, if necessary, we can
always assume that $f$ is of class $C^r$ on every set
$[a_j,a_{j+1})\times[b_k,b_{k+1})$.
\end{Remark}

\section{Weak mixing}\label{secweakm}  In this section we will show weak
mixing assuming that the roof function $f:\T^2\to\R_+$ is
piecewise $C^2$  and satisfies the von Neumann condition
(\ref{vonNeumann02}) (in the following section we will establish
the absence of partial rigidity under the same assumption).  We
recall that all rotations on tori are rigid.
\begin{Lemma}[see \cite{Iw-Le-Mu}]\label{intbyparts}
Let $h:\T\to\R$ be a piecewise absolutely continuous map with $N$
discontinuities. Suppose that $h':\T\to\R$ is of bounded variation
and $|h'(x)|\geq \theta>0$ for all $x\in\T$. Then
\[\left|\int_\T e^{2\pi i h(x)}\,dx\right|\leq\frac{N}{\pi\theta}+\frac{\var h'}{2\pi\theta^2}.\]
\end{Lemma}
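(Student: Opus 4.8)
The plan is to estimate the oscillatory integral $\int_\T e^{2\pi i h(x)}\,dx$ via integration by parts, localizing the analysis to the intervals on which $h$ is smooth. Since $h$ has $N$ discontinuities, I would write $\int_\T e^{2\pi i h(x)}\,dx = \sum_{j} \int_{I_j} e^{2\pi i h(x)}\,dx$, where the $I_j$ are the (at most $N$) intervals of continuity of $h$. On each such interval $h$ is absolutely continuous with $h'$ of bounded variation and bounded away from zero in absolute value, so I can introduce the factor $\tfrac{1}{2\pi i h'(x)}\tfrac{d}{dx}e^{2\pi i h(x)} = e^{2\pi i h(x)}$ and integrate by parts.

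The key step is the integration by parts on a single interval $I_j = (c_j, d_j)$. Writing
\[
\int_{I_j} e^{2\pi i h(x)}\,dx = \int_{I_j} \frac{1}{2\pi i\,h'(x)}\,\frac{d}{dx}\left(e^{2\pi i h(x)}\right)\,dx,
\]
I obtain a boundary term $\bigl[\tfrac{1}{2\pi i\,h'(x)}e^{2\pi i h(x)}\bigr]_{c_j}^{d_j}$ plus the integral
\[
-\int_{I_j} e^{2\pi i h(x)}\,\frac{d}{dx}\left(\frac{1}{2\pi i\,h'(x)}\right)\,dx.
\]
Since $|e^{2\pi i h(x)}|=1$ and $|h'(x)|\ge\theta$, each boundary evaluation is bounded by $\tfrac{1}{2\pi\theta}$. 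Summing the boundary contributions over the intervals gives two endpoint evaluations per interval; the total number of endpoints is $2N$ (counting the endpoints at each of the $N$ discontinuity points), so the accumulated boundary bound is $\tfrac{2N}{2\pi\theta}=\tfrac{N}{\pi\theta}$, which matches the first term in the claimed estimate.

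For the remaining integral I would bound $\bigl|\tfrac{d}{dx}(1/(2\pi i\,h'(x)))\bigr|$ and sum over the intervals. Since $1/(2\pi\,h')$ has derivative $-h''/(2\pi (h')^2)$ in absolute value, and more robustly because $h'$ may only be of bounded variation rather than $C^1$, I would control $\sum_j \int_{I_j}\bigl|d\bigl(1/(2\pi h'(x))\bigr)\bigr|$ by the total variation of the function $x\mapsto 1/(2\pi h'(x))$. Using $|h'|\ge\theta$ and the elementary fact that $\var(1/g)\le \var(g)/\theta^2$ when $|g|\ge\theta$, this total variation is at most $\tfrac{\var h'}{2\pi\theta^2}$, giving the second term. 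The main subtlety to handle carefully is that $h'$ is only assumed to be of bounded variation, so the inner integral must be interpreted as a Riemann–Stieltjes integral against $d(1/(2\pi h'))$ and the bound must be phrased in terms of total variation rather than an $L^1$ norm of a second derivative; keeping track of the jump contributions of $h'$ across the discontinuities, and verifying that they are already subsumed in $\var h'$, is the delicate bookkeeping step. Adding the two contributions yields the stated inequality. \bez
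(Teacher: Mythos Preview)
Your proposal is correct and follows essentially the same route as the paper: partition $\T$ at the $N$ discontinuities, integrate by parts on each interval via $e^{2\pi i h}=\tfrac{1}{2\pi i h'}\,de^{2\pi i h}$, bound the $2N$ boundary evaluations by $\tfrac{1}{2\pi\theta}$ each, and control the remaining Riemann--Stieltjes integral by $\tfrac{1}{2\pi}\var(1/h')\le\tfrac{1}{2\pi\theta^2}\var h'$. The paper's proof is identical in structure, and your remark about jumps of $h'$ at the discontinuity points is harmless since $\sum_j\var_{[a_j,a_{j+1}]}h'\le\var h'$ regardless.
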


\begin{proof}
Suppose that $0\leq a_1<\ldots<a_N<1$ are all discontinuities of
$h$ (we set $a_{N+1}=a_1$). Using integration by parts we obtain
\begin{eqnarray*}
\int_{a_j}^{a_{j+1}}e^{2\pi i h(x)}\,dx&=&\int_{a_j}^{a_{j+1}}
\frac{1}{2\pi i h'(x)}\,d e^{2\pi i h(x)}\\ &=&
\left[\frac{e^{2\pi ih(x)}}{2\pi i
h'(x)}\right]_{a_j^+}^{a_{j+1}^-} -\int_{a_j}^{a_{j+1}}
 e^{2\pi i h(x)}\,d\frac{1}{2\pi i h'(x)}.
\end{eqnarray*}
Moreover,
\[
\left|\int_{a_j}^{a_{j+1}}
 e^{2\pi i h(x)}\,d\frac{1}{2\pi i h'(x)}\right|
\leq\frac{1}{2\pi}\var_{[a_j,a_{j+1}]}\frac{1}{h'}\leq
\frac{1}{2\pi\theta^2}\var_{[a_j,a_{j+1}]}{h'}\] and
\[\left|\left[\frac{e^{2\pi ih(x)}}{2\pi i
h'(x)}\right]_{a_j^+}^{a_{j+1}^-}\right|\leq
\frac{1}{\pi\theta}.\]
It follows that
\[\left|\int_\T e^{2\pi i h(x)}\,dx\right|\leq\sum_{j=1}^N
\left(\frac{1}{\pi\theta}+\frac{1}{2\pi\theta^2}
\var_{[a_j,a_{j+1}]}{h'}\right)=
\frac{N}{\pi\theta}+\frac{\var h'}{2\pi\theta^2}.\]
\end{proof}

\begin{Th}\label{slabemieszanie}
Let $T:\T^2\to\T^2$, $T(x,y)=(x+\alpha,y+\beta)$ be an ergodic
rotation. Suppose that $f:\T^2\to\R_+$ is a piecewise
$C^2$--function satisfying (\ref{vonNeumann02}). Then the special
flow $T^f$ is weakly mixing.
\end{Th}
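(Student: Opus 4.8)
The plan is to verify the hypotheses of Proposition~\ref{cr11} along a rigidity sequence. Since $T$ is an ergodic rotation, the numbers $1,\alpha,\beta$ are linearly independent over $\Q$, so $T$ is uniquely ergodic and (\ref{uniqerg}) holds for every Riemann integrable function on $\T^2$. Because every torus rotation is rigid, I fix an increasing sequence $(q_n)$ with $q_n\to\infty$ and $\|q_n\alpha\|\to 0$, $\|q_n\beta\|\to 0$ (simultaneous Dirichlet approximation); this is a rigidity sequence for $T$. Assuming without loss of generality that $m:=\int_{\T^2}f_x(x,y)\,dx\,dy\neq 0$ (the case $\int_{\T^2}f_y\neq 0$ being symmetric, with the two variables interchanged), I will bound $\int_{\T^2}e^{2\pi i sf^{(q_n)}}$ for each fixed $s\neq 0$ by integrating first in $x$ and then in $y$, applying Lemma~\ref{intbyparts} to the inner integral.

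The heart of the argument is a uniform lower bound on the $x$--derivative of the Birkhoff sum. As $f$ is piecewise $C^2$, the partial derivative $f_x$ is bounded and continuous off the finitely many lines $x=a_j$, $y=b_k$, hence Riemann integrable. Therefore, by (\ref{uniqerg}) applied to $f_x$,
\[\frac{1}{q_n}\frac{\partial}{\partial x}f^{(q_n)}(x,y)=\frac{1}{q_n}\sum_{k=0}^{q_n-1}f_x(x+k\alpha,y+k\beta)\longrightarrow m\]
uniformly in $(x,y)\in\T^2$. Consequently, for all $n$ large enough one has $\bigl|\partial_x f^{(q_n)}(x,y)\bigr|\geq q_n|m|/2$ for every $(x,y)$.

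Now fix $s\neq 0$ and, for a.e.\ $y\in\T$, apply Lemma~\ref{intbyparts} to $h(x)=sf^{(q_n)}(x,y)$. For $y$ outside the finite set where some $y+k\beta$ ($0\le k<q_n$) lands on a horizontal discontinuity line, the map $x\mapsto f^{(q_n)}(x,y)$ is piecewise absolutely continuous with at most $Nq_n$ discontinuities, coming from the $N$ vertical lines translated by the $k\alpha$. Its derivative $h'$ has bounded variation with $\var h'\leq |s|q_nV$, where $V$ depends only on $\|f_{xx}\|_\infty$ and the total jump of $f_x$ across the vertical lines, while the estimate above gives $|h'|\geq\theta:=|s|q_n|m|/2$. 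Lemma~\ref{intbyparts} then yields
\[\left|\int_\T e^{2\pi i sf^{(q_n)}(x,y)}\,dx\right|\leq\frac{Nq_n}{\pi\theta}+\frac{|s|q_nV}{2\pi\theta^2}=\frac{2N}{\pi|s||m|}+\frac{2V}{\pi|s|q_n|m|^2}\leq\frac{C}{|s|}\]
for a constant $C$ independent of $s,y,n$ (the second term even tends to $0$). Integrating in $y$ preserves the bound, so $\bigl|\int_{\T^2}e^{2\pi i sf^{(q_n)}}\,d\lambda_{\T^2}\bigr|\leq C/|s|$ for all large $n$, and Proposition~\ref{cr11} gives weak mixing of $T^f$.

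I expect the main obstacle to be precisely the uniform lower bound on $\partial_x f^{(q_n)}$: it is here that the von Neumann condition (\ref{vonNeumann02}) and unique ergodicity combine to turn a nonvanishing \emph{mean} of $f_x$ into a pointwise lower bound of order $q_n$ on the derivative of the Birkhoff sum, which is exactly what makes the phase $sf^{(q_n)}$ oscillate fast enough for Lemma~\ref{intbyparts} to be effective. The counting of discontinuities and the variation estimate are then routine, once one observes that both grow only linearly in $q_n$ and are therefore dominated by $\theta$ and $\theta^2$, respectively.
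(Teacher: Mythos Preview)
Your proof is correct and follows essentially the same route as the paper: reduce to Proposition~\ref{cr11}, use unique ergodicity of the rotation to turn $\int f_x\neq 0$ into a uniform linear-in-$n$ lower bound on $\partial_x f^{(n)}$, then apply Lemma~\ref{intbyparts} to the inner $x$--integral and integrate in $y$. The only cosmetic difference is that the paper establishes the bound $\bigl|\int_{\T^2}e^{2\pi i s f^{(n)}}\bigr|\le C/|s|$ for \emph{all} $n\ge n_0$, whereas you restrict to a rigidity sequence $(q_n)$; since the proof of Proposition~\ref{cr11} only ever evaluates the hypothesis along such a sequence, this changes nothing. Your treatment of $\var h'$ is in fact slightly more careful than the paper's, as you explicitly account for the jump contributions of $f_x$ across the vertical discontinuity lines in addition to the $\|f_{xx}\|_\infty$ term.
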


\begin{proof}
Suppose that $\int_{\T^2}f_x(x,y)\,dxdy\neq 0$. The proof of the
symmetric case runs similarly. By Proposition~\ref{cr11}, it
suffices to show that there exist $C>0$ and $n_0\in\N$ such that
for every $s \neq 0$ and $n\geq n_0$ we have
$\left|\int_{\T^2}e^{2\pi i sf^{(n)}(x,y)}\,dxdy\right|\leq
C/|s|$. Since $f_x:\T^2\to\R$ is Riemann integrable and $T$ is
uniquely ergodic, $(f^{(n)})_x/n=(f_x)^{(n)}/n$ tends uniformly to
$\int_{\T^2}f_x(x,y)\,dxdy\neq 0$. Therefore there exist
$\theta>0$ and $n_0\in\N$ such that $|(f^{(n)})_x(x,y)|\geq \theta
n$ for all $(x,y)\in\T^2$ and $n\geq n_0$. Fix $n\geq n_0$ and
$y\in\T$. Since $\T\ni x\mapsto f^{(n)}(x,y)\in\R$ is a piecewise
$C^2$--function with at most $nN$ discontinuities, by
Lemma~\ref{intbyparts} applied to $f^{(n)}(\,\cdot\,,y)$,
\begin{eqnarray*}\left|\int_{\T}e^{2\pi i
sf^{(n)}(x,y)}\,dx\right|&\leq& \frac{nN}{\pi|s|\theta
n}+\frac{\var s(f^{(n)})'(\,\cdot\,,y)}{2\pi s^2\theta^2
n^2}\\&\leq&\frac{N}{\pi|s|\theta
}+\frac{\sum_{k=0}^{n-1}\|f''(\,\cdot\,,y+k\beta)\|_{C^0}}{2\pi
s\theta^2
n^2}\\
& \leq&\frac{N}{\pi|s|\theta }+\frac{\|f_{xx}\|_{C^0}}{2\pi
|s|\theta^2 n},
\end{eqnarray*}
so also
\[\left|\int_{\T^2}e^{2\pi
i sf^{(n)}(x,y)}\,dxdy\right|\leq \int_{\T}\left|\int_{\T}e^{2\pi
i sf^{(n)}(x,y)}\,dx\right|dy\leq\frac{N}{\pi|s|\theta
}+\frac{\|f_{xx}\|_{C^0}}{2\pi |s|\theta^2 n},\] which completes
the proof.
\end{proof}

\section{Absence of partial rigidity}\label{secabsrig} Let us recall that a flow
$(S_t)_{t\in\R}$ acting on a standard probability Borel space
$\xbm$ is called {\em partially rigid} if there exist $\kappa>0$
and $\R\ni r_t\to\infty$ such that $\liminf_{t\to\infty}\mu(A\cap
S_{r_t}A)\geq \kappa\mu(A)$ for each $A\in\cb$.

\begin{Th}\label{aopr}
Let $T:\T^2\to\T^2$, $T(x,y)=(x+\alpha,y+\beta)$ be an ergodic
rotation. Suppose that $f:\T^2\to\R_+$ is a piecewise
$C^1$--function satisfying (\ref{vonNeumann02}). Then the special
flow $T^f$ is not partially rigid.
\end{Th}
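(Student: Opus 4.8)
The plan is to show that partial rigidity would force the Birkhoff sums $f^{(n)}(x,y)$ to concentrate near a single value (modulo the height normalization), and then to contradict this using the stretching of $f^{(n)}$ in the direction where von Neumann's condition holds. Suppose again that $\int_{\T^2}f_x\,dx\,dy\neq0$; the symmetric case is analogous. Assume for contradiction that $T^f$ is partially rigid, so there are $\kappa>0$ and $r_t\to\infty$ with $\liminf_{t}\mu^f(A\cap T^f_{r_t}A)\geq\kappa\mu^f(A)$ for every measurable $A\subset X^f$. First I would recall the standard reduction for special flows: partial rigidity along the times $r_t$ is controlled by how well the vertical displacement $r_t-f^{(n)}(x,y)$ returns close to $0$ for a fixed base return $T^n$; more precisely, testing the partial rigidity inequality against sets of the form $A=B\times[c,d)$ and passing to a subsequence, one obtains integers $n_t\to\infty$ and a limiting measure (a "partial rigidity distribution") describing the weak limit of the push-forward of $\lambda_{\T^2}$ under $(x,y)\mapsto r_t-f^{(n_t)}(x,y)\bmod(\text{heights})$, which must have an atom of mass at least $\kappa$ at some point.

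The heart of the argument is to show that no such atom can exist, because $f^{(n)}(\cdot,y)$ stretches uniformly in the $x$ variable. As in the proof of Theorem~\ref{slabemieszanie}, unique ergodicity of $T$ and Riemann integrability of $f_x$ give $\theta>0$ and $n_0$ with $|(f^{(n)})_x(x,y)|\geq\theta n$ for all $(x,y)\in\T^2$ and $n\geq n_0$. Consequently, for each fixed $y$ the map $x\mapsto f^{(n)}(x,y)$ is piecewise strictly monotone with derivative of size at least $\theta n$, so on each of its at most $nN$ monotonicity intervals it sweeps through an interval of length proportional to $n$. This means that for any fixed target value $v$ and any window of fixed length $\eta>0$, the set of $x$ for which $f^{(n)}(x,y)$ lands in $[v-\eta,v+\eta]$ has $\lambda_\T$-measure $O(nN\cdot\eta/(\theta n))=O(N\eta/\theta)$, uniformly in $y$ and $v$. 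Integrating over $y$, the push-forward distribution of $f^{(n)}$ has no atoms of mass exceeding $O(N\eta/\theta)$ in any $\eta$-window, and letting $\eta\to0$ kills every atom; this contradicts the atom of mass $\geq\kappa$ produced by partial rigidity.

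The main obstacle I anticipate is the bookkeeping in the first step: making rigorous the passage from the abstract partial rigidity inequality on $X^f$ to a concentration (atom) statement for the scalar random variables $f^{(n_t)}(x,y)$. One must account for the fact that $r_t$ is a real time while the base return index $n_t$ and the fractional vertical part vary together, and show that partial rigidity cannot be "spread out" over many distinct base returns in a way that avoids concentration. The clean way is to fix a small measurable $A$, expand $\mu^f(A\cap T^f_{r_t}A)$ using the explicit formula for $T^f_t$, and observe that a positive lower bound survives only if the vertical-displacement distribution clusters; the uniform-stretch estimate above then shows such clustering is impossible. Once the reduction is set up correctly, the stretching estimate does all the work and requires only the piecewise $C^1$ hypothesis, matching the statement. \bez
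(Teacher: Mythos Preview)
Your overall strategy matches the paper's: reduce partial rigidity to a concentration statement for the Birkhoff sums $f^{(j)}$, then use the uniform lower bound $|(f^{(n)})_x|\geq\theta n$ to rule out concentration. The stretching estimate you give for a \emph{single} $n$ is correct and is exactly the paper's starting point.

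The gap is in your reduction step. Partial rigidity does \emph{not} yield a single return index $n_t$ and an atom for the distribution of $f^{(n_t)}$. The correct reduction (the paper quotes it as Lemma~7.1 of \cite{Fr-Le}) only produces $t_n\to\infty$ and $u>0$ with
\[
\liminf_{n\to\infty}\lambda_{\T^2}\bigl(\{(x,y):\exists_{j\in\N}\;|f^{(j)}(x,y)-t_n|<\vep\}\bigr)\geq u
\quad\text{for every }0<\vep<c.
\]
The existential quantifier over $j$ is essential: for a given $t_n$ the relevant indices $j$ range over an interval of length $\asymp t_n$ (since $cj\leq f^{(j)}\leq Cj$), so summing your single-$n$ bound $O(N\vep/\theta)$ over all such $j$ gives $O(t_n\vep)$, which is useless.

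What is actually needed---and what the paper proves as its Lemma~\ref{lemkawc1}---is that the \emph{union} over $j$ still has measure $O(\vep)$, uniformly in $t$. This requires more than the lower bound $|f'_n|\geq\theta n$: one also uses the upper bound $|f'_n|\leq\Theta n$ together with the increment bounds $c\leq f_n-f_{n-1}\leq C$. These force, on each monotonicity interval, the preimage intervals $I_{i,j}=\{x:|f_j(x)-t|<\vep\}$ for distinct $j$ to be separated by gaps of size at least $c/(2\Theta\bar{j})$, so the number of nonempty $I_{i,j}$ per monotonicity interval is controlled by the interval's length, and the total union stays $O(\vep)$. You anticipated the bookkeeping difficulty in your last paragraph, but the resolution is this separation argument, not a reduction to a single $n_t$.
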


To prove Theorem~\ref{aopr} we will need the following.

\begin{Lemma}\label{lemkawc1}
Let $(f_n)_{n\in\N}$ be a sequence of piecewise $C^1$--functions
$f_n:\T\to\R_+$ for which there exist $0<c<C$, $0<\theta<\Theta$,
$m_0\in\N$, $N\in\N$  and finite sets $D(f_n)\subset\T$ containing
all discontinuity points of $f_n$ such that
\begin{eqnarray}
\label{zalo1}& &f_{n-1}(x)+c\leq f_{n}(x)\leq f_{n-1}(x)+C\text{
for all $n\in\N$,
$x\in\T$ ($f_0\equiv 0$)},\\
\label{zalo2}& &D(f_n)\subset D(f_{n+1})\text{ and }\#D(f_n)
\leq Nn,\\
\label{zalo3}& &\theta n\leq|f'_n(x)|\leq \Theta n\text{ for all
}n\geq n_0\text{ and }x\in\T\setminus D(f_n).
\end{eqnarray}
Then for every $t\geq 2Cn_0$ and $0<\vep<c/4$ we have
\[\lambda_\T\left(\{x\in
\T:\exists_{j\in\N}\;|f_j(x)-t|<\vep\}\right)<\frac{16C}{\theta
c^2 }(Nc+\Theta)\vep.\]
\end{Lemma}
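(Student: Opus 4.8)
The plan is to estimate, for each relevant index $j$, the contribution of a single level $t$, and then to sum in $j$ in a way that is uniform in $t$. First I would record the elementary structure. By (\ref{zalo1}) the sequence $(f_j(x))_{j\ge1}$ is strictly increasing with consecutive gaps in $[c,C]$ and $f_j(x)\le jC$; since $\vep<c/4$, for each fixed $x$ at most one index $j$ can satisfy $|f_j(x)-t|<\vep$. Hence the set in question is the disjoint union $\bigsqcup_j A_j$ with $A_j=\{x:|f_j(x)-t|<\vep\}$, and $A_j\neq\emptyset$ forces $jC\ge\max_x f_j(x)>t-\vep\ge 2Cn_0-\vep$, so every relevant $j$ exceeds $n_0$ and (\ref{zalo3}) is available. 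Because $|f'_j|\ge\theta j>0$ off $D(f_j)$, the function $f_j$ is strictly monotone on each of the at most $Nj$ intervals between consecutive discontinuities, so on each such piece $P$ one has $\la_\T(A_j\cap P)\le 2\vep/(\theta j)$. Writing $r_j$ for the number of pieces of $f_j$ whose range meets $W:=(t-\vep,t+\vep)$, this gives
\[\la_\T\Big(\bigcup_j A_j\Big)\le\frac{2\vep}{\theta}\sum_{j}\frac{r_j}{j},\]
and the whole problem is reduced to bounding $\sum_j r_j/j$ by a constant independent of $t$ and of $\vep$.

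The key idea is to split the relevant pieces at the \emph{fixed} scale $c/2$. A \emph{short} piece (range $<c/2$) meeting $W$ has its whole range inside $\check W:=(t-\vep-c/2,\,t+\vep+c/2)$, an interval of length $<2c$; its endpoints are discontinuities of $f_j$ at which the value of $f_j$ lies in $\check W$. Since by (\ref{zalo1}) the value of $f_j$ at a fixed discontinuity increases by at least $c$ as $j$ increases by one, each discontinuity point is relevant for only a bounded number of indices $j$. As $\#D(f_j)\le Nj$ and each relevant $j$ satisfies $1/j\le C/(t-\vep)$ (from $f_j\le jC$), summing over $j$ contributes to $\sum_j r_j/j$ at most a constant multiple of $CN/c$; after the prefactor $2\vep/\theta$ this is the $N$-term $\mathrm{const}\cdot CN\vep/(\theta c)$ of the asserted bound.

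The delicate part is the \emph{long} pieces (range $\ge c/2$), and this is where I expect the real difficulty. A long piece meeting $W$ has range longer than $|W|=2\vep$, so its range contains one of the endpoints $u\in\{t-\vep,t+\vep\}$, producing a crossing $f_j(x)=u$ interior to a piece of length $\ge c/2$. Around such a crossing the set $\{x:|f_j(x)-u|<c/4\}$ (note the fixed scale $c/4$, not $\vep$) has $\la_\T$-measure at least $(c/4)/(\Theta j)$, because $|f'_j|\le\Theta j$ and the range of the piece covers $(u-c/4,u+c/4)$ on at least one side. Crucially these sets are pairwise disjoint: for $j\neq j'$ a common point would give $|f_j-f_{j'}|<c/2$ whereas (\ref{zalo1}) forces $|f_j-f_{j'}|\ge c$, while for the same $j$ they lie in distinct pieces. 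Summing their measures is therefore $\le\la_\T(\T)=1$, so the number of long pieces crossing a fixed level $u$ is $O(\Theta t/c^2)$, \emph{uniformly in $t$ and independent of $\vep$}. Weighting by $1/j=O(C/t)$ and summing over the two levels $u=t\pm\vep$ bounds the long part of $\sum_j r_j/j$ by a constant multiple of $C\Theta/c^2$, which after multiplication by $2\vep/\theta$ gives the $\Theta$-term $\mathrm{const}\cdot C\Theta\vep/(\theta c^2)$; together the two terms combine to the stated bound $\tfrac{16C}{\theta c^2}(Nc+\Theta)\vep$.

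The main obstacle is precisely this long-piece estimate. The naive per-level bound $r_j\le Nj$, summed over the roughly $t$ relevant indices, diverges, so one cannot afford to count pieces level by level. The entire weight of the argument lies in replacing that by a packing bound at the fixed scale $c$: the vertical $\ge c$-separation of the graphs of the $f_j$ (from (\ref{zalo1})) makes the near-$u$ neighborhoods disjoint across $j$, while the upper derivative bound $\Theta j$ keeps each such neighborhood of size $\gtrsim c^2/(\Theta t)$. Together these force the number of long crossings to be $O(t)$ with a $\vep$-independent constant, which is exactly what is needed so that, after the per-piece factor $2\vep/(\theta j)$, the final estimate stays proportional to $\vep$.
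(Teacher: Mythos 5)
Your proof is correct in its essential content, and it organizes the counting differently from the paper. The paper's proof confines the relevant indices to the window $t/(2C)<j<2t/c$, sets $\bar j=\max J$, and partitions $\T$ by $D(f_{\bar j})$ into $k\le 2Nt/c$ cells $I_i$ on which \emph{every} relevant $f_j$ is simultaneously monotone. Inside a cell the hitting sets $I_{i,j}=\overline{\{x\in I_i:|f_j(x)-t|<\vep\}}$ are intervals of length at most $4C\vep/(\theta t)$, and for $j\neq j'$ any two of them are separated by at least $c^2/(4t\Theta)$ (this is (\ref{zalo1}) combined with the upper bound in (\ref{zalo3})); hence a cell contains at most $1+4t\Theta|I_i|/c^2$ nonempty hitting sets, and summing the ``$1$''s over the $k$ cells produces the $Nc$-term while summing the gap terms produces the $\Theta$-term. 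Your short/long dichotomy at range-scale $c/2$ repackages exactly these two mechanisms around each $f_j$'s own pieces: your short pieces, counted through the values of $f_j$ at points of $D(f_{\bar j})$ (at most two indices $j$ per point, by (\ref{zalo1})), play the role of the paper's one free interval per cell, and your disjoint $c/4$-neighbourhoods of level crossings are the same vertical-separation packing as the paper's gap intervals $H_l$. Neither route is more general; yours avoids having to regard $D(f_{\bar j})$ as a common partition of monotonicity for all relevant $j$ simultaneously (you use it only as a marker set for the short pieces), while the paper's single-partition version keeps all constants explicit in one pass.

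One quantitative step does not hold as written: for the long pieces you first bound the \emph{number} of crossings of a fixed level $u$ (over all $j$) by $8\Theta t/c^2$ and then weight every crossing by the uniform bound $1/j\le 2C/t$; this produces $64C\Theta\vep/(\theta c^2)$, four times the $\Theta$-term of the statement, so your closing claim that the two contributions ``combine to the stated bound'' is not justified by the arithmetic you give. The repair stays entirely inside your packing argument: disjointness yields the \emph{weighted} inequality directly, namely
\[
\sum_{j} m_{j,u}\,\frac{c/4}{\Theta j}\le \la_\T(\T)=1,
\qquad\text{hence}\qquad
\sum_j\frac{m_{j,u}}{j}\le\frac{4\Theta}{c},
\]
where $m_{j,u}$ denotes the number of long pieces of $f_j$ crossing the level $u$. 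Multiplying by the per-piece factor $2\vep/\theta$ and summing over the two levels $u=t\pm\vep$ bounds the long contribution by $16\Theta\vep/(\theta c)\le 16C\Theta\vep/(\theta c^2)$, which together with your short-piece bound $16CN\vep/(\theta c)=16C(Nc)\vep/(\theta c^2)$ gives the constant of the lemma (in fact slightly better, since $c<C$).
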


\begin{proof}
Fix $t\geq 2Cn_0$ and $0<\vep<c/4$. Notice that, by (\ref{zalo1}),
$jc\leq f_j\leq jC$ for all $j\geq 0$. Let $J$ stand for the set
of all natural $j$ such that $|f_j(x)-t|<\vep$ for some $x\in\T$.
Then for such $j$ and $x$ we have $t+\vep>f_j(x)\geq cj\;\;\text{
and }\;\; t-\vep<f_j(x)\leq Cj$, whence
\begin{equation}\label{szaj}
t/(2C)\leq(t-\vep)/C<j<(t+\vep)/c\leq 2t/c
\end{equation}
for any $j\in J$; in particular, $J$ is finite and $j\in J$
implies \[ n_0\leq \frac{t}{2C}<j.\]

Let $\bar{j}=\max J$. Set $k:=\#D(f_{\bar{j}})\leq
N\bar{j}<2Nt/c$. The elements of $D(f_{\bar{j}})$ partition $\T$
into subintervals $I_1,\ldots,I_{k}$. Notice that for every $j\in
J$ the function $f_j$ is of class $C^1$ and strictly monotone
(because of (\ref{zalo2}) and (\ref{zalo3})) on the interval
$I_i$, $i=1,\ldots,k$.

Fix $1\leq i\leq k$. For every $j\in J$ let
$I_{i,j}=\overline{\{x\in I_i:|f_j(x)-t|<\vep\}}$. Since $f_j$ is
monotone on $I_i$, $I_{i,j}$ is an interval although it can be
empty. If $I_{i,j}=[z_1,z_2]$ is not empty then, by (\ref{zalo3})
and (\ref{szaj}),
\begin{equation}\label{malesza}
\theta j|I_{i,j}|\leq|(f_j)_-(z_2)-(f_j)_+(z_1)|\leq{2\vep}
\leq\frac{4C\vep j}{t}.
\end{equation}
Now suppose that $x\in I_{i,j}$ and $y\in I_{i,j'}$ with $j\neq
j'$. Since $x,y$ are in the same interval of continuity of $f_j$,
by (\ref{zalo3}) and (\ref{zalo1}), it follows that
\begin{equation}\label{wieksza}
\begin{aligned}
\Theta \bar{j}|y-x|&\geq\Theta j|y-x|\geq|f_j(y)-f_j(x)|\\& \geq
|f_j(y)-f_{j'}(y)|-|f_{j'}(y)-t|-|f_j(x)-t| \geq c-2\vep\geq
\frac{c}{2}.
\end{aligned}
\end{equation}
In particular, there is no overlap between $I_{i,j}$ and
$I_{i,j'}$.

Let $K_i=\{j\in J:I_{i,j}\neq\emptyset\}$ and suppose that
$s=\#K_{i}\geq 1$. Then there exist $s-1$ pairwise disjoint
subintervals $H_{l}\subset I_{i}$, $l=1,\ldots,s-1$ that are
disjoint from intervals $I_{i,j}$, $j\in K_i$ and fill up the
space between those intervals. In view of (\ref{wieksza}) and
(\ref{szaj}) we have $|H_l|\geq c/(2\bar{j}\Theta)\geq
c^2/(4t\Theta)$ for $l=1,\ldots,s-1$. Therefore, by
(\ref{malesza}) and (\ref{wieksza}), we obtain
\begin{eqnarray*}
\sum_{j\in K_i} |I_{i,j}|&\leq
&s\frac{4C\vep}{t\theta}=\frac{4C\vep}{t\theta}+\frac{16C\vep\Theta
}{c^2\theta}(s-1)\frac{c^2}{4t\Theta}\\& \leq&
\frac{4C\vep}{t\theta}+\frac{16C\vep\Theta
}{c^2\theta}\sum_{l=1}^{s-1}|H_l|\leq
\frac{4C\vep}{t\theta}+\frac{16C\vep\Theta }{c^2\theta}|I_i|.
\end{eqnarray*}
Since\[B:=\{x\in\T:\exists_{j\in\N}\;|f_j(x)-t|<\vep\}\subset
\bigcup_{i=1}^{k}\bigcup_{j\in K_i }I_{i,j},\] it follows that
\begin{eqnarray*}\lambda_\T(B)&\leq&\sum_{i=1}^{k}\sum_{j\in K_i
}|I_{i,j}|\leq\sum_{i=1}^{k}\left(\frac{4C\vep}{t\theta}+\frac{16C\vep\Theta
}{c^2\theta}|I_i|\right)\\&=&\frac{4C\vep
k}{t\theta}+\frac{16C\vep\Theta }{c^2\theta}\sum_{i=1}^{k}|I_i|
=\frac{4C\vep k }{t\theta}+\frac{16C\vep\Theta
}{c^2\theta}\\&\leq&\frac{8C\vep N }{c\theta}+\frac{16C\vep\Theta
}{c^2\theta}\leq\frac{16C\vep }{c^2\theta}(Nc+\Theta).
\end{eqnarray*}
\end{proof}

\begin{proof}[Proof of Theorem~\ref{aopr}]
Suppose that  $\int_{\T^2}f_x(x,y)\,dxdy\neq 0$. The proof of the
symmetric case runs similarly. Let $c$, $C$ be positive numbers
such that $0<c\leq f(x,y)\leq C$ for every $(x,y)\in\T^2$. Assume,
contrary to our claim, that $T^f$ is partially rigid. By Lemma~7.1
in \cite{Fr-Le}, there exist $(t_n)_{n\in\N}$, $t_n\to+\infty$ and
$0<u\leq 1$ such that for every $0<\vep<c$ we have
\begin{equation}\label{lemsztywwz}
\liminf_{n\to\infty}\lambda_{\T^2}\left(\{(x,y)\in
\T^2:\exists_{j\in\N}\;|f^{(j)}(x,y)-t_n|<\vep\}\right)\geq u.
\end{equation}
Let $0\leq a_1<\ldots<a_N<1$ and $0\leq b_1<\ldots<b_M<1$ be
points determining the lines of points of discontinuity for $f$.
Since $f_x:\T^2\to\R$ is Riemann integrable, by the unique
ergodicity of $T$, there exist $0<\theta<\Theta$ and $m_0\in\N$
such that $m\theta\leq |(f_x)^{(m)}(x,y)|\leq m\Theta$ for all
$(x,y)\in\T^2$ and $m\geq m_0$.

Take $0<\vep<\frac{c^2\theta}{32C(Nc+\Theta)}u$.  Fix $y\in\T$.
For every  $m\in\N$ let us consider the map $\T\ni x\mapsto
f^{(m)}(x,y)\in\R_+$ and set $D(f^{(m)}(\,\cdot\,
,y))=\{a_k-j\alpha:1\leq k\leq N,0\leq j<m\}$. Then
$f^{(m)}(\,\cdot\, ,y)$ is piecewise $C^1$ and its discontinuity
points are contained in $D(f^{(m)}(\,\cdot\, ,y))$. Moreover,
$D(f^{(m)}(\,\cdot\, ,y))\subset D(f^{(m+1)}(\,\cdot\, ,y))$,
$\#D(f^{(m)}(\,\cdot\, ,y))\leq Nm$ and
\[f^{(m)}(x,y)=f^{(m-1)}(x,y)+f\circ T^{m-1}(x,y)
\in f^{(m-1)}(x,y)+[c,C].\] Now an application of
Lemma~\ref{lemkawc1} to the sequence $(f^{(m)}(\,\cdot\,
,y))_{m\in\N}$ gives
\[\lambda_\T\left(\{x\in
\T:\exists_{j\in\N}\;|f^{(j)}(x,y)-t_n|<\vep\}\right)<\frac{16C}{\theta
c^2 }(Nc+\Theta)\vep<u/2\] whenever $t_n>2Cm_0$. By Fubini's
Theorem,
\begin{align*}\lambda_{\T^2}&\left(\{(x,y)\in
\T^2:\exists_{j\in\N}\;|f^{(j)}(x,y)-t_n|<\vep\}\right)\\&=\int_{\T}\lambda_\T\left(\{x\in
\T:\exists_{j\in\N}\;|f^{(j)}(x,y)-t_n|<\vep\}\right)\,dy<u/2
\end{align*}
whenever $t_n>2Cm_0$, contrary to (\ref{lemsztywwz}).
\end{proof}

\section{Weak Ratner's property}\label{secweakrat}
In this section we introduce and discuss consequences of weak
Ratner's property. Weak Ratner's property will be one more
weakening of the classical Ratner condition from \cite{Ra}. The
present idea has already been used in case $P$ is finite in
\cite{Fr-Le} and \cite{Fr-Le-Le}.

\begin{Definition} \label{defrat}
Let $(X,d)$ be a $\sigma$--compact metric space, $\mathcal{B}$ be
the $\sigma$--algebra of Borel subsets of $X$, $\mu$ a probability
Borel measure on $(X,d)$. Assume that $(S_t)_{t\in\R}$ is a flow
on $(X,{\cb},\mu)$. Let $P\subset\R\setminus\{0\}$ be a compact
subset and $t_0\in\R\setminus\{0\}$. The flow $(S_t)_{t\in\R}$ is
said to have {\em the property $\rat(t_0,P)$} if for every
$\vep>0$ and $N\in\N$ there exist $\kappa=\kappa(\vep)>0$,
$\delta=\delta(\vep,N)>0$ and a subset $Z=Z(\vep,N)\in\mathcal{B}$
with $\mu(Z)>1-\vep$  such that if $x,x'\in Z$, $x'$ is not in the
orbit of $x$ and $d(x,x')<\delta$, then there are $M=M(x,x')\geq
N$, $L=L(x,x')\geq N$ such that $L/M\geq\kappa$ and there exists
$\rho=\rho(x,x')\in P$ such that
\[\frac{\# \{n\in\Z\cap[M,M+L]:d(S_{nt_0}(x),S_{nt_0+\rho}(x'))<\vep\}}{L}>1-\vep.\]
Moreover, we say that $(S_t)_{t\in\R}$ has {\em the property
$\rat(P)$} if the set of $s\in\R$ such that the flow
$(S_t)_{t\in\R}$ has the $\rat(s,P)$--property is uncountable.
Flows with the latter property are said to have {\em weak Ratner's
property}.
\end{Definition}

\begin{Remark}
Note that the original Ratner notion of $\mathcal{H}_p$--flow,
introduced in \cite{Ra}, is equivalent to  requiring that a flow
has $\rat(p,\{-p,p\})$--property.

The  notion we introduce is different from the concept of Ratner's
property presented by Witte in \cite{Wi}. The main difference is
that Witte admits compact subsets in the centralizer of the flow
$(S_t)_{t\in\R}$ as the set of displacements. In our approach this
set is included in the flow. It should be emphasized that Witte
has used his notion to prove certain rigidity phenomena of some
translations on homogeneous space but not to study the structure
of joinings which is one of our aims.
\end{Remark}

The following result is a simple consequence of Birkhoff's Ergodic
Theorem.

\begin{Lemma}\label{uwagaerg}
Let $T:\xbm\to\xbm$ be an ergodic automorphism and
$A\in\mathcal{B}$. For every $\vep>0$, $\delta>0$ and $\kappa>0$
there exist $N=N(\vep,\delta,\kappa)\in\N$ and
$X(\vep,\delta,\kappa)\in\mathcal{B}$ with
$\mu(X(\vep,\delta,\kappa))>1-\delta$ such that for every
$M,L\in\N$ with $L\geq N$ and $L/M\geq\kappa$ we have
\[\left|\frac{1}{L}\sum_{n=M}^{M+L}\chi_A(T^nx)-\mu(A)\right|<\vep\text{
for all }x\in X(\vep,\delta,\kappa).\;\;\;\Box\]
\end{Lemma}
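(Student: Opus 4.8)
The plan is to deduce this directly from Birkhoff's Ergodic Theorem together with Egorov's Theorem, the only real point being to pass from the averages $\frac{1}{k}\sum_{n=0}^{k-1}\chi_A(T^nx)$ that Birkhoff controls to averages over an arbitrary window $[M,M+L]$. For $k\in\N$ write $S_k(x)=\sum_{n=0}^{k-1}\chi_A(T^nx)$. Since $T$ is ergodic, Birkhoff's Ergodic Theorem gives $S_k(x)/k\to\mu(A)$ for $\mu$--almost every $x$, and by Egorov's Theorem this convergence is uniform off a set of small measure. Thus, fixing an auxiliary $\eta>0$ to be specified at the end in terms of $\vep$ and $\kappa$, there are $N_0\in\N$ and a set $X=X(\vep,\delta,\kappa)\in\mathcal{B}$ with $\mu(X)>1-\delta$ such that
\[\left|\frac{S_k(x)}{k}-\mu(A)\right|<\eta\quad\text{for all }x\in X\text{ and all }k\geq N_0.\]

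Next I would express the window average as a difference of two initial averages,
\[\frac{1}{L}\sum_{n=M}^{M+L}\chi_A(T^nx)=\frac{S_{M+L+1}(x)-S_M(x)}{L}=\frac{M+L+1}{L}\cdot\frac{S_{M+L+1}(x)}{M+L+1}-\frac{M}{L}\cdot\frac{S_M(x)}{M},\]
and estimate each piece using the uniform bound above. Requiring $L\geq N\geq N_0$ forces $M+L+1\geq N_0$, so the first Cesàro factor is within $\eta$ of $\mu(A)$; as the two coefficients differ by $\frac{L+1}{L}$, the main term equals $\frac{L+1}{L}\mu(A)$, which deviates from $\mu(A)$ by at most $1/L\leq 1/N$. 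Here the hypothesis $L/M\geq\kappa$ enters decisively: it yields $M/L\leq 1/\kappa$ and hence $\frac{M+L+1}{L}\leq\frac{1}{\kappa}+2$, so the Egorov error contributes at most $(\tfrac{2}{\kappa}+2)\eta$ to the total. Choosing $\eta$ so small that $(\tfrac{2}{\kappa}+2)\eta<\vep/2$ and then $N\geq N_0$ so large that $1/N<\vep/2$ gives the claimed estimate whenever $M\geq N_0$, and records the dependences $N=N(\vep,\delta,\kappa)$ and $X=X(\vep,\delta,\kappa)$ asserted in the statement.

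The one remaining case is $M<N_0$, which can occur when $\kappa$ is large (since then $M\leq L/\kappa$ may be small), but it is harmless: there $S_M(x)\leq M<N_0$, so both the subtracted term $S_M(x)/L<N_0/L\leq N_0/N$ and the corresponding main contribution $\frac{M}{L}\mu(A)\leq N_0/N$ are negligible once $N$ is large, and the estimate goes through after enlarging $N$ if necessary. I expect no genuine obstacle: the entire content is the reduction of window averages to initial--segment averages, together with the observation that the ratio constraint $L/M\geq\kappa$ is precisely what keeps the coefficients of the two Cesàro averages bounded. Without it, a short window placed far out (for instance $M$ huge with $L$ fixed) could not be controlled by Birkhoff's Theorem at all, so this constraint is exactly where the hypothesis is used.
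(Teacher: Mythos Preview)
Your argument is correct, and it is exactly the standard way to fill in what the paper leaves implicit: the paper gives no proof at all, stating only that the lemma ``is a simple consequence of Birkhoff's Ergodic Theorem'' and placing a $\Box$ after the statement. Your use of Egorov together with the decomposition of the window sum $\sum_{n=M}^{M+L}$ as a difference of two initial Birkhoff sums, with the ratio hypothesis $L/M\ge\kappa$ bounding the coefficients, is precisely the intended derivation.
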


\begin{Remark}
If the set $P\subset\R\setminus\{0\}$ is finite then using Luzin's
theorem and Lemma~\ref{uwagaerg} one can easily show that the
$\rat(s,P)$--property does not depend on the choice of the metric
$d$ on $X$ compatible with $\mathcal{B}$. We have been unable to
decide whether for $P$ infinite (and compact) the
$\rat(s,P)$--property depends on the choice of the metric; it is
very likely that it does. This is why we are forced to put one
more assumption on $d$, see~(\ref{zalmetric}) below (see also
Remark~\ref{emmanuel} below).
\end{Remark}
We will constantly assume that $(S_t)_{t\in\R}$ satisfies the
following ``almost continuity'' condition
\begin{align}\label{zalmetric}
\begin{split}
&\text{for every $\vep>0$ there exists $X(\vep)\in\mathcal{B}$
with $\mu(X(\vep))>1-\vep$  such that}\\&\text{for every $\vep'>0$
there exists $\vep_1>0$ such
that}\\&\text{$d(S_{t}x,S_{t'}x)<\vep'$ for all $x\in X(\vep)$ and
$t,t'\in[-\vep_1,\vep_1]$.}
\end{split}
\end{align}
Notice that if $(S_t)_{t\in\R}$ is a special flow acting on a
space $Y^f$ equipped with a metric of the form (\ref{prodmet})
then (\ref{zalmetric}) holds.

We intend to prove a version of famous Ratner's theorem which
describes  the structure of ergodic joinings between a system
satisfying weak Ratner's property and an arbitrary one, see
Theorem~\ref{prpr}.

Assume that $\cs=(S_t)_{t\in\R}$ and $\ct=(T_t)_{t\in\R}$ are
ergodic flows acting on $\xbm$ and $\ycn$  respectively. By a {\em
joining} one means any $(S_t\times T_t)_{t\in\R}$--invariant
probability measure $\rho$ on $(X\times Y,\cb\ot\cc)$ with the
marginals $\mu$ and $\nu$ respectively. We then write $\rho\in
J(\cs,\ct)$. The set of ergodic joinings is denoted by
$J^e(\cs,\ct)$.

An essential step of the proof of Theorem~\ref{prpr} will be based
on the following result.

\begin{Lemma}\label{rozerg}
Let $(S_t)_{t\in\R}$ and $(T_t)_{t\in\R}$ be  ergodic flows acting
on $\xbm$ and $\ycn$  respectively and let $\rho\in J(\cs,\ct)\cap
J^e(S_1,T_1)$. Assume that $(S_t)_{t\in\R}$ and $(X,d)$ satisfy
(\ref{zalmetric}). Let $P\subset\R$ be a non-empty compact set.
Suppose that $A\in\mathcal{B}$ with $\mu(\partial A)=0$ and
$B\in\mathcal{C}$. Then for every $\vep,\delta,\kappa>0$ there
exist $N=N(\vep,\delta,\kappa)\in\N$ and
$\Theta(\vep,\delta,\kappa)\in\mathcal{B}\otimes\mathcal{C}$ with
$\rho(\Theta(\vep,\delta,\kappa))>1-\delta$ such that for every
$M,L\in\N$ with $L\geq N$ and $L/M\geq\kappa$ we have
\[\left|\frac{1}{L}\sum_{j=M}^{M+L}\chi_{S_{-p}A\times B}
(S_jx,T_jy)-\rho(S_{-p}A\times B)\right|<\vep\]
for all $(x,y)\in \Theta(\vep,\delta,\kappa)$ and $p\in P$.
\end{Lemma}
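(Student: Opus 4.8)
The plan is to deduce the claim from the ordinary ergodic averaging statement of Lemma~\ref{uwagaerg}, applied to the single transformation $S_1\times T_1$ on $(X\times Y,\cb\ot\cc,\rho)$. This transformation is ergodic because $\rho\in J^e(S_1,T_1)$, so for each \emph{fixed} $p$ the set $S_{-p}A\times B\in\cb\ot\cc$ is subject to Lemma~\ref{uwagaerg}, giving the desired estimate on a large set as soon as $L\geq N$ and $L/M\geq\kappa$. The entire difficulty is to make this estimate hold \emph{simultaneously} for all $p$ in the infinite (though compact) set $P$, with one common $N$ and one common set $\Theta$; a naive union over $p\in P$ is useless.

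The remedy is compactness of $P$ together with an equicontinuity of $p\mapsto S_{-p}A$. First I would record the key reduction: since each $S_{-p_i}$ is $\mu$-preserving and $\rho$ has first marginal $\mu$,
\[
S_{-p}A\triangle S_{-p_i}A=S_{-p_i}\bigl(S_{-(p-p_i)}A\triangle A\bigr),\qquad
\mu\bigl(S_{-p}A\triangle S_{-p_i}A\bigr)=\mu\bigl(S_{-s}A\triangle A\bigr),
\]
with $s=p-p_i$. Hence both the change in $\rho(S_{-p}A\times B)$ and the change in the Birkhoff averages, as $p$ moves within a small window, are governed by $\omega(\eta):=\sup_{|s|<\eta}\mu(S_{-s}A\triangle A)$. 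The main point, and the place where $\mu(\partial A)=0$ and the almost-continuity condition (\ref{zalmetric}) enter, is to produce, for any prescribed tolerance, a number $\eta>0$ and a \emph{fixed} measurable set $E$ with $\mu(E)$ small such that $S_{-s}A\triangle A\subseteq E$ for all $|s|<\eta$. Indeed, off the exceptional set $X\setminus X(\vep)$ from (\ref{zalmetric}) the whole orbit segment $\{S_ux:|u|\le\eta\}$ stays within an arbitrarily small distance $\vep'$ of $x$, so $\chi_A$ is constant along this segment unless $x$ lies within $\vep'$ of $\partial A$; since $\mu(\partial A)=0$, the $\vep'$-neighbourhood of $\partial A$ together with $X\setminus X(\vep)$ furnishes such a set $E$. (Working with a fixed containing set $E$, rather than the uncountable union $\bigcup_{|s|<\eta}(S_{-s}A\triangle A)$, is also what avoids a measurability issue.)

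With $\eta$ and $E$ fixed, I would choose a finite $\eta$-net $p_1,\dots,p_r$ of $P$ and apply Lemma~\ref{uwagaerg} (for $S_1\times T_1$ and the measure $\rho$) simultaneously to the finite family $\{S_{-p_i}A\times B:1\le i\le r\}\cup\{(S_{-p_i}E)\times Y:1\le i\le r\}$, with accuracy a fixed fraction of $\vep$. Taking $\Theta$ to be the intersection of the finitely many resulting good sets keeps $\rho(\Theta)>1-\delta$ and fixes one common $N$. For an arbitrary $p\in P$ I then pick the nearest $p_i$ and bound by the triangle inequality: the average of $\chi_{S_{-p}A\times B}$ differs from that of $\chi_{S_{-p_i}A\times B}$ by at most the average of $\chi_{(S_{-p_i}E)\times Y}$, controlled through Lemma~\ref{uwagaerg} by $\rho((S_{-p_i}E)\times Y)=\mu(E)$; the latter average is close to $\rho(S_{-p_i}A\times B)$, which in turn is within $\mu(S_{-s}A\triangle A)\le\mu(E)$ of $\rho(S_{-p}A\times B)$. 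Summing these small errors yields the claim for every $p\in P$.

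I expect the main obstacle to be the equicontinuity step, i.e.\ producing the single small set $E$ containing all of the symmetric differences $S_{-s}A\triangle A$, $|s|<\eta$. This is exactly the place where one cannot work with $p$ fixed, and where the metric hypothesis (\ref{zalmetric}) together with $\mu(\partial A)=0$ are indispensable; the remaining steps (the finite net and the triangle-inequality bookkeeping) are routine.
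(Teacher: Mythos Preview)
Your approach is correct and follows the same overall strategy as the paper: use compactness of $P$ to reduce to a finite net, extract the needed equicontinuity of $p\mapsto S_{-p}A$ from condition~(\ref{zalmetric}) together with $\mu(\partial A)=0$, and then invoke Lemma~\ref{uwagaerg} for the ergodic automorphism $S_1\times T_1$ on finitely many sets. The packaging is slightly different: the paper sandwiches $S_{-p}A\times B$ between the inner and outer sets
\[
I(\vep_1/2,q)=\bigcap_{|t|\le\vep_1/2}(S_{-t-q}A\times B)\subset S_{-p}A\times B\subset\bigcup_{|t|\le\vep_1/2}(S_{-t-q}A\times B)=U(\vep_1/2,q)
\]
for $q$ in a finite $\vep_1/2$--net $Q$ of $P$, and applies Lemma~\ref{uwagaerg} to the $2\,\#Q$ sets $I(\vep_1/2,q),\,U(\vep_1/2,q)$. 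You instead bound the symmetric differences $S_{-p}A\triangle S_{-p_i}A\subset S_{-p_i}E$ by a single fixed set and apply Lemma~\ref{uwagaerg} to the $2r$ sets $S_{-p_i}A\times B$ and $(S_{-p_i}E)\times Y$. The bookkeeping you outline (triangle inequality, $\rho((S_{-p_i}E)\times Y)=\mu(E)$ via the marginal condition) is correct, and the two routes yield the same estimate.

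One small technical point: in a general metric space the implication ``$d(S_sx,x)<\vep'$ and $\chi_A(S_sx)\neq\chi_A(x)$ imply $d(x,\partial A)<\vep'$'' can fail, so $V_{\vep'}(\partial A)$ is not quite the right choice for $E$. What is true for $x\in X(\vep)$ and $|s|\le\eta$ is that $\chi_A(S_sx)\neq\chi_A(x)$ forces $x\in V_{\vep'}(A)\cap V_{\vep'}(A^c)$ (one inclusion from $x$, the other from $S_sx$). Hence
\[
E:=\bigl(X\setminus X(\vep)\bigr)\cup\bigl(V_{\vep'}(A)\cap V_{\vep'}(A^c)\bigr)
\]
does the job, and since $\bigcap_{\vep'>0}\bigl(V_{\vep'}(A)\cap V_{\vep'}(A^c)\bigr)=\overline{A}\cap\overline{A^c}=\partial A$ has $\mu$--measure zero, $\mu(E)$ is as small as desired. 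With this $E$ your argument goes through unchanged; the paper effectively makes the same choice, since its bound $\mu\bigl(\bigcup_{|t|\le\vep_1}S_{-t}A\bigr)-\mu(A)<\vep/2$ is exactly $\mu(V_{\vep'}(A))-\mu(A)$ on the good set $X(\vep/4)$.
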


\begin{Remark}\label{emmanuel}
If in Lemma~\ref{rozerg} we take $\rho=\mu\otimes\nu$, $B=Y$ and
$\kappa=1$ then for every $\vep,\delta>0$ there exist
$N(\vep,\delta)\in\N$ and $\Theta(\vep,\delta)\in\mathcal{B}$ with
$\mu(\Theta(\vep,\delta))>1-\delta$ such that for every $L\geq
N(\vep,\delta)$  we have
\begin{equation}\label{sprzecz}
\sup_{p\in
P}\left|\frac{1}{L}\sum_{j=0}^{L}\chi_{A}(S_{j+p}x)-\mu(A)\right|
<\vep\text{
for all }x\in \Theta(\vep,\delta).
\end{equation}
 As it was pointed to
us by E. Lesigne, if we let $(S_t)$ be an arbitrary flow, and
$A\in\mathcal{B}$ be also arbitrary then (\ref{sprzecz}) fails to
be true for $P=[0,1]$. This is one more reason to justify our
additional assumption~(\ref{zalmetric}) on $(S_t)$ and $d$.
\end{Remark}

\begin{proof}[Proof of Lemma~\ref{rozerg}]
Fix $\vep,\delta,\kappa>0$. Let $V_{\epsilon}(A)=\{z\in
X:d(z,A)<\epsilon\}$. Since $\mu(\partial A)=0$, there exists
$\vep'>0$ such that $\mu(V_{\vep'}(A))-\mu(A)<\vep/4$,
\[\mu(A)-\mu((V_{\vep'}(A^c))^c)=\mu(V_{\vep'}(A^c))-\mu(A^c)<
\vep/4.\] By (\ref{zalmetric}), there exists $\vep_1>0$ such that
$d(S_{t}x,S_{t'}x)<\vep'$ for all $x\in X(\vep/4)$ and
$t,t'\in[-\vep_1,\vep_1]$. It follows that

\begin{eqnarray*}
\lefteqn{\mu\left(\bigcup_{t\in[-\vep_1,\vep_1]}S_{-t}A\right)}
\\&\leq&
\mu\left(\bigcup_{t\in[-\vep_1,\vep_1]}S_{-t}A\cap
X(\vep/4)\right)+\mu\left(\bigcup_{t\in[-\vep_1,\vep_1]}S_{-t}
A\cap X(\vep/4)^c\right)\\&\leq&
\mu\left(V_{\vep'}(A)\right)+\mu\left(
X(\vep/4)^c\right)<\mu(A)+\vep/2.
\end{eqnarray*}
Similarly
$\mu\left(\bigcup_{t\in[-\vep_1,\vep_1]}S_{-t}A^c\right)\leq
\mu(A^c)+\vep/2$, and  hence
\[\mu\left(\bigcap_{t\in[-\vep_1,\vep_1]}S_{-t}A\right)=1-
\mu\left(\bigcup_{t\in[-\vep_1,\vep_1]}S_{-t}A^c\right)\geq
1-(\mu(A^c)+\vep/2)=\mu(A)-\vep/2.\] For every $\epsilon>0$ and
$p\in \R$ set
\[I(\epsilon,p)=\bigcap_{t\in[-\epsilon,
\epsilon]}(S_{-t-p}A\times B)\text{ and
}U(\epsilon,p)=\bigcup_{t\in[-\epsilon, \epsilon]}(S_{-t-p}A\times
B).\] It follows that for every  $p\in \R$ we have
\begin{eqnarray*}\lefteqn{\rho\left(U(\vep_1,p)\right)-\rho\left(S_{-p}A\times B\right)}\\&=&
\rho\left(\bigcup_{t\in[-\vep_1,\vep_1]}(S_{-t-p}A\times
B)\setminus S_{-p}A\times B\right)
\\&=&\rho\left(\left(\bigcup_{t\in[-\vep_1,\vep_1]}S_{-t-p}A
\setminus
S_{-p}A\right)\times
B\right)\leq\mu\left(\bigcup_{t\in[-\vep_1,\vep_1]}S_{-t-p}A
\setminus
S_{-p}A\right)\\&=&\mu\left((\bigcup_{t\in[-\vep_1,\vep_1]}S_{-t}A)\setminus
A\right)<\vep/2
\end{eqnarray*}
and similarly
\begin{eqnarray*}
\rho\left(S_{-p}A\times
B\right)-\rho\left(I(\vep_1,p)\right)<\vep/2.
\end{eqnarray*}
Let $Q\subset P$ be a finite set such that $P\subset
Q+[-\vep_1/2,\vep_1/2]$. By Lemma~\ref{uwagaerg} applied to
$T_1\times S_1:(X\times Y,\rho)\to (X\times Y,\rho)$ and sets
$U(\vep_1/2,q)$, $I(\vep_1/2,q)$ for $q\in Q$, there exist
$N\in\N$ and $\Theta\subset\mathcal{B}\otimes\mathcal{C}$ with
$\rho(\Theta)>1-\delta$ such that  for every $M,L\in\N$ with
$L\geq N$ and $L/M\geq\kappa$ we have
\[\left|\frac{1}{L}\sum_{j=M}^{M+L}\chi_{U(\vep_1/2,q)}(S_jx,T_jy)-\rho(U(\vep_1/2,q))\right|<\vep/2\] and
\[\left|\frac{1}{L}\sum_{j=M}^{M+L}\chi_{I(\vep_1/2,q)}(S_jx,T_jy)-\rho(I(\vep_1/2,q))\right|<\vep/2\]
for all $(x,y)\in \Theta$ and $q\in Q$. Take $p\in P$ and choose
$q\in Q$ such that $p\in q+[-\vep_1/2,\vep_1/2]$. Then
\[I(\vep_1,p)\subset I(\vep_1/2,q)\subset S_{-p}A\times B\subset
U(\vep_1/2,q)\subset U(\vep_1,p).\] Thus
\begin{eqnarray*}\lefteqn{\frac{1}{L}\sum_{j=M}^{M+L}
\chi_{S_{-p}A\times
B}(S_jx,T_jy)\leq
\frac{1}{L}\sum_{j=M}^{M+L}\chi_{U(\vep_1/2,q)}(S_jx,T_jy)}\\
&<&\rho(U(\vep_1/2,q))+\vep/2\leq\rho(U(\vep_1,p))+\vep/2
<\rho(S_{-p}A\times B)+\vep
\end{eqnarray*}
and
\begin{eqnarray*}\lefteqn{\frac{1}{L}
\sum_{j=M}^{M+L}\chi_{S_{-p}A\times
B}(S_jx,T_jy)\geq
\frac{1}{L}\sum_{j=M}^{M+L}\chi_{I(\vep_1/2,q)}(S_jx,T_jy)}\\
&>&\rho(I(\vep_1/2,q))-\vep/2\geq\rho(I(\vep_1,p))-\vep/2>\rho(S_{-p}A\times
B)-\vep,
\end{eqnarray*}
which completes the proof.
\end{proof}

\begin{Lemma}\label{miaraoto}
For every $A\in\mathcal{B}$ there exists a set $\Upsilon\subset
(0,+\infty)$ such that $(0,+\infty)\setminus\Upsilon$ is countable
and $\mu(\partial V_{\epsilon}(A))=0$ for all
$\epsilon\in\Upsilon$.
\end{Lemma}

\begin{proof}
Note that $\partial V_{\epsilon}(A)\subset\{x\in
X:d(x,A)=\epsilon\}$ and $\{\{x\in
X:d(x,A)=\epsilon\}:\epsilon>0\}$  is a family of closed pairwise
disjoint sets. Since $\mu$ is finite, the set of all $\epsilon>0$
such that $\mu(\{x\in X:d(x,A)=\epsilon\})>0$ is countable. It
follows that $\mu(\partial V_{\epsilon}(A))>0$ for at most
countably many $\epsilon>0$.
\end{proof}
\begin{Remark}\label{istgesty} Since $(X,d)$ is a Polish space,
by the regularity
of $\mu$ and Lemma~\ref{miaraoto}, we can find $\{A_i:i\in\N\}$ a
dense family  in $(\mathcal{B},\mu)$ such that $\mu(\partial
A_i)=0$ for all $i\in\N$.
\end{Remark}
\begin{Lemma}[see the proof of Theorem~3 in
\cite{Ra}]\label{lemrat}
Let $(S_t)_{t\in\R}$ and $(T_t)_{t\in\R}$ be  ergodic flows acting
on $\xbm$ and $\ycn$ respectively and let $\rho\in J^e(\cs,\ct)$.
Suppose that there exists $U\in\mathcal{B}\otimes\mathcal{C}$ with
$\rho(U)>0$ and $\delta>0$ such that   if $(x,y)\in U$, $(x',y)\in
U$ then either $x$ and $x'$ are in the same orbit or
$d(x,x')\geq\delta$. Then $\rho$ is a finite extension of
$\nu$.\bez
\end{Lemma}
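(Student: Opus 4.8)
The plan is to pass to the disintegration of $\rho$ over its factor $\ycn$ and to prove that the fibre measures are finitely supported. Write $\rho=\int_Y\rho_y\otimes\delta_y\,d\nu(y)$, where $\rho_y$ is the conditional (probability) measure on the fibre $X\times\{y\}$, identified with a measure on $X$; fixing a good version we have the equivariance $(S_t)_*\rho_y=\rho_{T_ty}$ for every $t\in\R$ and $\nu$-a.e.\ $y$. Since each $S_t$ is a homeomorphism it preserves the atomic/continuous decomposition and the individual atom masses, so $m(x,y):=\rho_y(\{x\})$ is a flow-invariant function on the atomic part of $(X\times Y,\rho)$ and the total continuous mass of $\rho_y$ is a $T$-invariant function of $y$. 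By ergodicity of $\rho$ (hence of $\nu$) both are a.e.\ constant. Consequently, \emph{once $\rho_y$ is known to be purely atomic}, all its atoms carry the same mass $p_0>0$, there are exactly $1/p_0$ of them, and $\rho$ is a $(1/p_0)$-to-one, i.e.\ finite, extension of $\nu$. Thus the whole problem reduces to excluding a continuous part of $\rho_y$.

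This is where the separation hypothesis enters, and only in the following local form. For the (positive-$\nu$-measure) set of $y$ with $\rho_y(U_y)>0$, where $U_y=\{x:(x,y)\in U\}$, any two points of $U_y$ lying in distinct $S$-orbits are at distance $\geq\delta$; here $\mathcal{O}_S(x)$ denotes the $S$-orbit of $x$. Hence if $X$ is covered by balls of radius $<\delta/2$, then $U_y$ meets each such ball inside a single orbit. In other words, inside $U$ the fibre measure is carried, ball by ball, by individual $S$-orbits. Note that this uses $\delta$-separation \emph{without moving any points}, which is essential because the flow need not preserve distances.

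To exclude a continuous part I would run the following argument. Suppose the continuous part $\rho^{\mathrm c}=\int_Y\rho_y^{\mathrm c}\otimes\delta_y\,d\nu(y)$ is nonzero; it is $(S_t\times T_t)$-invariant and absolutely continuous with respect to $\rho$. Because $\rho$ is ergodic and $\rho(U)>0$, the flow-saturation $\bigcup_{t}(S_t\times T_t)^{-1}U$ has full $\rho$-measure, hence full $\rho^{\mathrm c}$-measure; by invariance of $\rho^{\mathrm c}$ this forces $\rho^{\mathrm c}(U)>0$, so $\rho_y^{\mathrm c}(U_y)>0$ for a positive-measure set of $y$. By the local single-orbit structure of the previous paragraph, for such $y$ some ball $B$ satisfies $\rho_y^{\mathrm c}(U_y\cap B)>0$ with all this mass on one orbit $\mathcal{O}_S(x)$; thus the invariant set of orbits carrying \emph{positive} continuous mass has positive $\rho$-measure. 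On a single orbit a finite continuous measure has a canonical reference point, e.g.\ the median $\eta(x,y)$ of the continuous mass along $\mathcal{O}_S(x)$, and equivariance gives $\eta(S_tx,T_ty)=S_t\eta(x,y)$. The signed orbit-displacement $F(x,y)$ defined by $x=S_{F(x,y)}\eta(x,y)$ is then a flow-invariant function, and it is non-constant on this positive-measure invariant set precisely because the mass along the orbit is continuous. This contradicts ergodicity of $\rho$, so $\rho^{\mathrm c}=0$ and $\rho_y$ is purely atomic, which by the first paragraph completes the proof.

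The step I expect to be the genuine obstacle is exactly this exclusion of the continuous part, because the $\delta$-separation is available only on the fixed, non-invariant set $U$: one cannot simply transport a close, different-orbit pair into $U$ to contradict the separation, since the flow may expand distances and orbits may make near-returns. The resolution above is to use $\delta$-separation purely statically inside $U$ (to manufacture positive orbit-masses out of continuous fibre mass) and to supply all the dynamics through ergodicity and invariance (the full saturation of $U$, the equivariant reference point $\eta$, and the invariant displacement $F$), rather than through any naive transport. The main technical burden is then routine but not trivial: the measurable selection of $\eta$, the invariance and non-constancy of $F$ on the continuous support, and the deduction $\rho^{\mathrm c}(U)>0$ from invariance and the full saturation of $U$.
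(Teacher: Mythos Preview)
The paper gives no proof of this lemma; it simply cites Ratner's Theorem~3 and places a box. Your outline follows the standard route (disintegrate over $Y$, exclude a continuous part of $\rho_y$ via a nonconstant invariant function) and is essentially correct, but two points should be tightened.

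First, the step ``the saturation of $U$ has full $\rho^{\mathrm c}$-measure; by invariance of $\rho^{\mathrm c}$ this forces $\rho^{\mathrm c}(U)>0$'' is not valid as written: invariance only gives $\rho^{\mathrm c}((S_t\times T_t)^{-1}U)=\rho^{\mathrm c}(U)$ for each $t$, and an uncountable union of null sets can have full measure. The clean argument is the one you almost wrote in your first paragraph: $\rho^{\mathrm c}$ is invariant and $\ll\rho$, and $\rho$ is ergodic, so $\rho^{\mathrm c}=c\rho$ for some constant $c$; but then $\rho_y^{\mathrm c}=c\rho_y$ for a.e.\ $y$, which is impossible unless $c\in\{0,1\}$ (a nonzero multiple of a measure with atoms cannot be its continuous part). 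Thus either $\rho_y$ is purely atomic for a.e.\ $y$, or $\rho^{\mathrm c}=\rho$ and in particular $\rho^{\mathrm c}(U)=\rho(U)>0$. This also streamlines the logic: you only need to kill the purely continuous case.

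Second, your median/displacement construction tacitly uses that the $(S_t)$-action is free, so that $F(x,y)$ is well defined and varies along the orbit. This is not implied by ergodicity alone: if $(S_t)$ is a circle rotation, every pair $x,x'$ lies on the same orbit, the hypothesis is vacuous, and the conclusion is false (e.g.\ $X=Y=\T$, $S_t x=x+t$, $T_t y=y+\alpha t$ with $\alpha$ irrational, $\rho=\lambda_{\T^2}$). In the paper's application (Theorem~\ref{prpr}) $(S_t)$ is weakly mixing, hence aperiodic, so this is harmless there; you should simply add that hypothesis explicitly.

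With those two fixes the argument goes through; the measurability of $\eta$ and $F$ that you flag is routine once one parametrizes each positive-$\rho_y$-mass orbit by the distribution function $t\mapsto\rho_y(\{S_s x:s\le t\})$.
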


\begin{Th}\label{prpr}
Let $(X,d)$ be a $\sigma$--compact metric space, $\mathcal{B}$ be
the $\sigma$--algebra of Borel subsets of $X$, $\mu$ a probability
 Borel measure on $(X,d)$. Let $(S_t)_{t\in\R}$ be a weakly mixing flow
on the space $(X,{\cb},\mu)$ that satisfies the
$\rat(P)$--property where $P\subset\R\setminus\{0\}$ is a nonempty
compact set. Assume that $(S_t)_{t\in\R}$ and $(X,d)$ satisfy
(\ref{zalmetric}).

Let $(T_t)_{t\in\R}$ be an ergodic flow on $\ycn$ and let $\rho$
be an ergodic joining of $(S_t)_{t\in\R}$ and $(T_t)_{t\in\R}$.
Then either $\rho=\mu\otimes\nu$, or $\rho$ is a finite extension
of $\nu$.
\end{Th}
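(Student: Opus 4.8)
The plan is to follow Ratner's original dichotomy argument from \cite{Ra}, adapted to the weaker property $\rat(P)$. Write $\cs=(S_t)$, $\ct=(T_t)$ and let $U_t=S_t\times T_t$ denote the product flow, which preserves the ergodic joining $\rho$. The first reduction is to a situation in which Lemma~\ref{rozerg} applies, i.e.\ to $\rho\in J^e(S_1,T_1)$. Since $\rho$ is $U_t$-ergodic, any $U_t$-invariant nonconstant function has flow-spectral mass in $\tfrac1t\Z$; as the point spectrum of $(U_t)$ is a countable subgroup of $\R$, the set of $t$ for which $U_t$ is not ergodic is countable. Because $\cs$ has the $\rat(P)$-property, the set of $s$ with the $\rat(s,P)$-property is uncountable, so I can fix $t_0$ for which $\rat(t_0,P)$ holds \emph{and} $U_{t_0}$ is ergodic. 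Rescaling time by $t_0$ (which preserves weak mixing and (\ref{zalmetric}), and replaces $P$ by the compact set $P/t_0\subset\R\setminus\{0\}$), I may assume $t_0=1$ and $\rho\in J^e(S_1,T_1)$.

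Now the dichotomy. If $\rho$ is a finite extension of $\nu$ we are done, so assume it is not. By the contrapositive of Lemma~\ref{lemrat}, for every $U\in\cb\ot\cc$ with $\rho(U)>0$ and every $\delta>0$ there are $(x,y),(x',y)\in U$ with $x'$ off the $\cs$-orbit of $x$ and $d(x,x')<\delta$; these are exactly the admissible inputs of the $\rat(1,P)$-property. The goal is to feed such pairs into $\rat(1,P)$ and upgrade the resulting approximate coupling of $S$-orbits into an exact symmetry of $\rho$. Using Remark~\ref{istgesty}, fix a countable family $\{A_i\}$ dense in $(\cb,\mu)$ with $\mu(\partial A_i)=0$, and a countable family $\{B_j\}$ dense in $\cc$.

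The crux is the following construction. Let $\vep_k\downarrow0$. For each $k$ put $\kappa=\kappa(\vep_k)$ (from $\rat(1,P)$), and apply Lemma~\ref{rozerg} to the shifted sets $S_{-p}A_i\times B_j$ uniformly over $p\in P$, together with Lemma~\ref{uwagaerg} (for the ergodic system $(X\times Y,\rho,S_1\times T_1)$) applied to the sets $A_i\times B_j$ and to the collars $V_{\vep_k}(\partial A_i)\times Y$, to obtain a set $\Theta_k$ with $\rho(\Theta_k)$ close to $1$ and a threshold $N_k$ on which all these Birkhoff averages are $\vep_k$-good whenever $L\ge N_k$ and $L/M\ge\kappa$. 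Let $Z_k=Z(\vep_k,N_k)$ be the Ratner set and put $U_k=(Z_k\times Y)\cap\Theta_k$, of positive measure. Applying the previous paragraph with $U=U_k$ and $\delta=\delta(\vep_k,N_k)$ gives a pair $(x_k,y_k),(x_k',y_k)\in U_k$, and then $\rat(1,P)$ gives $M_k,L_k\ge N_k$ with $L_k/M_k\ge\kappa$ and a displacement $p_k\in P$ with $d(S_nx_k,S_{n+p_k}x_k')<\vep_k$ for more than $(1-\vep_k)L_k$ of the $n\in[M_k,M_k+L_k]$. Comparing $\frac1{L_k}\sum_n\chi_{A_i\times B_j}(S_nx_k,T_ny_k)$ with $\frac1{L_k}\sum_n\chi_{S_{-p_k}A_i\times B_j}(S_nx_k',T_ny_k)$: the shared $y_k$ makes the $\ct$-factors identical, the coupling makes $\chi_{A_i}(S_nx_k)=\chi_{A_i}(S_{n+p_k}x_k')$ outside the collar $V_{\vep_k}(\partial A_i)$ and the exceptional $n$'s, while Lemma~\ref{uwagaerg} identifies the first average with $\rho(A_i\times B_j)$ and Lemma~\ref{rozerg} the second with $\rho(S_{-p_k}A_i\times B_j)$. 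Hence $|\rho(A_i\times B_j)-\rho(S_{-p_k}A_i\times B_j)|\le C\vep_k+\mu(V_{\vep_k}(\partial A_i))\to0$ for every $i,j$. By compactness of $P$, pass to a subsequence with $p_k\to p\in P$, so $p\neq0$; since $\mu(\partial A_i)=0$, condition~(\ref{zalmetric}) makes $t\mapsto\rho(S_{-t}A_i\times B_j)$ continuous, so in the limit $\rho(A_i\times B_j)=\rho(S_{-p}A_i\times B_j)$ for all $i,j$, and density gives $(S_p\times\operatorname{Id})_*\rho=\rho$ with $p\neq0$. I expect the bookkeeping here—coordinating the Ratner interval $[M_k,M_k+L_k]$ with the ergodic thresholds, controlling the boundary collars, and extracting a single limiting displacement from $P$—to be the main obstacle; the uniformity over the compact set $P$ furnished by Lemma~\ref{rozerg} is precisely what makes it go through.

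Finally, weak mixing converts this symmetry into independence. Fix $F\in L^2(\mu)$ with $\int F\,d\mu=0$ and $G\in L^2(\nu)$, and set $\tilde F(x,y)=F(x)$, $\tilde G(x,y)=G(y)$, so that $\phi(t):=\langle U_t\tilde F,\tilde G\rangle_{L^2(\rho)}=\int F(S_tx)\overline{G(y)}\,d\rho$. The invariance $(S_p\times\operatorname{Id})_*\rho=\rho$ yields $\phi(t)=\phi(t+p)$, so by the spectral theorem the cross spectral measure $\sigma_{\tilde F,\tilde G}$ of $(U_t)$ is concentrated on the countable set $\tfrac1p\Z$. On the other hand $\langle U_t\tilde F,\tilde F\rangle=\int F(S_tx)\overline{F(x)}\,d\mu$, so $\sigma_{\tilde F}$ is the spectral measure of $F$ for $\cs$, which is continuous because $\cs$ is weakly mixing and $\int F\,d\mu=0$; as $\sigma_{\tilde F,\tilde G}\ll\sigma_{\tilde F}$, we get $\sigma_{\tilde F,\tilde G}=0$, whence $\int F(x)\overline{G(y)}\,d\rho=\phi(0)=0$. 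Since $F\perp\mathbf 1$ and $G$ are arbitrary, $\rho=\mu\otimes\nu$, which completes the dichotomy.
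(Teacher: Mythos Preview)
Your proof is correct, but it is organised along a genuinely different axis from the paper's.

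\textbf{The paper's argument.} One assumes $\rho\neq\mu\otimes\nu$ and shows that $\rho$ is a finite extension of $\nu$. Compactness of $P$ is used \emph{up front}: since, for any fixed $t\neq 0$, invariance of $\rho$ under $S_{-t}\times\mathrm{Id}$ together with ergodicity of $S_t$ would force $\rho=\mu\otimes\nu$, one gets a uniform $\vep>0$ and finitely many test sets $A_i,B_j$ such that for every $p\in P$ some $|\rho(S_{-p}A_i\times B_j)-\rho(A_i\times B_j)|\ge\vep$. Then one builds a single large set $U_1$ of good points (via Lemmas~\ref{uwagaerg} and~\ref{rozerg}), checks directly that the $\rat(1,P)$ estimate for any putative close pair $(x,y),(x',y)\in U_1$ would contradict this uniform lower bound, and concludes with Lemma~\ref{lemrat}. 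No sequence, no limit, no spectral theory.

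\textbf{Your argument.} You assume $\rho$ is \emph{not} a finite extension and produce $\rho=\mu\otimes\nu$. The contrapositive of Lemma~\ref{lemrat} supplies, for each scale $\vep_k$, a close non-orbit pair inside a good set; $\rat(1,P)$ then yields displacements $p_k\in P$ with $\rho\approx(S_{p_k}\times\mathrm{Id})_*\rho$. Compactness of $P$ is used \emph{at the end} to extract a limit $p\in P$ with exact invariance, and weak mixing enters via a clean spectral step: the cross spectral measure $\sigma_{\tilde F,\tilde G}$ is supported on $\tfrac{2\pi}{p}\Z$ but $\sigma_{\tilde F,\tilde G}\ll\sigma_{\tilde F}$ is nonatomic, hence zero.

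\textbf{What each buys.} The paper's route is shorter and avoids both the diagonal extraction and any explicit spectral reasoning; weak mixing is used only as ``$S_t$ ergodic for $t\neq 0$''. Your route is more structural: it actually exhibits an off-diagonal symmetry $(S_p\times\mathrm{Id})_*\rho=\rho$ before collapsing it, and makes transparent why weak mixing (continuous spectrum on $L^2_0$) is the right hypothesis.

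Two small remarks. First, continuity of $t\mapsto\rho(S_{-t}A_i\times B_j)$ follows simply from $\mu(S_{-t}A_i\triangle S_{-t'}A_i)\to 0$ (strong continuity of the Koopman group); you need not invoke (\ref{zalmetric}) there. Second, at each stage $k$ Lemmas~\ref{uwagaerg} and~\ref{rozerg} control only finitely many $(A_i,B_j)$, so your ``for every $i,j$'' should be read as ``for each fixed $i,j$, once $k$ is large enough''; this suffices for the limit.
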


\begin{proof}
 Suppose that  $\rho\in
J^e(\cs,\ct)$ and $\rho\neq\mu\otimes\nu$. Since the flow
$(S_t\times T_t)_{t\in\R}$ is ergodic on $(X\times Y,\rho)$, we
can find $t_0\neq 0$ such that the automorphism $S_{t_0}\times
T_{t_0}:(X\times Y,\rho)\to(X\times Y,\rho)$ is ergodic and the
flow $(S_{t})_{t\in\R}$  has the $\rat(t_0,P)$--property. To
simplify notation we assume that $t_0=1$.

By Remark~\ref{istgesty}, there exist two families
$\{A_i:i\in\N\}$ and $\{B_i:i\in \N\}$ dense in
$(\mathcal{B},\mu)$ and $(\mathcal{C},\nu)$ respectively  such
that $\mu(\partial A_i)=0$ for all $i\in\N$. Let us consider the
map
\[\R\ni t\mapsto\varrho(t):=
\sum_{i,j=1}^{\infty}\frac{1}{2^{i+j}}|
\rho(S_{-t}A_i\times B_j)-\rho(A_i\times B_j)|\in\R.\]
Since
\[|\varrho(t)-\varrho(t')|\leq
\sum_{i,j=1}^{\infty}\frac{1}{2^{i+j}}|
\rho(S_{-t}A_i\times B_j)-\rho(S_{-t'}A_i\times B_j)|
\leq\sum_{i=1}^{\infty}\frac{1}{2^{i}}\mu(S_{-t}A_i\triangle
S_{-t'}A_i)\] and $\R\ni t\mapsto S_t\in Aut\xbm$ is a continuous
representation, the function $\varrho$ is continuous. Notice that
$\varrho(t)>0$ for $t\neq 0$. Indeed, if $\varrho(t)=0$ then
$\rho(S_{-t}A_i\times B_j)=\rho(A_i\times B_j)$ for all
$i,j\in\N$, and hence $\rho(S_{-t}A\times B)=\rho(A\times B)$ for
all $A\in\mathcal{B}$, $B\in\mathcal{C}$. By the ergodicity of
$S_{t}$, we obtain $\rho=\mu\otimes\nu$.

Since $P\subset\R\setminus\{0\}$ is compact, there exists $\vep>0$
such that $\varrho(p)\geq\vep$ for $p\in P$. Let $M$ be a natural
number such that $\sum_{i,j>M}1/2^{i+j}<\vep/2$. Since
\[\sum_{i,j=1}^{M}\frac{1}{2^{i+j}}|\rho(S_{-p}A_i\times B_j)-
\rho(A_i\times B_j)|\geq \vep/2\text{ for all }p\in P,\]
we have
\begin{equation}\label{mini}
\forall_{p\in P}\exists_{1\leq i,j\leq M}\,|\rho(S_{-p}A_i\times
B_j)-\rho(A_i\times B_j)|\geq\vep>0.
\end{equation}
Since $\mu(\partial(A_i))=0$, by Lemma~\ref{miaraoto}, we can
choose $0<\vep_1<\vep/8$ such that
\[\mu(V_{\vep_1}(A_i)\setminus
A_i)<\vep/2\text{  and }\mu(\partial V_{\vep_1}(A_i))=0\] for
$1\leq i\leq M$. It follows that
\begin{align}\label{szaep1}
|\rho(A_i\times B_j)-\rho(V_{\vep_1}(A_i)\times B_j)|<\vep/2,\\
|\rho(S_{-t}A_i\times B_j)-\rho(S_{-t}V_{\vep_1}(A_i)\times
B_j)|<\vep/2\nonumber
\end{align}
for all $1\leq i,j\leq M$ and $t\in\R$.

Let $\kappa:=\kappa(\vep_1)(>0)$. By Lemma~\ref{uwagaerg} applied
to the sets $V_{\vep_1}(A_i)\times B_j$ and the automorphism
$S_1\times T_1$, and Lemma~\ref{rozerg} applied to the pairs of
sets $A_i,B_j$, $i,j=1,\ldots,M$, there exist a measurable set
$U\subset X\times Y$ with $\rho(U)>3/4$ and $N\in\N$ such that if
$(x,y)\in U$, $p\in P$, $1\leq i,j\leq M$, $l\geq N$ and $l/m\geq
\kappa$ then
\begin{equation}\label{some2}
\left|\frac{1}{l}\sum_{k=m}^{m+l}\chi_{V_{\vep_1}(A_i)\times
B_j}(S_{k}x,T_{k}y)-\rho(V_{\vep_1}(A_i)\times
B_j)\right|<\frac{\vep}{8},
\end{equation}
\begin{equation}\label{some3}
\left|\frac{1}{l}\sum_{k=m}^{m+l}\chi_{S_{-p}A_i\times
B_j}(S_{k}x,T_{k}y)-\rho(S_{-p}A_i\times
B_j)\right|<\frac{\vep}{8}
\end{equation}
and similar inequalities hold for $A_i\times B_j$ for
(\ref{some2}) and $S_{-p}V_{\vep_1}(A_i)\times B_j$ for
(\ref{some3}).

 Next, by the property $\rat(1,P)$, we obtain relevant
$\delta=\delta(\vep_1,N)>0$ and $Z=Z(\vep_1,N)\in\mathcal{B}$,
$\mu(Z)>1-\vep_1$.

Now assume that $(x,y)\in U$, $(x',y)\in U$, $x,x'\in Z$ and $x'$
is not in the orbit of $x$. We claim that $d(x,x')\geq \delta$.
Suppose that, on the contrary, $d(x,x')<\delta$. Then, by the
property $\rat(1,P)$, there exist $M=M(x,x')$, $L=L(x,x')\geq N$
with $L/M\geq\kappa$ and $p=p(x,x')\in P$ such that
$(\#K_p)/L>1-\vep_1$, where
\[K_p=\{n\in\Z\cap[M,M+L]:d(S_{n}(x),S_{n+p}(x'))<\vep_1\}.\]
From (\ref{mini}), there exist $1\leq i,j\leq M$ such that
\begin{equation}\label{mini1}
|\rho(S_{-p}A_i\times B_j)-\rho(A_i\times B_j)|\geq\vep>0.
\end{equation}
If $k\in K_p$ and $S_{k+p}x'\in A_i$, then $S_{k}x\in
V_{\vep_1}(A_i)$. Hence
\begin{equation}\label{kicha1}
 \begin{aligned}
\frac{1}{L}\sum_{k=M}^{M+L}&\chi_{S_{-p}A_i\times
B_j}(S_{k}x',T_{k}y)\\
&\leq \frac{\#(\Z\cap[M,M+L]\setminus K_p)}{L}+\frac{1}{L}\sum_{k\in K_p}\chi_{A_i\times B_j}(S_{k+p}x',T_{k}y)\\
& \leq
\vep/8+\frac{1}{L}\sum_{k=M}^{M+L}\chi_{V_{\vep_1}(A_i)\times
B_j}(S_{k}x,T_{k}y).
\end{aligned}
\end{equation}
Now from (\ref{some3}), (\ref{kicha1}), (\ref{some2}) and
(\ref{szaep1}) it follows that
\begin{eqnarray*}
\rho(S_{-p}A_i\times B_j) &\leq &
\frac{1}{L}\sum_{k=M}^{M+L}\chi_{S_{-p}A_i\times
B_j}(S_{k}x',T_{k}y)+\vep/8\\
& \leq &
\vep/4+\frac{1}{L}\sum_{k=M}^{M+L}\chi_{V_{\vep_1}(A_i)\times
B_j}(S_{k}x,T_{k}y)\\
& < & \vep/2+\rho(V_{\vep_1}(A_i)\times B_j)< \vep+\rho(A_i\times
B_j).
\end{eqnarray*}
Applying similar arguments we get
\[\rho(A_i\times B_j)<
\vep+\rho(S_{-p}A_i\times B_j).\] Consequently,
\[|\rho(A_i\times B_j)-\rho(S_{-p}A_i\times B_j)|<\vep,\]
contrary to (\ref{mini1}).

In summary, we have found a measurable set
$U_1=U\cap(Z(\vep_1,N)\times Y)$ and $\delta(\vep_1,N)>0$ such
that $\rho(U_1)>3/4-\vep_1>1/2$
 and if $(x,y)\in U_1$,  $(x',y)\in U_1$ then either $x$ and
$x'$ are in the same orbit or $d(x,x')\geq\delta(\vep_1,N)$. Now
an application of Lemma~\ref{lemrat} completes the proof.
\end{proof}

\section{Weak Ratner's property for special flows}\label{secwrspf}
In this section we present techniques that will help us to prove
the weak Ratner property for special flows built over isometries.
The following is a general version of Lemma~5.2 in \cite{Fr-Le}.
We omit its proof since it is showed as in \cite{Fr-Le}.

\begin{Prop}\label{lemfundsp}
Let $(X,d)$ be a compact metric space, $\mathcal{B}$  the
$\sigma$--algebra of Borel subsets of $X$ and let $\mu$ be a
probability Borel measure on $(X,d)$. Assume that
$T:(X,\mu)\to(X,\mu)$ is an ergodic isometry and $f:X\to\R$ is a
bounded positive measurable function which is bounded away from
zero. Let $P\subset\R\setminus\{0\}$ be a nonempty compact subset.
Assume that for every $\vep>0$ and $N\in\N$ there exist
$\kappa=\kappa(\vep)>0$, $0<\delta=\delta(\vep,N)<\vep$ and
$Z=Z(\vep,N)\in\mathcal{B}$, $\mu(Z)>1-\vep$ such that if $x,y\in
Z$, $0<d(x,y)<\delta$, then there are natural numbers
$M=M(x,y)\geq N$, $L=L(x,y)\geq N$ such that $L/M\geq\kappa$ and
there exists $p=p(x,y)\in P$ such that
\[\frac{1}{L}\#\left\{M\leq n< M+L:|f^{(n)}(x)-f^{(n)}(y)-p|<\vep\right\}>1-\vep.\]
Suppose that $\gamma\in\R$ is a positive number such that the
$\gamma$--time automorphism $T^f_\gamma:X^f\to X^f$ is ergodic.
Then the special flow $T^f$ has the
$\rat(\gamma,P)$--property.\bez
\end{Prop}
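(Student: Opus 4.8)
The plan is to transfer the hypothesis, which controls the base Birkhoff sums $f^{(n)}$, into the orbit statement required by the $\rat(\gamma,P)$--property, using in an essential way that $T$ is an isometry (so $d(T^mx,T^my)=d(x,y)$ for every $m$) and that control of \emph{consecutive} good Birkhoff sums forces the individual roof values $f(T^ax)$ and $f(T^ay)$ to be close. Write $0<c_0\le f\le\|f\|_\infty$ and $\bar f=\int_Xf\,d\mu$, and fix $\vep>0$, $N\in\N$. First I would fix auxiliary parameters $\vep_1=\vep_1(\vep)$ small and $N_1=N_1(\vep,N)$ large, apply the hypothesis with $(\vep_1,N_1)$ to get $\kappa_1=\kappa(\vep_1)$, $\delta_1=\delta(\vep_1,N_1)<\vep_1$ and $Z_1$ with $\mu(Z_1)>1-\vep_1$, and set $\kappa=\kappa_1/2$, $\delta=\delta_1$ together with
\[Z=\{(x,s)\in X^f:x\in Z_1\}\cap X(\vep_1,\vep_1,\kappa),\]
where $X(\vep_1,\vep_1,\kappa)$ is the Birkhoff--good set from Lemma~\ref{uwagaerg} for the ergodic automorphism $T^f_\gamma$ and the near--roof set $R=\{(x,s):s<4\vep_1\text{ or }s>f(x)-4\vep_1\}$ (of measure $O(\vep_1/c_0)$), after a further intersection ensuring $f^{(m)}(x)/m\approx\bar f$. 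A short computation gives $\mu^f(Z)>1-\vep$. If $w=(x,s),w'=(y,s')\in Z$, $d^f(w,w')<\delta$ and $w'$ lies off the orbit of $w$, then $y\neq T^mx$ for all $m$, so $0<d(x,y)<\delta_1<\vep_1$ and $|s-s'|<\delta$.

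Next I would apply the hypothesis to the ordered pair $(y,x)$ — the order being chosen precisely so that the displacement lands in $P$ and not in $-P$ — obtaining $M_0,L_0\ge N_1$ with $L_0/M_0\ge\kappa_1$ and $p=p(y,x)\in P$ for which the event $|f^{(n)}(x)-f^{(n)}(y)+p|<\vep_1$ has density exceeding $1-\vep_1$ over $n\in[M_0,M_0+L_0)$; I set $\rho=p\in P$. Writing $S_{j\gamma}w=(T^{a}x,u)$ with $u=s+j\gamma-f^{(a)}(x)$, the pointwise core is the claim that if both $a$ and $a+1$ are good and $u$ lies at distance more than $4\vep_1$ from $0$ and from $f(T^ax)$, then the vertical drift $u'-u=(s'-s)+\rho+\big(f^{(a)}(x)-f^{(a)}(y)\big)$ has modulus $<2\vep_1$, while goodness of $a$ and $a+1$ gives $|f(T^ax)-f(T^ay)|<2\vep_1$; hence $S_{j\gamma+\rho}w'$ lands on the \emph{same} floor $a$ over $y$, namely $(T^ay,u')$ with $|u'-u|<2\vep_1$. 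As $T$ is an isometry, the base parts are $d(T^ax,T^ay)=d(x,y)<\vep_1$ apart, so $d^f(S_{j\gamma}w,S_{j\gamma+\rho}w')<3\vep_1<\vep$.

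It then remains to count. Passing from the floor window $[M_0,M_0+L_0)$ to a grid window $[M,M+L]$ through the monotone correspondence $j\mapsto a(j)$ (of average slope $\bar f/\gamma$, with each floor hit by at most $\|f\|_\infty/\gamma+1$ grid points) produces $M,L\ge N$ with $L/M\ge\kappa$. The floor--good density $>1-\vep_1$ transfers to a $j$--density of good floors, up to a bounded factor depending only on $f$ and $\gamma$, and the residual bad set $\{j:S_{j\gamma}w\in R\}$ has $j$--density close to $\mu^f(R)=O(\vep_1/c_0)$ by the defining property of $Z$ (this is where ergodicity of $T^f_\gamma$ is used). Taking $\vep_1$ small and $N_1$ large forces both losses below $\vep$, so the density of $j\in[M,M+L]$ with $d^f(S_{j\gamma}w,S_{j\gamma+\rho}w')<\vep$ exceeds $1-\vep$, which is the desired $\rat(\gamma,P)$--estimate.

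The hard part, and the reason $T$ must be an isometry and the hypothesis must bound the Birkhoff sums themselves, is the floor--matching: since $f$ is merely measurable, two nearby base points can have completely different roof values, and one floor mismatch separates the base coordinates by a fixed amount. What rescues the estimate is the elementary fact that two consecutive good indices force $|f(T^ax)-f(T^ay)|<2\vep_1$, so mismatches arise only at the sparse toggling indices (bounded by the hypothesis) and near the roof (bounded via Lemma~\ref{uwagaerg} and the ergodicity of $T^f_\gamma$); compactness of $P$ is what keeps the margin $\vep_1$ uniform in the displacement $\rho$.
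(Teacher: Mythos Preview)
The paper does not give a proof of this proposition: it states that the result is a general version of Lemma~5.2 in \cite{Fr-Le} and that the proof is the same as there. Your sketch is a faithful reconstruction of that argument and is essentially correct; the key ingredients---the isometry of $T$ to keep base distances fixed, the use of two consecutive good Birkhoff-sum indices to force $|f(T^ax)-f(T^ay)|<2\vep_1$ and hence identical floor numbers, the application of Lemma~\ref{uwagaerg} to the ergodic time-$\gamma$ map to control the near-roof strip, and the passage from the floor window $[M_0,M_0+L_0)$ to the arithmetic progression $\{M,\ldots,M+L\}$ via the average slope $\bar f/\gamma$---are exactly the ones used in \cite{Fr-Le}. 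One small bookkeeping point worth making explicit: when you transfer the bad-floor density to a bad-$j$ density you should also note that each floor is visited by at \emph{least} $c_0/\gamma$ grid points (not only at most $\|f\|_\infty/\gamma+1$), so that $L$ is comparable to $L_0$ from below; otherwise the inequality $L/M\ge\kappa$ and the density bound could in principle degrade. With that remark your argument goes through.
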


\begin{Definition}
Let $0<a<b$. A sequence $(x_n)_{n\geq 0}$ taking values in
$[-R,R]\cap\Z$ ($R>0$) is called $a$--{\em sparse} if there exists
an increasing sequence $(k_m)_{m\geq 0}$, $k_0=0$, of natural
numbers such that
\begin{itemize}
\item[(i)] $x_n\neq 0$ with $n\geq 1$ if and only if $n=k_m$ for some $m\geq 1$;
\item[(ii)] $k_{m+1}-k_m\geq a$ for all $m\geq 1$;
\end{itemize}
If additionally
\begin{itemize}
\item[(iii)] $k_{m+1}-k_m\leq b$ for all $m\geq 0$.
\end{itemize}
then $(x_n)_{n\geq 0}$ is $(a,b)$--{\em sparse}.
\end{Definition}

\begin{Remark}\label{remsumsp}
If $(x_n)_{n\geq 0}$ is $a$--sparse then
$\left|\sum_{k=0}^{n-1}x_k\right|\leq R(1+n/a)$.
\end{Remark}

Let $T:X\to X$ be an isometry of a metric space $(X,d)$. Let
$f:X\to\R$ be a Borel function  and let $H=\{h_1,\ldots,h_s\}$,
$s\geq 3$, a collection of real numbers. Assume that
\begin{equation}\label{naha}
h_1,\ldots,h_{s-1}\text{ are linearly independent  over }\Q\text{
and  }h_{s-1}=h_s.
\end{equation}
Let $N_j:X\times X\to\Z$, $j=1,\ldots,s+1$, and $b:X\times X\to\R$
be Borel functions such that for some constants $R,B>0$
\begin{equation}
|N_j(x,y)|\leq R,\;j=1,\ldots,s+1\;\text{ and }|b(x,y)|\leq
B\text{ whenever }d(x,y)\leq 1.
\end{equation}
 Moreover, suppose that there exist positive constants $C_0$,
$C_1< C_2$ such that for any pair of distinct $x,y\in X$ with
$d(x,y)\leq 1/2$ we have
\begin{align}
\begin{split}\label{rozkoc}
|f^{(n)}(y)&-f^{(n)}(x)-
(\sum_{j=1}^sN^{(n)}_j(x,y)h_j)|\\
&\leq|b(x,y)N_{s+1}(x,y)-b(T^nx,T^ny)N_{s+1}(T^nx,T^ny)|
+ C_0nd(x,y),
\end{split}
\\
\label{sparse1} &\forall_{1\leq j\leq
s+1}\;(N_j(T^nx,T^ny))_{n\geq 0}\text{ is
$C_1/d(x,y)$--sparse,}\\
\label{sparse2}& \exists_{1\leq j\leq
s-2}\;(N_j(T^nx,T^ny))_{n\geq 0}\text{ is
$(C_1/d(x,y),C_2/d(x,y))$--sparse,}
\end{align}
where $N^{(n)}_j(x,y):=\sum_{k=0}^{n-1}N_j(T^kx,T^ky)$.

\begin{Lemma}\label{generalrat}
Under the above assumptions there exist $0<p_0\leq p_1$ such that
for every $\vep>0$ and $N\in\N$ there exist
$\kappa=\kappa(\vep)>0$, $0<\delta=\delta(\vep,N)<\vep$   such
that if $x,y\in X$, $0<d(x,y)<\delta$, then there are natural
numbers $M=M(x,y)\geq N$, $L=L(x,y)\geq N$ such that
$L/M\geq\kappa$ and there exists $p=p(x,y)$ with $p_0\leq |p|\leq
p_1$ such that
\[\frac{1}{L}\#\left\{M\leq n< M+L:|f^{(n)}(x)-f^{(n)}(y)-p|<\vep\right\}>1-\vep.\]
\end{Lemma}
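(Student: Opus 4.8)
The plan is to read \eqref{rozkoc} as a decomposition of the Birkhoff-sum difference $f^{(n)}(y)-f^{(n)}(x)$ into a \emph{discrete jump part} $\sum_{j=1}^s N_j^{(n)}(x,y)h_j$, a \emph{boundary term} $|b(x,y)N_{s+1}(x,y)-b(T^nx,T^ny)N_{s+1}(T^nx,T^ny)|$, and a \emph{drift} of size at most $C_0n\,d$, where I write $d:=d(x,y)$. The jump part changes only at the sparse instants recorded by the $N_j$, so on a block of consecutive indices containing no such crossing it is constant; the difference $f^{(n)}(x)-f^{(n)}(y)$ is then, up to the boundary and drift errors, equal to a fixed real number, and the displacement $p$ will be (minus) that number. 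The two things to establish are that this number lies in a fixed compact subset of $\R\setminus\{0\}$, and that the errors stay below $\vep$ off an $\vep$-proportion of the block.

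First I would annihilate the boundary constant by recentering. Since $T$ is an isometry, $d(T^kx,T^ky)=d$ for every $k$, so all of \eqref{rozkoc}--\eqref{sparse2} apply verbatim to the pair $(T^Mx,T^My)$; this turns the left side of \eqref{rozkoc} into the increment $[f^{(M+n)}(y)-f^{(M+n)}(x)]-[f^{(M)}(y)-f^{(M)}(x)]$ and replaces the leading constant by $b(T^Mx,T^My)N_{s+1}(T^Mx,T^My)$. By \eqref{sparse1} the sequence $(N_{s+1}(T^kx,T^ky))_k$ is $C_1/d$--sparse, hence nonzero only at well-separated instants; choosing the left endpoint $M$ off those instants kills the leading boundary constant, and in the recentered estimate the surviving boundary term is nonzero only at the $\le L\,d/C_1+1$ crossings of $N_{s+1}$ inside $[M,M+L]$, which I relegate to the exceptional $\vep$-fraction. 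On the remaining indices the recentered error is at most $C_0n\,d\le C_0L\,d$, which I force below $\vep$ by imposing $L<\vep/(C_0d)$.

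Next I would extract the block from the sparseness. By \eqref{sparse1} each $N_j$ crosses at most $\sim L\,d/C_1$ times in a window of length $L$, while the upper-gap bound in \eqref{sparse2} forces the distinguished index $j_0\le s-2$ to cross at least once in every window of length $C_2/d$; thus inside a gap of $N_{j_0}$ I can find a sub-block $[M,M+L]$, crossing-free for the whole collection and truncated to $L<\vep/(C_0d)$, on which every $N_j^{(n)}$ is constant. To control the magnitude of the displacement I use that each coefficient $N_j^{(M)}$ is bounded by $R(1+Md/C_1)$ by Remark~\ref{remsumsp}, so for $M$ of order $1/d$ the coefficients of $\sum_{j=1}^{s-2}N_j^{(M)}h_j+(N_{s-1}^{(M)}+N_s^{(M)})h_{s-1}$ (grouping $h_{s-1}=h_s$ from \eqref{naha}) lie in a fixed finite subset of $\Z$, so this quantity ranges over a finite set $S\subset\R$. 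The $\Q$--linear independence of $h_1,\dots,h_{s-1}$ in \eqref{naha} shows an element of $S$ vanishes exactly when all its integer coefficients vanish, in particular when $N_{j_0}^{(M)}=0$; since $N_{j_0}^{(\cdot)}$ jumps by a nonzero integer at each of its regularly spaced crossings, it is nonzero on a definite proportion of gaps, and I restrict $M$ to such a gap. Setting $p_0:=\min\{|s|:s\in S\setminus\{0\}\}$ and $p_1:=\max\{|s|:s\in S\}$ then pins $0<p_0\le|p|\le p_1$. Finally, for given $\vep,N$ I fix $\kappa=\kappa(\vep)$ and choose $\delta=\delta(\vep,N)$ so small that the admissible endpoints, all of order $1/d\to\infty$, satisfy $M,L\ge N$ and $L/M\ge\kappa$.

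I expect the main obstacle to be exactly this boundedness assertion $p_0\le|p|\le p_1$. The delicate point is that the jump part $\sum_jN_j^{(n)}h_j$ and the drift are \emph{each} of order $n\,d$, and it is only their combination $f^{(n)}(y)-f^{(n)}(x)$ that remains bounded as $n$ runs up to order $1/d$; consequently, making the \emph{absolute} difference land in a fixed compact set bounded away from $0$, rather than merely controlling its increments across a block, is what genuinely forces the recentering of the second step to be coupled with the regularly crossing distinguished index $j_0\le s-2$ and the $\Q$--independence \eqref{naha}. Reconciling the nonzero, bounded value of the plateau with the large individual drift and jump contributions is the heart of the argument, and I would spend most of the effort verifying that the position of $M$ within the $N_{j_0}$--gaps can always be chosen so that the surviving constant is a genuine nonzero element of $S$ up to the $<\vep$ error, uniformly in $x,y$.
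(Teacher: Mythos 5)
Your first two steps (the crossing-free block extracted from the sparseness hypotheses, the recentring that kills the boundary constant at $M$, the count of at most $Ld/C_1+1$ exceptional indices coming from $N_{s+1}$, and the choice $L<\vep/(C_0d)$ so that increments along the block stay below $\vep$) reproduce the paper's argument faithfully; the displacement $p=f^{(M)}(y)-f^{(M)}(x)$ is the same plateau value as in the paper. The genuine gap is exactly where you predicted it: the lower bound $|p|\geq p_0$. Your plan is to place $M$ in a gap of $N_{j_0}$ on which the counter $N_{j_0}^{(M)}\neq 0$, conclude that the jump part $\sum_j N_j^{(M)}h_j$ is a nonzero element of the finite set $S$, and then claim $p$ equals this element ``up to the $<\vep$ error.'' That last claim is false: by \eqref{rozkoc} the discrepancy between $p$ and its jump part is bounded only by $C_0Md$ plus the boundary term, not by $C_0Ld$. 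Recentring controls increments $f^{(M+n)}-f^{(M)}$ for $n\leq L$, but says nothing about the value at time $M$ itself, which carries the drift accumulated from time $0$. Any admissible $M$ must exceed the first crossing time of $N_{j_0}$, and the hypotheses allow that first crossing to occur as late as $\sim C_2/d$; hence the drift at time $M$ can be of order $C_0C_2$ --- a constant independent of $\vep$, and in general far larger than $\min\{|w|:w\in S\setminus\{0\}\}$. Nothing in \eqref{rozkoc}--\eqref{sparse2} prevents this drift from cancelling the jump part, so your $p$ can be arbitrarily close to $0$. (The same oversight makes your $p_1=\max\{|w|:w\in S\}$ too small; the paper's $p_1$ includes the terms $B$ and $3sC_0C_2$.)

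The idea you are missing is the paper's two-time comparison. Instead of certifying a single plateau, one finds \emph{two} starting times $M_1<M_2$, each followed by a crossing-free block of length $L$, with the window short: $M_2-M_1\leq(s+1)L$ (see \eqref{roznemow}). A minimality argument on the combined crossing sequence $(k_m)$ guarantees that each $N_j$ crosses at most once in $(M_1,M_2]$ and that the distinguished index $j_0\leq s-2$ does cross there, so the increment $\sum_{j=1}^s\bigl(N_j^{(M_2)}(x,y)-N_j^{(M_1)}(x,y)\bigr)h_j$ lies in $H'$ and has modulus at least $h$. Over the \emph{short} window $[M_1,M_2]$ the drift is at most $C_0(s+1)Ld<2s\vep<h/2$, so the two plateau values $f^{(M_i)}(y)-f^{(M_i)}(x)$, $i=1,2$, differ by more than $h/2$; consequently at least one of them has modulus greater than $h/4=:p_0$, and one takes $M$ to be that $M_i$. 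This is how the paper reconciles the bounded nonzero plateau with the uncontrolled drift from time $0$: the drift is never estimated at a single time, only its variation across a window of length $O(L)$, which is where your single-time strategy cannot be repaired.
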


\begin{proof}
 Let
\[H':=\left\{\sum_{j=1}^sr_j h_j:r_j\in[-R,R]\cap\Z,j=1,\ldots,s\right\}\setminus\{0\}\]
and $h:=\min\{|w|:w\in H'\}>0$.
Fix $0<\vep <\min(1/2,C_0C_1/s,h/(4s))$ and $N\geq 2$. Without
loss of generality we can assume that  $0<C_1\leq 1\leq C_0, C_2$.
Set
\begin{equation}\label{defindelkap}
\delta:=\vep^3C_1/(2C_0N)\text{ and }\kappa:=\vep/(6sC_0C_2).
\end{equation}
 Fix two distinct $x,y\in X$ with
\[d:=d(x,y)<\delta\]
and set $L=[\vep/(C_0d)]$.  Note that
\begin{equation}\label{niernal}
N/\vep^2\leq \vep/(2C_0d)\leq L\leq \vep/(C_0d)<C_1/(sd).
\end{equation}
 The number $M$ will be
chosen between $C_1/d$ and $3sC_2/d$, and we will precise its
value later. Then
\[L/M\geq \frac{\vep/(2C_0d)}{3sC_2/d}=\frac{\vep}{6sC_0C_2}=\kappa\text{ and }M\geq C_1/d\geq N.\]

By assumptions, there exists an increasing sequence $(k_m)_{m\geq
0}$, $k_0=0$ of natural numbers such that
\begin{align}
\label{zera} &\text{ $N_j(T^nx,T^ny)=0$ for $k_m< n<k_{m+1}$ and
for all
$m\geq 0$ and $j=1,\ldots,s$;}\\
\label{jedynki} &\text{  for each $m\geq 1$ there
exists $1\leq j\leq s$ with $N_j(T^{k_m}x,T^{k_m}y)\neq 0$;}\\
\label{roznkgora} &\text{ $k_{m+1}-k_m\leq C_2/d$ for $m\geq 0$;}\\
\label{roznkdol} &\text{ $k_{m+s}-k_m\geq C_1/d$ for $m\geq 1$.}
\end{align}
Since $k_{m+s}-k_m\geq C_1/d>sL$,
\begin{equation}\label{ldol}
\text{for every $m\geq 1$ there exists $m\leq m'< m+s$ such that
$k_{m'+1}-k_{m'}>L$.}
\end{equation}
We use (\ref{ldol}) for $m=s+1$ and obtain $m_1$, and  apply
again~(\ref{ldol}) for $m_1+1$ to have $s<m_1< m_2\leq m_1+s$ such
that $k_{m_i+1}-k_{m_i}>L$ for $i=1,2$. It follows that the set
\[\{(m_1,m_2)\in\N^2:s<m_1\leq 2s,\;m_1< m_2\leq m_1+s,\;
k_{m_i+1}-k_{m_i}>L,\;i=1,2\}\] is not empty. Pick a pair
$(m_1,m_2)$ from this set with the smallest $m_2-m_1$. Then
\begin{equation}\label{roznkm}
k_{m+1}-k_{m}\leq L\text{ for all }m_1<m<m_2
\end{equation} and
\begin{equation}\label{uniq}
\begin{split}&\text{for each $1\leq j\leq s$ there exists at most
one natural number $m$}\\ &\text{for which
$N_j(T^{k_{m}}x,T^{k_{m}}y)\neq 0$ and $m_1<m\leq m_2$.}
\end{split}\end{equation}
Indeed, suppose contrary to our claim that there exist $1\leq
j\leq s$ and $m_1<m'_1< m'_2\leq m_2$ such that
$N_j(T^{k_{m_i'}}x,T^{k_{m_i'}}y)\neq 0$ for $i=1,2$. Then
$m'_2-m'_1<m_2-m_1\leq s$. Since $(N_j(T^nx,T^ny))_{n\geq 0}$ is
$C_1/d$--sparse, $k_{m'_2}-k_{m'_1}\geq C_1/d>sL$. Therefore,
there exists $m'_1\leq m'<m'_2$ such that $k_{m'+1}-k_{m'}>L$,
contrary to the definition of $(m_1,m_2)$.

Take $M_1\in\{k_{m_1+1}-L,k_{m_1+1}-L+1\}$ and
$M_2\in\{k_{m_2}+1,k_{m_2}+2\}$ so that
$N_{s+1}(T^{M_i}x,T^{M_i}y)=0$ for $i=1,2$. In view of
(\ref{roznkm}) and (\ref{niernal})
\begin{equation}\label{roznemow}
M_2-M_1\leq L+2+\sum_{m_1<m<m_2}(k_{m+1}-k_m)\leq (s+1)L\leq
\frac{2s}{C_0}\frac{\vep}{d}.
\end{equation}
By (\ref{zera}) and (\ref{uniq}), for each $j=1,\ldots,s$
\[N^{(M_2)}_j(x,y)-N^{(M_1)}_j(x,y)=\sum_{m_1< m\leq m_2}N_j(T^{k_m}x,T^{k_m}y)\in[-R,R]\cap\Z.\]
Moreover, in view of  (\ref{jedynki}), there exists $1\leq j_0\leq
s-2$ such that $N_{j_0}(T^{k_{m_2}}x,T^{k_{m_2}}y)\neq 0$, hence
\[N^{(M_2)}_{j_0}(x,y)-N^{(M_1)}_{j_0}(x,y)=N_{j_0}(T^{k_{m_2}}x,T^{k_{m_2}}y)\neq 0.\]
It follows that
\[\sum_{j=1}^s(N^{(M_2)}_j(x,y)-N^{(M_1)}_j(x,y))h_j\in H'.\]
Therefore
\[\left|\sum_{j=1}^sN^{(M_2-M_1)}_j(T^{M_1}x,T^{M_1}y)h_j\right|\geq h.\]
As  $N_{s+1}(T^{M_1}x,T^{M_1}y)=N_{s+1}(T^{M_2}x,T^{M_2}y)=0$, in
view of (\ref{rozkoc}) and (\ref{roznemow}),
\begin{align*}
|f^{(M_2-M_1)}&(T^{M_1}y)-f^{(M_2-M_1)}(T^{M_1}x)-
\sum_{j=1}^sN^{(M_2-M_1)}_j(T^{M_1}x,T^{M_1}y)h_j|
\\
&\leq C_0(M_2-M_1)d<2s\vep<h/2.
\end{align*}
It follows that
\begin{align*}
\left|f^{(M_2)}(y)-f^{(M_2)}(x)-(f^{(M_1)}(y)-f^{(M_1)}(x))\right|&\\=
\left|f^{(M_2-M_1)}(T^{M_1}y)-f^{(M_2-M_1)}(T^{M_1}x)\right|&>h/2.
\end{align*}
Consequently, either for  $M=M_1$ or $M=M_2$ we have
\[|f^{(M)}(y)-f^{(M)}(x)|>h/4=:p_0>0,\]
and let $M=M_i$. Since $k_{m_i}<M<M+L-1\leq k_{m_i+1}$, by
(\ref{zera}), for all $1\leq j\leq s$
\[N_j(T^nx,T^ny)=0\text{ for all }M\leq n<M+L-1,\]
hence
\begin{equation}\label{sumzero}
N^{(n-M)}_j(T^{M}x,T^{M}y)=\sum_{M\leq k<n}N_j(T^kx,T^ky)=0\text{
for all }M\leq n<M+L.
\end{equation}
Since $s< m_1\leq 2s$ and $m_2\leq m_1+s\leq 3s$, in view of
(\ref{roznkgora}) and (\ref{roznkdol}),
\[C_1/d\leq k_{m_1}\text{ and }k_{m_2}\leq 3sC_2/d,\]
hence
\begin{equation}\label{lokM}
C_1/d\leq M\leq 3sC_2/d+2.
\end{equation}
As $N_{s+1}(T^Mx,T^My)=0$, by (\ref{rozkoc}),
\begin{align*}|f^{(M)}(y)-f^{(M)}(x)|\leq\sum_{j=1}^s\left|N^{(M)}_j(x,y)\right||h_j|+\left|b(x,y)N_{s+1}(x,y)\right|+C_0Md.
\end{align*}
Since $(N_j(T^nx,T^ny))_{n\geq 0}$ is $C_1/d$--sparse, by
Remark~\ref{remsumsp} and (\ref{lokM}),
\[\left|N^{(M)}_j(x,y)\right|\leq R(1+Md/C_1)\leq R(3sC_2/C_1+2),\]
hence
\[\left|f^{(M)}(y)-f^{(M)}(x)\right|\leq R\left((3sC_2/C_1+2)\sum_{j=1}^s|h_j|+B+3sC_0C_2+2\right)=:p_1.\]
Let $p:=f^{(M)}(y)-f^{(M)}(x)$. Then, in view of (\ref{sumzero}),
(\ref{rozkoc}) and (\ref{niernal}), for each $M\leq n<M+L$ we have
\begin{align*}
|f^{(n)}&(y)-f^{(n)}(x)-p|=\left|f^{(n)}(y)-f^{(n)}(x)-(f^{(M)}(y)-f^{(M)}(x))\right|\\
&=\left|f^{(n-M)}(T^My)-f^{(n-M)}(T^Mx)-\sum_{j=1}^sN^{(n-M)}_j(T^{M}x,T^{M}y)h_j\right|\\
&\leq
C_0(n-M)d+B\left|N_{s+1}(T^{M}x,T^{M}y)\right|+B\left|N_{s+1}(T^{n}x,T^{n}y)\right|\\&<
C_0Ld +B\left|N_{s+1}(T^{n}x,T^{n}y)\right|\leq \vep
+B\left|N_{s+1}(T^{n}x,T^{n}y)\right|.
\end{align*}
Since $(N_{s+1}(T^{n}x,T^{n}y))_{n\geq 0}$ is $C_1/d$--sparse,
\[\#\{M\leq n<M+L:N_{s+1}(T^{n}x,T^{n}y)\neq 0\}\leq dL/C_1+1.\]
It follows that
\[\#\{M\leq n<M+L:|f^{(n)}(y)-f^{(n)}(x)-p|<\vep\}\geq L-dL/C_1-1.\]
In view of (\ref{defindelkap}) and (\ref{niernal}),
\[dL/C_1+1\leq \vep^3C_1/2\cdot L/C_1+L\vep^2< \vep L.\]
Consequently,
\[\#\{M\leq
n<M+L:|f^{(n)}(y)-f^{(n)}(x)-p|<\vep\}> (1-\vep)L,\] which
completes the proof.
\end{proof}

We will consider now $T$ an isometry of a (compact) metric space
$(X,d)$ which is ergodic with respect to a probability Borel
measure $\mu$. We will assume that $(\widehat{X},\widehat{d})$ is
another metric space. Moreover, we assume that
$\pi:(\widehat{X},\widehat{d})\to(X,d)$ is a surjective function
which, in addition, is  uniformly locally isometric. More
precisely, $\pi:B_{\widehat{d}}(\widehat{x},1/2)\to
B_{{d}}(\pi(\widehat{x}),1/2)$ is a bijective isometry for every
$\widehat{x}\in\widehat{X}$. Let
$\widehat{T}:\widehat{X}\to\widehat{X}$ be an isometry of
$(\widehat{X},\widehat{d})$ such that
$\pi\circ\widehat{T}=T\circ\pi$.

\begin{Prop}\label{propklu}
Let $T:(X,\mu)\to (X,\mu)$ be an ergodic isometry of a metric
space $(X,d)$. Suppose that $f:X\to\R$ is a bounded positive Borel
function which is bounded away from zero. Let
$\widehat{f}:\widehat{X}\to\R$ given by $\widehat{f}=f\circ\pi$.
Assume that there exists a collection of real numbers
$H=\{h_1,\ldots,h_s\}$  and Borel functions $b:\widehat{X}\times
\widehat{X}\to\R$, $N_j:\widehat{X}\times \widehat{X}\to\Z$,
$j=1,\ldots,s+1$, satisfying (\ref{naha})-(\ref{sparse2}) for
$\widehat{f}$ and $\widehat{T}$. Then the special flow $T^f$
satisfies weak Ratner's property.
\end{Prop}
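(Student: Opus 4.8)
The plan is to reduce everything to Lemma~\ref{generalrat} and Proposition~\ref{lemfundsp} by lifting pairs of points from $X$ to $\widehat{X}$ through the locally isometric projection $\pi$. First I would apply Lemma~\ref{generalrat} to the isometry $\widehat{T}$ of $(\widehat{X},\widehat{d})$ and to $\widehat{f}$, which by hypothesis satisfy (\ref{naha})--(\ref{sparse2}). This produces constants $0<p_0\leq p_1$ (depending only on the structural constants $R,B,C_0,C_1,C_2$ and on $H$) together with, for each $\vep>0$ and $N\in\N$, numbers $\kappa(\vep)>0$ and $0<\delta(\vep,N)<\vep$ governing the frequency estimate for distinct points of $\widehat{X}$. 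I would then set $P:=\{p\in\R:p_0\leq|p|\leq p_1\}$, which is compact and does not contain $0$.

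The key step is the transfer from $\widehat{X}$ back to $X$. Given distinct $x,y\in X$ with $d(x,y)<\delta\leq 1/2$, I would pick any $\widehat{x}\in\pi^{-1}(x)$ and use that $\pi:B_{\widehat{d}}(\widehat{x},1/2)\to B_{d}(x,1/2)$ is a bijective isometry to obtain the unique $\widehat{y}\in B_{\widehat{d}}(\widehat{x},1/2)$ with $\pi(\widehat{y})=y$; then $\widehat{d}(\widehat{x},\widehat{y})=d(x,y)$, so $\widehat{x}\neq\widehat{y}$ and $0<\widehat{d}(\widehat{x},\widehat{y})<\delta$. The intertwining $\pi\circ\widehat{T}=T\circ\pi$ together with $\widehat{f}=f\circ\pi$ gives $\widehat{f}(\widehat{T}^k\widehat{x})=f(T^k x)$ for all $k\geq 0$, whence $\widehat{f}^{(n)}(\widehat{x})=f^{(n)}(x)$ and likewise $\widehat{f}^{(n)}(\widehat{y})=f^{(n)}(y)$ for every $n$. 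Consequently $\widehat{f}^{(n)}(\widehat{x})-\widehat{f}^{(n)}(\widehat{y})=f^{(n)}(x)-f^{(n)}(y)$, so the frequency estimate delivered by Lemma~\ref{generalrat} for the lifted pair translates verbatim into
\[\frac{1}{L}\#\left\{M\leq n< M+L:|f^{(n)}(x)-f^{(n)}(y)-p|<\vep\right\}>1-\vep\]
with $p\in P$, $M,L\geq N$ and $L/M\geq\kappa$. Taking $Z=X$ (so $\mu(Z)=1$), this is precisely the hypothesis of Proposition~\ref{lemfundsp} for $T$, $f$ and the compact set $P$.

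Finally I would invoke Proposition~\ref{lemfundsp} to conclude that $T^f$ has the $\rat(\gamma,P)$--property for every $\gamma>0$ for which the time-$\gamma$ automorphism $T^f_\gamma$ is ergodic. Since $T$ is ergodic and $f$ is positive and integrable, the special flow $T^f$ is an ergodic flow; and it is well known (via the spectral theorem) that for an ergodic flow the set of $\gamma$ for which $T^f_\gamma$ fails to be ergodic is contained in $\bigcup_{\theta}(2\pi/\theta)\Z$, where $\theta$ runs over the countably many nonzero eigenvalues of the flow, hence is countable. Thus $T^f$ enjoys the $\rat(\gamma,P)$--property for uncountably many $\gamma$, which is exactly weak Ratner's property. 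The genuinely delicate point is this last uncountability step; the lifting itself is forced by the uniform local isometry and requires only that $\pi$ preserve both distances and Birkhoff sums, both of which are immediate from the hypotheses.
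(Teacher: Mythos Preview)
Your proof is correct and follows essentially the same route as the paper: apply Lemma~\ref{generalrat} to $\widehat{T}$ and $\widehat{f}$, lift close pairs $x,y\in X$ via the uniformly locally isometric $\pi$ so that Birkhoff sums transfer, and then invoke Proposition~\ref{lemfundsp} with $P=[-p_1,-p_0]\cup[p_0,p_1]$. You supply slightly more detail than the paper does---in particular the explicit co-countability argument for the set of ergodic time-$\gamma$ maps, which the paper leaves implicit in the phrase ``weak Ratner's property follows''---but the strategy is identical.
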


\begin{proof}
By Lemma~\ref{generalrat} applied to $\widehat{T}$ and
$\widehat{f}$, for every $0<\vep<1/2$ and $N\in\N$ there exist
$\kappa=\kappa(\vep)>0$, $0<\delta=\delta(\vep,N)<\vep$ such that
if $\widehat{x},\widehat{y}\in \widehat{X}$,
$0<\widehat{d}(\widehat{x},\widehat{y})<\delta$, then there are
natural numbers $M=M(\widehat{x},\widehat{y})\geq N$,
$L=L(\widehat{x},\widehat{y})\geq N$ such that $L/M\geq\kappa$ and
there exists $p=p(\widehat{x},\widehat{y})$ with $p_0\leq |p|\leq
p_1$ such that
\[\frac{1}{L}\#\left\{M\leq n< M+L:|\widehat{f}^{(n)}(\widehat{x})-\widehat{f}^{(n)}(\widehat{y})-p|<\vep\right\}>1-\vep.\]

Let $x,y\in X$ arbitrary distinct point such that $d(x,y)<\delta$.
By assumption, there are distinct $\widehat{x},\widehat{y}\in
\widehat{X}$ such that $\pi(\widehat{x})=x$, $\pi(\widehat{y})=y$
and $\widehat{d}(\widehat{x},\widehat{y})=d(x,y)<\delta$. Since
\[\widehat{f}^{(n)}(\widehat{x})-\widehat{f}^{(n)}(\widehat{y})=f^{(n)}(x)-f^{(n)}(y),\]
it follows that $T$ and $f$ verify the assumptions of
Proposition~\ref{lemfundsp}  with $P=[-p_1,-p_0]\cup[p_0,p_1]$.
This gives $\rat(t_0,P)$--property for all
$t_0\in\R\setminus\{0\}$ such that $T^f_{t_0}$ is ergodic and weak
Ratner's property follows.
\end{proof}

\section{Special flows over rotations on the two torus}\label{secconcrspf}
In this section we will deal with special flows over ergodic
rotations $T(x,y)=(x+\alpha,y+\beta)$ on $\T^2$. We will
constantly assume that both $\alpha$ and $\beta$ have bounded
partial quotients. We will consider roof functions of the form
\[f(x,y)=f_1(x)+f_2(y)+g(x,y)+\gamma h(x,y),\]
where $f_1, f_2:\T\to\R$ are piecewise $C^2$--functions which are
not continuous, $g:\T^2\to\R$ is $C^2$, $h:\T^2\to\R$ is given by
$$h(x,y)=\alpha\{y\}-(\{x\}+\alpha)[\{y\}+\beta]$$
and $\gamma\in\R$.  The function $h$ naturally appears when
considering rotations on the nil-manifold which is the quotient of
the Heisenberg group modulo its subgroup of matrices with integer
coefficients.

In order to prove weak Ratner's property for the corresponding
special flows, we will apply Proposition~\ref{propklu} in which
$X=\T^2=\R^2/\Z^2$, $\widehat{X}=\R^2$, $\pi:\R^2\to\T^2$ is
defined naturally and $\widehat{T}$ is the translation on $\R^2$
by $(\alpha,\beta)$.

\begin{Lemma}\label{lemsparse}
Let $\alpha\in\R$ be an irrational number with bounded partial
quotients. Let us consider the function $N:\R\times\R\to\Z$,
$N(x,x')=[x']-[x]$. Then there exist positive constants $C_1, C_2$
such that for any pair $x,x'\in\R$ of  points with $0<|x-x'|<1/2$
the sequence $(N(x+n\alpha,x'+n\alpha))_{n\geq 0}$ is
$(C_1/|x-x'|,C_2/|x-x'|)$--sparse.
\end{Lemma}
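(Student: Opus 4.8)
The plan is to understand the function $N(x,x')=[x']-[x]$ as counting the net number of integer points crossed as we move from $x$ to $x'$, and to track how this count evolves under the rotation $n\mapsto n\alpha$. Concretely, writing $d=|x-x'|<1/2$, the value $N(x+n\alpha,x'+n\alpha)$ is nonzero precisely when the interval with endpoints $x+n\alpha$ and $x'+n\alpha$ contains an integer, i.e.\ when there is an integer $k$ lying between the two orbit points. Since the interval has length $d<1/2$, it can contain at most one integer, so $N$ takes values in $\{-1,0,1\}$ (hence $R=1$ suffices). The sparseness we must establish is therefore a statement about the return times of the orbit of a single point (say $x$) to a short interval of length $d$: the times $n$ at which $\{x+n\alpha\}$ (or $\{x'+n\alpha\}$) falls within distance $d$ of an integer.

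The key step is the lower bound, condition (ii) of sparseness: consecutive nonzero times $k_m$ must satisfy $k_{m+1}-k_m\geq C_1/d$. This is exactly where the bounded partial quotients hypothesis enters. If $n_1<n_2$ are two consecutive times at which the rotation orbit enters an interval of length $d$, then $\{x+n_1\alpha\}$ and $\{x+n_2\alpha\}$ both lie near the same integer, forcing $\|(n_2-n_1)\alpha\|<d$. But for $\alpha$ with bounded partial quotients the excerpt records that there is a constant $C$ with $\|n\alpha\|\geq 1/(C|n|)$ for all nonzero integers $n$; applying this to $n=n_2-n_1$ gives $1/(C(n_2-n_1))\leq\|(n_2-n_1)\alpha\|<d$, whence $n_2-n_1>1/(Cd)$. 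This yields the gap bound with $C_1=1/C$ (up to adjusting for the fact that the two crossing instants for $x$ versus $x'$ may interleave, which only changes constants).

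For the upper bound, condition (iii), $k_{m+1}-k_m\leq C_2/d$, I would invoke the quantitative equidistribution of the rotation: an orbit with bounded partial quotients has bounded gaps of return into any interval of length comparable to $d$. The cleanest route is through the three-distance (Steinhaus) theorem together with the denominator recursion $q_{s+1}\leq Cq_s$ recorded in the excerpt: choosing $q_s$ to be the denominator with $q_s\sim 1/d$, the orbit $\{0,\alpha,\dots,(q_s-1)\alpha\}$ partitions the circle into gaps each of length at most $C'/q_s\sim C'd$, so every interval of length $\asymp d$ is hit within at most $q_s\leq C_2/d$ steps. This gives the desired upper gap bound.

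The main obstacle will be bookkeeping the distinction between the crossing times determined by $x$ and those determined by $x'$, and absorbing the resulting doubling into clean constants $C_1<C_2$ that are uniform over all admissible pairs $(x,x')$. One must check that the set of $n$ with $N(x+n\alpha,x'+n\alpha)\neq 0$ is the union of the near-integer times for $x$ and for $x'$, that these two families each have gaps bounded below by $\sim 1/(Cd)$ and above by $\sim C_2/d$, and that merging them preserves a (smaller) lower gap and a (larger) upper gap. Once the constants are fixed so that $C_1\leq C_2$ and both are independent of the particular pair, the sparseness claim follows directly. I do not expect any genuine difficulty beyond this uniform constant-tracking, since both the lower and upper estimates are immediate consequences of the bounded-partial-quotient Diophantine properties already quoted in the excerpt.
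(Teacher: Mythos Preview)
Your approach is essentially correct and matches the paper's, but you have introduced an unnecessary complication. The condition $N(x+n\alpha,x'+n\alpha)\neq 0$ is \emph{not} two separate conditions that must be merged: assuming without loss of generality $x>x'$, one has $[x+n\alpha]-[x'+n\alpha]=1$ precisely when $n\alpha\in[-x,-x')+\Z$, a \emph{single} interval of length $d=x-x'$ on $\T$. So the set of nonzero times is exactly the set of returns of the orbit $(n\alpha)_{n\geq 0}$ to one fixed interval of length $d$; there is no interleaving of ``$x$-crossings'' and ``$x'$-crossings'' to track, and the paragraph you flagged as the main obstacle evaporates.

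With that simplification, your two estimates are exactly what the paper uses. The paper packages both bounds at once by invoking the standard fact (for $\alpha$ with bounded partial quotients) that the partition of $\T$ by $0,\alpha,\ldots,(m-1)\alpha$ has all interval lengths satisfying $2C_1/m\leq|I|\leq C_2/(2m)$. If $n_1<n_2$ are consecutive hitting times, then $[-x,-x')$ contains exactly one point of $n_1\alpha,\ldots,(n_2-1)\alpha$ and exactly two points of $n_1\alpha,\ldots,n_2\alpha$, whence $d<C_2/(n_2-n_1)$ and $d>C_1/(n_2-n_1)$. Your route---the Diophantine inequality $\|n\alpha\|\geq 1/(C|n|)$ for the lower gap bound, and the three-distance theorem plus $q_{s+1}\leq Cq_s$ for the upper gap bound---yields the same constants via the same underlying facts, just organised slightly differently.
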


\begin{proof}
Since $\alpha$ has bounded partial quotients, there are constants
$C_1,C_2>0$ such that for each $m\in\N$ the lengths of intervals
$I$ in the partition of $\T$ arisen from
$0,\alpha,\ldots,(m-1)\alpha$ satisfy
\[\frac{2C_1}{m}\leq |I|\leq\frac{C_2}{2m}.\]
Suppose that $x>x'$. Then $[x+n\alpha]-[x'+n\alpha]\in\{0,1\}$ and
\[[x+n\alpha]-[x'+n\alpha]=1\text{ if and only if
}n\alpha\in[-x,-x')+\Z.\] Suppose that $n_1<n_2$ are natural
numbers such that  $n_1\alpha,n_2\alpha\in[-x,-x')+\Z$ and
$n\alpha\notin[-x,-x')+\Z$ for $n_1<n<n_2$. It follows that the
interval $[-x,-x')$ (as an interval on $\T$) contains exactly one
point of the sequence $n_1\alpha,\ldots,(n_2-1)\alpha$, hence
\[|[-x,-x')|<2\frac{C_2}{2(n_2-n_1)}.\]
Moreover, $[-x,-x')$  contains exactly two points of the sequence
$n_1\alpha,\ldots,n_2\alpha$, hence
\[|[-x,-x')|> \frac{2C_1}{n_2-n_1+1}\geq \frac{C_1}{n_2-n_1}.\]
Therefore,
\[\frac{C_1}{|x'-x|}<n_2-n_1<\frac{C_2}{|x'-x|},\]
which completes the proof.
\end{proof}

\begin{Remark}
Let us consider the function $u:\R\to\R$, $u(x)=\{x\}$. Then for
the translation $x\mapsto x+\alpha$ on $\R$
\[u^{(n)}(x)=\sum_{k=0}^{n-1}\{x+k\alpha\}=
\sum_{k=0}^{n-1}(x+k\alpha-[x+k\alpha])\]
and for distinct $x,x'\in\R$ we have
\begin{equation}\label{roznlin}
u^{(n)}(x')-u^{(n)}(x)=n(x'-x)+\sum_{k=0}^{n-1}([x'+k\alpha]-[x+k\alpha]).
\end{equation}
\end{Remark}

\begin{Remark}
Let us consider the function $\widehat{h}:\R^2\to\R$,
$\widehat{h}(x,y)=\alpha\{y\}-(\{x\}+\alpha)[\{y\}+\beta]$. (Note
that $\widehat{h}=h\circ\pi$.) Observe that
\[\widehat{h}(x,y)=
\alpha y+x[y]-(x+\alpha)[y+\beta]+[x]([\{y\}+\beta]).\] Then, for
the translation $(x,y)\mapsto (x+\alpha,y+\beta)$ on $\R^2$ we
have
\[
\widehat{h}^{(n)}(x,y)=
x[y]-(x+n\alpha)[y+n\beta]+\alpha\sum_{k=0}^{n-1}(y+k\beta)+\sum_{k=0}^{n-1}[x+k\alpha]([\{y+k\beta\}+\beta]).
\]
It follows that
\begin{align*}
\widehat{h}^{(n)}(x',y')&-\widehat{h}^{(n)}(x,y)\\
=&\;x'[y']-x[y]-(x'+n\alpha)[y'+n\beta]+(x+n\alpha)[y+n\beta]+\alpha n(y'-y)\\
&+\sum_{k=0}^{n-1}([x'+k\alpha][\{y'+k\beta\}+\beta]-[x+k\alpha][\{y+k\beta\}+\beta]),
\end{align*}
hence
\begin{align*}
\widehat{h}^{(n)}(x',y')-\widehat{h}^{(n)}(x,y)=&\; \alpha
n(y'-y)-([y+n\beta]-[y])(x'-x)\\
&+x'([y']-[y])-(x'+n\alpha)([y'+n\beta]-[y+n\beta])\\
&+\sum_{k=0}^{n-1}([x'+k\alpha]-[x+k\alpha])[\{y+k\beta\}+\beta]
\\
&+\sum_{k=0}^{n-1}[x'+k\alpha]([\{y'+k\beta\}+\beta]-
[\{y+k\beta\}+\beta]).
\end{align*}
Moreover,
\begin{eqnarray*}
\lefteqn{\sum_{k=0}^{n-1}[x'+k\alpha]([\{y'+k\beta\}+\beta]-
[\{y+k\beta\}+\beta])}\\&=&
\sum_{k=0}^{n-1}[x'+k\alpha](([y'+(k+1)\beta]-
[y+(k+1)\beta])-([y'+k\beta]-[y+k\beta]))\\
&=&\sum_{k=0}^{n-1}([x'+k\alpha]-[x'+(k+1)\alpha])
([y'+(k+1)\beta]-[y+(k+1)\beta])\\
&&+[x'+n\alpha]([y'+n\beta]-[y+n\beta])-[x']([y']-[y]).
\end{eqnarray*}
Consequently,
\begin{align}\label{roznnil}
\begin{split}
\widehat{h}^{(n)}(x',y')-\widehat{h}^{(n)}&(x,y)= \alpha
n(y'-y)-([\{y\}+n\beta])(x'-x)\\
&+\{x'\}([y']-[y])-\{x'+n\alpha\}([y'+n\beta]-[y+n\beta])\\
&-\sum_{k=0}^{n-1}[\{x'+k\alpha\}+\alpha]
([y'+(k+1)\beta]-[y+(k+1)\beta])\\
&+\sum_{k=0}^{n-1}([x'+k\alpha]-[x+k\alpha])[\{y+k\beta\}+\beta].
\end{split}
\end{align}
\end{Remark}

\begin{Th}\label{specialR}
Let  $T(x,y)=(x+\alpha,y+\beta)$, $\alpha,\beta\in[0,1)$,  be an
ergodic rotation on the torus $\T^2$such that both $\alpha$ and
$\beta$ have bounded partial quotients. Let $f:\T^2\to\R_+$ be  of
the form
\[f(x,y)=f_1(x)+f_2(y)+g(x,y)+\gamma h(x,y),\]
where $f_1, f_2:\T\to\R$ are piecewise $C^2$--functions which are
not continuous and $g:\T^2\to\R$ is $C^2$. Suppose that $f_i$ has
$s_i$ discontinuities with jumps of size $d_{i,1},\ldots,
d_{i,s_i}$ for $i=1,2$. Assume that  $d_{1,1},\ldots,
d_{1,s_1},d_{2,1}\ldots, d_{2,s_2}$ are independent over $\Q$.
Then $T^f$ satisfies weak Ratner's property in the following two
cases:
\begin{itemize}
\item[(i)] $\gamma,d_{1,1},\ldots, d_{1,s_1},d_{2,1},\ldots,
d_{2,s_2}$ are independent over $\Q$ and
$\sum_{j=1}^{s_1}d_{1,j}-\beta\gamma$ or
$\sum_{j=1}^{s_2}d_{2,j}+\alpha\gamma$ is non-zero;
\item[(ii)] $\gamma=0$.
\end{itemize}
\end{Th}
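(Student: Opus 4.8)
The plan is to deduce the statement from Proposition~\ref{propklu}, applied with $X=\T^2=\R^2/\Z^2$, $\widehat X=\R^2$, $\pi\colon\R^2\to\T^2$ the natural (locally isometric) projection, and $\widehat T$ the translation of $\R^2$ by $(\alpha,\beta)$. Everything then reduces to producing reals $H=\{h_1,\dots,h_s\}$ and Borel functions $b,N_1,\dots,N_{s+1}$ on $\R^2\times\R^2$ satisfying (\ref{naha})--(\ref{sparse2}) for $\widehat f=f\circ\pi$ and $\widehat T$. To this end I would first decompose the lifted roof: write each $\widehat f_i$ as a Lipschitz function minus $\sum_j d_{i,j}\{\,\cdot-c_{i,j}\}$, where $c_{i,j}$ are the discontinuities, and then expand $\widehat f^{(n)}(z')-\widehat f^{(n)}(z)$ for two close points $z=(x,y)$, $z'=(x',y')$ using the identity (\ref{roznlin}) for the $f_1,f_2$--parts and the explicit formula (\ref{roznnil}) for the Heisenberg part $\widehat h$.

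Next I would read off the coordinates from this expansion. Let $h_1,\dots,h_{s_1}$ be the jumps $d_{1,j}$, with $N_j$ counting the $\alpha$--crossings of the $j$--th discontinuity of $f_1$, and $h_{s_1+1},\dots,h_{s_1+s_2}$ the jumps $d_{2,j}$, with $N_j$ counting the $\beta$--crossings for $f_2$. In case (i) the two $\gamma$--weighted sums in (\ref{roznnil}) provide the last two coordinates: set $h_{s-1}=h_s=\gamma$ with $N_{s-1},N_s$ the mixed counting functions (one firing at $\beta$--crossings, the other at $\alpha$--crossings, each with a $\{0,1\}$--weight), so $s=s_1+s_2+2$; the boundary terms $\{x'\}([y']-[y])$ and $\{x'+n\alpha\}([y'+n\beta]-[y+n\beta])$ are absorbed into the term $|bN_{s+1}-(bN_{s+1})\circ\widehat T^n|$ of (\ref{rozkoc}) with $N_{s+1}(z',z)=[y']-[y]$ and $\gamma$ folded into $b$. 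In case (ii) ($\gamma=0$) there is no Heisenberg contribution; I take $N_{s+1}\equiv b\equiv0$ and fill the obligatory repeated slot by two trivial coordinates $N_{s-1}\equiv N_s\equiv0$ carrying one auxiliary value chosen $\Q$--independent of all the jumps. In either case the leftover terms --- the Lipschitz parts of $f_1,f_2,g$ and the drift terms $-(\sum_j d_{1,j})\,n(x'-x)$, $\alpha n(y'-y)$, $-[\{y\}+n\beta](x'-x)$ --- are all bounded by $C_0\,n\,d(z,z')$, so (\ref{rozkoc}) holds.

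It then remains to verify (\ref{naha})--(\ref{sparse2}). Condition (\ref{naha}) is exactly the independence hypothesis: the independent list $h_1,\dots,h_{s-1}$ is $\{d_{i,j}\}\cup\{\gamma\}$ in case (i) and $\{d_{i,j}\}$ together with the auxiliary value in case (ii), while the repeated pair $h_{s-1}=h_s$ is $\gamma$ (resp.\ the auxiliary value). Sparseness is where Lemma~\ref{lemsparse} is used: because both $\alpha$ and $\beta$ have bounded partial quotients, every $\alpha$--crossing sequence is $(C_1/|x'-x|,C_2/|x'-x|)$--sparse and every $\beta$--crossing sequence is $(C_1/|y'-y|,C_2/|y'-y|)$--sparse, which gives the one--sided bound (\ref{sparse1}) for all coordinates (the mixed and trivial ones included). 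For (\ref{sparse2}) I would, for each pair $z,z'$, use a crossing coordinate in the coordinate direction of largest displacement; the ordering above keeps every jump--coordinate at index $\le s-2$ (the duplicated slots being taken by the $\gamma$--terms in (i) or the trivial coordinates in (ii)), so a two--sided sparse coordinate of index $\le s-2$ is always available. Proposition~\ref{propklu} then yields weak Ratner's property. I would also note that the ``$\ne0$'' clause of (i) is precisely the weak von Neumann condition (\ref{vonNeumann02}) for $f$, since $\int f_x=\pm(\sum_j d_{1,j}-\gamma\beta)$ and $\int f_y=\pm(\sum_j d_{2,j}+\gamma\alpha)$; by Theorems~\ref{slabemieszanie} and~\ref{aopr} it keeps the flow weakly mixing and free of partial rigidity (automatic in case (ii), as $\Q$--independent jumps cannot sum to zero), which is what lets the property just proved be upgraded to mild mixing.

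The main obstacle will be the Heisenberg term. The $f_1,f_2$--parts telescope just as in the one--dimensional treatment of \cite{Fr-Le}, but $\widehat h^{(n)}(z')-\widehat h^{(n)}(z)$ does not; obtaining the clean form (\ref{roznnil}) (via the summation by parts performed in the Remark preceding the theorem) and then recognizing its two $\gamma$--weighted sums as honest sparse integer--valued counting functions --- both carrying the single value $\gamma$, so that they exactly fill the repeated--coordinate slot that (\ref{naha}) forces --- is the heart of the argument. A second, genuinely delicate point is to verify (\ref{sparse2}) uniformly over all pairs, including those whose lifts are nearly aligned with a coordinate axis (where one crossing sequence becomes very sparse or empty); this is what dictates the ordering of the coordinates and the simultaneous use of the bounded partial quotients of $\alpha$ and of $\beta$.
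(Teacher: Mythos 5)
Your proposal is correct and is essentially the paper's own proof: the same reduction via Proposition~\ref{propklu} with $\widehat{X}=\R^2$, the same splitting of $f_1,f_2$ into Lipschitz parts plus $\sum_j d_{i,j}\{\,\cdot\,-\Delta_{i,j}\}$, the identities (\ref{roznlin}) and (\ref{roznnil}), the same choice of data (jump-crossing counting functions for the $d_{i,j}$, the two $\gamma$-weighted mixed sums occupying the duplicated slot $h_{s-1}=h_s=\gamma$, and $b=\gamma\{x'\}$, $N_{s+1}=[y']-[y]$ absorbing the boundary terms), and Lemma~\ref{lemsparse} with the dichotomy $|x'-x|\geq|y'-y|$ versus $|x'-x|\leq|y'-y|$ to verify (\ref{sparse1})--(\ref{sparse2}). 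The paper dispatches case (ii) with ``runs as before,'' and your explicit device --- an auxiliary value $\Q$-independent of the jumps carried by identically zero counting functions --- is a legitimate way to fill that in, since identically vanishing sequences are (vacuously) sparse, exactly as the paper already implicitly assumes in case (i) for pairs with $x=x'$.
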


\begin{proof}
Since
\[\int_{\T^2}f_x(x,y)\,dxdy=\sum_{j=1}^{s_1}d_{1,j}-\beta\gamma\text{ and }
\int_{\T^2}f_y(x,y)\,dxdy=\sum_{j=1}^{s_2}d_{2,j}+\alpha\gamma,\]
by Theorem~\ref{slabemieszanie}, the special flow $T^f$ is weakly
mixing.

Note that every piecewise $C^2$--function $F:\T\to\R$ with $s$
discontinuities $\Delta_1,\ldots,\Delta_s$ with jumps
$d_1,\ldots,d_s$ respectively, can be represented as
\[F(x)=\widetilde{F}(x)+\sum_{j=1}^sd_j\{x-\Delta_j\},\]
where $\widetilde{F}$ is a continuous function which is piecewise
$C^2$. Therefore, we can assume that
\[f_i(x)=\sum_{j=1}^sd_{i,j}\{x-\Delta_{i,j}\}\text{ for }i=1,2\]
and $g$ is a Lipschitz function.

We proceed to the proof of (i). On $\R^2$ and $\T^2$ we will
consider the metrics
$\widehat{d}((x,y),(x',y'))=\max(|x'-x|,|y'-y|)$ and
$d((x,y),(x',y'))=\max(\|x'-x\|,\|y'-y\|)$, respectively. Then the
map $\pi:\R^2\to\T^2$, $\pi(x,y)=(x,y)+\Z^2$ is surjective and
uniformly locally isometric. Moreover, $\pi$ is equivariant for
the translation $\widehat{T}:\R^2\to\R^2$,
$\widehat{T}(x,y)=(x+\alpha,y+\beta)$ and $T$. Let
$\widehat{f}=f\circ \pi$ and $\widehat{g}=g\circ \pi$. In view of
(\ref{roznlin}) and (\ref{roznnil})
\begin{align*}
\widehat{f}^{(n)}&(x',y')-\widehat{f}^{(n)}(x,y)=\widehat{g}^{(n)}(x',y')-\widehat{g}^{(n)}(x,y)\\
&+n\left(\sum_{j=1}^{s_1}d_{1,j}-\gamma\frac{[\{y\}+n\beta]}{n}\right)(x'-x)+n\left(\sum_{j=1}^{s_2}d_{2,j}+\gamma\alpha\right)(y'-y)\\
&+\sum_{i=1,2}\left(\sum_{j=1}^{s_i}N_{i,j}^{(n)}((x,y),(x',y'))d_{i,j}+
N_i^{(n)}((x,y),(x',y'))\gamma\right)\\&-
b(\widehat{T}^n(x,y),\widehat{T}^n(x',y'))
N(\widehat{T}^n(x,y),\widehat{T}^n(x',y'))\\&
+b((x,y),(x',y'))N((x,y),(x',y')),
\end{align*}
where $N_{(\,\cdot\,)}:\R^2\times\R^2\to\Z$ and
$b:\R^2\times\R^2\to\R$ are given by
\[N_{1,j}((x,y),(x',y'))=[x'-\Delta_{1,j}]-[x-\Delta_{1,j}], \]\[N_{2,j}((x,y),(x',y'))=[y'-\Delta_{2,j}]-[y-\Delta_{2,j}],\]
\[
N_1((x,y),(x',y'))=([x']-[x])[\{y\}+\beta],\]\[N_2((x,y),(x',y'))=-[\{x'\}+\alpha]
([y'+\beta]-[y+\beta])\]
\[b((x,y),(x',y'))=\gamma\{x'\},\;\;N((x,y),(x',y'))=[y']-[y].\]
   By
Lemma~\ref{lemsparse}, there exist positive constants $C_1$, $C_2$
such that for any pair of distinct point $(x,y)$, $(x',y')$ in
$\R^2$ with $d:=\widehat{d}((x,y),(x',y'))\leq 1/2$ we have:
\begin{itemize}
\item $|N_{(\,\cdot\,)}((x,y),(x',y'))|\leq 1$ and $|b((x,y),(x',y'))|\leq |\gamma|$;
\item each sequence
$\left(N_{(\,\cdot\,)}(\widehat{T}^n(x,y),\widehat{T}^n(x',y'))\right)_{n\geq
0}$ is $C_1/d$--sparse;
\item $\left(N_{1,j}(\widehat{T}^n(x,y),\widehat{T}^n(x',y'))\right)_{n\geq 0}$ is
$(C_1/d,C_2/d)$--sparse for all $j=1,\ldots,s_1$ whenever
$|x'-x|\geq |y'-y|$;
\item $\left(N_{2,j}(\widehat{T}^n(x,y),\widehat{T}^n(x',y'))\right)_{n\geq 0}$ is
$(C_1/d,C_2/d)$--sparse for all $j=1,\ldots,s_2$ whenever
$|x'-x|\leq |y'-y|$.
\end{itemize}
Moreover, if $L$ stands for the Lipschitz constant of $g$ then
\begin{align*}|\widehat{g}^{(n)}&(x',y')-\widehat{g}^{(n)}(x,y)|\\
&+\left|n\left(\sum_{j=1}^{s_1}d_{1,j}-\frac{[\{y\}+n\beta]}{n}\right)(x'-x)
+n\left(\sum_{j=1}^{s_2}d_{2,j}+\alpha\right)(y'-y)\right|\\
&\;\;\;\;\;\;\;\;\leq nC_0\widehat{d}((x,y),(x',y')),
\end{align*}
where
\[C_0=L+\sum_{j=1}^{s_1}|d_{1,j}|+\sum_{j=1}^{s_2}|d_{2,j}|+|\alpha|+|\beta|+2.\]
 Since $d_{1,1},\ldots,
d_{1,s_1},d_{2,1},\ldots, d_{2,s_2},\gamma$ are independent over
$\Q$, the assumptions of Proposition~\ref{propklu} are verified
with $R=1$ and $B=|\gamma|$. This completes the proof of weak
Ratner's property for the special flow $T^f$ in case~(i).

The  proof in case (ii) runs as before.
\end{proof}

\section{Mild mixing}\label{MMM} Using a result from \cite{Fr-Le} we will
now show mild mixing property for the class of flows from the
previous section.
\begin{Lemma}[see \cite{Fr-Le}]\label{jmm}
Let $(S_t)_{t\in\R}$ be an ergodic flow on $\xbm$ which has finite
fibers factor property. Then the flow $(S_t)_{t\in\R}$ is mildly
mixing provided it is not partially rigid.\bez
\end{Lemma}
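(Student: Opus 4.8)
The plan is to obtain the lemma from the Furstenberg--Weiss characterization of mild mixing recalled above: a measure preserving flow is mildly mixing if and only if it possesses no non--trivial rigid factor. I argue by contraposition. Assume $(S_t)_{t\in\R}$ is \emph{not} mildly mixing; then there are an ergodic flow $(R_t)_{t\in\R}$ on some space $\ycn$, a factor map $\psi\colon X\to Y$, and a sequence $t_n\to\infty$ with $\nu(R_{t_n}C\triangle C)\to 0$ for every $C\in\cc$. The whole point will be to manufacture \emph{partial rigidity} of $(S_t)_{t\in\R}$ out of this rigid factor, which contradicts the standing hypothesis and proves the lemma.

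Because the factor is non--trivial, the finite fibers factor property forces $\psi$ to be finite--to--one almost everywhere, and by ergodicity the fibre cardinality equals a constant $k$ a.e. Thus everything reduces to the following assertion: \emph{a finite extension of a rigid flow is partially rigid}. To attack it I would fix a measurable trivialization $X\cong Y\times\{1,\dots,k\}$ carrying $\mu$ to $\nu$ times the uniform fibre measure, and write the flow as $S_t(y,i)=(R_t y,\tau_t(y)i)$, where $\tau_t\colon Y\to\mathrm{Sym}(k)$ is the associated fibrewise cocycle. The rigidity of the base is the statement that $R_{t_n}\to\mathrm{id}$ in measure, equivalently $U_{R_{t_n}}\to\mathrm{Id}$ strongly on $L^2(\nu)$, so all the information that remains to be controlled is the behaviour of the permutations $\tau_{t_n}(y)$.

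The mechanism I would exploit is that $\mathrm{Sym}(k)$ is finite, so $\sigma^{k!}=\mathrm{id}$ for every $\sigma$. Passing to the times $k!\,t_n$ and using the cocycle identity $\tau_{k!\,t_n}(y)=\prod_{j=0}^{k!-1}\tau_{t_n}(R_{j t_n}y)$, the aim is to show that this product equals the identity on a set of $y$ of uniformly positive measure; on such a set $S_{k!\,t_n}$ is close to the identity, and integrating over $Y$ yields $\liminf_n\mu(A\cap S_{k!\,t_n}A)\ge\kappa\,\mu(A)$ for all $A\in\cb$ with a constant $\kappa$ of order $1/k!$, i.e.\ partial rigidity along $(k!\,t_n)$. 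More robustly, one may first extract a subsequence along which the distribution of $\tau_{t_n}$ stabilizes and the relevant permutation is a fixed element of $\mathrm{Sym}(k)$ of some finite order $m$, and then pass to the times $m\,t_n$.

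The main obstacle is exactly the control of the telescoped product $\prod_{j=0}^{k!-1}\tau_{t_n}(R_{j t_n}y)$. Base rigidity only provides $R_{j t_n}\to\mathrm{id}$ \emph{in measure} for each fixed $j$, not any equicontinuity of the measurable maps $\tau_{t_n}$, so one cannot naively replace each factor $\tau_{t_n}(R_{j t_n}y)$ by $\tau_{t_n}(y)$ and conclude that the product is the identity everywhere; doing so would even give full rigidity, which is too strong. The technical heart is therefore to upgrade convergence in measure into a \emph{uniformly} positive--measure set of points at which the fibre permutation returns to the identity, so that the partial--rigidity estimate holds for every $A$. Once this quantitative return is secured, the contradiction with the hypothesis that $(S_t)_{t\in\R}$ is not partially rigid is immediate, and the lemma follows.
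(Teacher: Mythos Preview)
The paper does not give a proof of this lemma at all; it simply cites \cite{Fr-Le} and places a \bez. So there is no ``paper's own proof'' to compare against, and your task was really to reconstruct the argument from \cite{Fr-Le}.

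Your overall architecture is exactly right: use the Furstenberg--Weiss characterisation to produce a non-trivial rigid factor $(Y,\cc,\nu,(R_t))$, invoke the finite fibers factor property to make $X\to Y$ a $k$-point extension, and then reduce everything to the assertion that \emph{a finite extension of a rigid flow is partially rigid}. This is indeed the heart of the matter and is precisely the content of the lemma in \cite{Fr-Le} that the present paper is quoting.

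The genuine gap is in your proof of that assertion. You write the extension as a skew product with cocycle $\tau_t\colon Y\to\mathrm{Sym}(k)$, propose to pass to times $k!\,t_n$, and expand $\tau_{k!\,t_n}(y)=\prod_{j=0}^{k!-1}\tau_{t_n}(R_{jt_n}y)$. You then correctly observe that base rigidity gives $R_{jt_n}\to\mathrm{id}$ only in measure, while the maps $\tau_{t_n}$ vary with $n$ and carry no equicontinuity, so one cannot replace $\tau_{t_n}(R_{jt_n}y)$ by $\tau_{t_n}(y)$. You end by saying ``once this quantitative return is secured, the contradiction is immediate'' --- but you never secure it. This is not a technicality: the cocycle route, as written, does not close, and there is no evident way to force the telescoped product to equal the identity on a set of uniformly positive measure.

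The clean way around this obstacle abandons the pointwise cocycle computation and works instead with joinings (equivalently, Markov operators). Pass to a subsequence so that the graph joinings of $S_{t_n}$ converge weakly to a self-joining $\rho$; rigidity of the base forces $\rho$ to be supported on the fibre product $X\times_Y X$, so that $\rho$ is described by a measurable field $y\mapsto M_y$ of doubly stochastic $k\times k$ matrices. The set $\mathcal{L}$ of all weak limits of graph joinings along sequences going to infinity is a semigroup under composition, so every power $\rho^{(m)}$ (with fibre matrices $M_y^m$) again lies in $\mathcal{L}$. Now the diagonal mass is $\rho^{(m)}(\Delta_X)=\int_Y\frac{1}{k}\operatorname{tr}(M_y^m)\,d\nu$, and the Ces\`aro averages of $\operatorname{tr}(M_y^m)$ tend to $\dim\ker(M_y-I)\ge 1$, whence some $m$ satisfies $\rho^{(m)}(\Delta_X)>0$. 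Since $\Delta_X$ is invariant and $(S_t)$ is ergodic, $\rho^{(m)}\ge\rho^{(m)}(\Delta_X)\cdot\Delta$, and realising $\rho^{(m)}$ as a weak limit of graph joinings along some $s_n\to\infty$ yields $\liminf_n\mu(A\cap S_{s_n}A)\ge\rho^{(m)}(\Delta_X)\,\mu(A)$ for every $A$ --- partial rigidity. This is essentially the argument in \cite{Fr-Le}; your permutation-order intuition survives as the passage from $\rho$ to its powers, but the analysis must take place at the level of limit joinings rather than pointwise cocycle values.
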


\begin{Th}\label{mmixing}
Under the assumptions of Theorem~\ref{specialR}, the special flow
$T^f$ is mildly mixing.
\end{Th}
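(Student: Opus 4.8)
The plan is to assemble the results already proved and feed them into the mild mixing criterion of Lemma~\ref{jmm}, which for $T^f$ requires two things: the finite fibers factor property and the absence of partial rigidity. First I would record the ingredients that come essentially for free. In the proof of Theorem~\ref{specialR} it is already shown (via Theorem~\ref{slabemieszanie}) that $f$ satisfies the weak von Neumann condition (\ref{vonNeumann02}) and that $T^f$ is weakly mixing; since $f$ is in particular piecewise $C^1$, Theorem~\ref{aopr} gives that $T^f$ is not partially rigid. Moreover, Theorem~\ref{specialR} provides weak Ratner's property, i.e.\ the $\rat(P)$--property for the compact set $P=[-p_1,-p_0]\cup[p_0,p_1]\subset\R\setminus\{0\}$, and, being a special flow carried by a metric of the form (\ref{prodmet}), $T^f$ satisfies the almost-continuity condition (\ref{zalmetric}).

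The one step needing a genuine argument is to convert the joining dichotomy of Theorem~\ref{prpr} into the finite fibers factor property. Write $\cs=(T^f_t)_{t\in\R}$ and let $\ct=(T_t)_{t\in\R}$ on $\ycn$ be an arbitrary non-trivial factor of $\cs$, realized by a factor map $\psi:X^f\to Y$ with $\psi_*\mu^f=\nu$. I would form the graph joining $\rho\in J(\cs,\ct)$ defined by $\rho(A\times B)=\mu^f(A\cap\psi^{-1}B)$. The map $z\mapsto(z,\psi(z))$ conjugates $(X^f,\mu^f,\cs)$ with $(X^f\times Y,\rho,\cs\times\ct)$, so, $\cs$ being ergodic (indeed weakly mixing), $\rho$ is an ergodic joining. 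By Theorem~\ref{prpr}, either $\rho=\mu^f\ot\nu$ or $\rho$ is a finite extension of $\nu$. In the first case, taking $A=\psi^{-1}B$ and using $\nu(B)=\mu^f(\psi^{-1}B)$ gives $\mu^f(\psi^{-1}B)=\mu^f(\psi^{-1}B)^2$ for every $B\in\cc$, whence $\mu^f(\psi^{-1}B)\in\{0,1\}$ and the factor is trivial, a contradiction. Hence $\rho$ is a finite extension of $\nu$; transported through the conjugacy, the projection $X^f\times Y\to Y$ becomes $\psi$, so $\psi$ is finite-to-one almost everywhere. As the factor was arbitrary, $\cs$ is a finite extension of each of its non-trivial factors, which is precisely the finite fibers factor property.

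With both the finite fibers factor property and the absence of partial rigidity established, Lemma~\ref{jmm} immediately yields that $T^f$ is mildly mixing. The proof is thus essentially an assembly of Theorems~\ref{slabemieszanie}, \ref{aopr}, \ref{prpr} and \ref{specialR}; the only non-routine point is the graph-joining reduction above, translating the joining dichotomy of Theorem~\ref{prpr} into a statement about the fibers of factor maps. I expect no further obstacle, since all the analytic difficulty has already been absorbed into the verification of weak Ratner's property in Theorem~\ref{specialR}.
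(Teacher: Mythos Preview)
Your proposal is correct and follows the same route as the paper: assemble weak Ratner's property (Theorem~\ref{specialR}) with Theorem~\ref{prpr} to get the finite fibers factor property, combine with the absence of partial rigidity (Theorem~\ref{aopr}), and invoke Lemma~\ref{jmm}. The only difference is that you spell out the standard graph-joining reduction from the joining dichotomy of Theorem~\ref{prpr} to the statement about factors, whereas the paper cites Theorem~\ref{prpr} directly and treats this passage as understood.
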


\begin{proof}
In view of Theorem~\ref{prpr}, since the special flow $T^f$ has
weak Ratner's property, it is a finite extension of each of its
non--trivial factors. As $\int f_x(x,y)\,dxdy\neq 0$ or $\int
f_y(x,y)\,dxdy\neq 0$, by Theorem~\ref{aopr}, $T^f$ is not
partially rigid. An application of Lemma~\ref{jmm} completes the
proof.
\end{proof}

\begin{Example} Let us consider the roof function $f:\T^2\to\R_+$ the form
\begin{align*}&f(x,y)=a\{x\}+b\{y\}+c\text{ with }a/b\in\R\setminus\Q\text{
or }\\
&f(x,y)=a\{x\}+b\{y\}+c(\alpha\{y\}-(\{x\}+\alpha)[\{y\}+\beta])+d,
\end{align*}
where $a,b,c$ are independent over $\Q$ with $a\neq c\beta$ or
$b\neq -c\alpha$. By Theorem~\ref{mmixing}, the special flow $T^f$
is mildly mixing provided that $T(x,y)=(x+\alpha,y+\beta)$ is an
ergodic rotation on the torus $\T^2$ such that both $\alpha$ and
$\beta$ have bounded partial quotients.
\end{Example}

In the light of next section it is not however clear whether flows
from Theorem~\ref{mmixing} are not mixing. We will now show that
at least some of them are certainly not mixing. The main idea is
to find $\alpha,\beta\in\T$ so that $1,\alpha,\beta$ are
rationally independent, $\alpha$ and $\beta$ have bounded partial
quotients and the intersection of the sets of denominators of
$\alpha$ and $\beta$ are infinite. Examples of such $\alpha$ and
$\beta$ have been pointed out to us by M. Keane. Below, we present
his argument.

Let $(a_n)_{n\geq 1}$ be a palindromic sequence in
$\{1,\ldots,N\}$ (for some fixed  $N\geq 2$), i.e.\ we assume that
$(a_n)_{n\geq 1}$ has infinitely many prefixes which are
palindromes and $(a_n)$ is not eventually periodic; if in the
standard Thue-Morse sequence $01101001\ldots$ we replace $0$ by
$1$ and $1$ by $2$ the resulting sequence  is  palindromic for
$N=2$, see e.g.\ \cite{Al-Sh}. Let
\[\alpha:=[0;a_1,a_2,\ldots]\text{ and }\beta:=\{1/\alpha\}=[0;a_2,a_3,\ldots].\]
Since $\alpha$ is not quadratic irrational,   $\alpha,1/\alpha,1$
cannot be rationally dependent. Moreover, if $a_1\ldots a_{k_n+1}$
is a palindrome  then in fact
\[\alpha=[0;a_1,a_2,\ldots,a_{k_n},\ldots]\text{ and }
\beta=[0;a_{k_n},a_{k_n-1},\ldots,a_1,\ldots].\] It is classical
that
$$
[0;a_1,a_2,\ldots,a_{k_n}]=\frac{p_n}{q_n}\;\text{ and }\;
[0;a_{k_n},a_{k_n-1},\ldots,a_1]=\frac{r_n}{q_n},$$ so the
$k_n$--th denominators of $\alpha$ and $\beta$ are the same. In
this way we have obtained an infinite sequence  $(q_n)_{n\geq 1}$
for $\alpha$ and $\beta$ (each $q_n$ being the $k_n$--th
denominator of $\alpha$ and $\beta$). Setting
$f(x,y)=a\{x\}+b\{y\}+c$, by the Denjoy-Koksma inequality,
$|f^{(q_n)}(x,y)-q_n\int f\,d\mu|\leq 2(|a|+|b|)$. Since $(q_n)$
is a rigidity sequence for the ergodic rotation
$T(x,y)=(x+\alpha,y+\beta)$, by standard arguments (see
\cite{Ko0}), the special flow $T^f$ is not mixing (in fact, it is
not partially mixing, see Section~\ref{concl}).

\section{Mixing}\label{strongmixing} In this section we will show
that  von Neumann's special flows over ergodic two-dimensional
rotations can be mixing. We will make use of the following
criterion for mixing in which a partial partition of $\T$
 means a partition of a subset of $\T$.

\begin{Prop}[see Proposition 3.3 in \cite{Fa1}]\label{fayadc2}
Let $T^f$ be the special flow built over an ergodic rotation
$T:\T^2\to\T^2$, $T(x,y)=(x+\alpha,y+\beta)$ and under a piecewise
$C^2$ roof function $f:\T^2\to\R_+$. Let $(\tau_n)$, $(\vep_n)$
and $(k_n)$ be sequences of real positive numbers such that
$\tau_n\to\infty$, $\vep_n\to 0$, $k_n\to\infty$ and let
 $(\eta_n)$ be a sequence of partial partitions of $\T$, where
$\eta_n=\{C^{(n)}_i\}$ and $C^{(n)}_i$ are intervals such that
\[\sup_{C^{(n)}_i\in\eta_n}|C^{(n)}_i|\to 0\text{ and }\sum_{C^{(n)}_i\in\eta_n}|C^{(n)}_i|\to 1.\]
Suppose that there exists $n_0$ such that if $n\geq n_0$ then
\begin{itemize}
\item for any $m\in[\tau_{2n}/2,2\tau_{2n+1}]$, $y\in\T$ and
$C^{(2n)}_i\in\eta_{2n}$  the map $C^{(2n)}_i\ni x\mapsto
f^{(m)}(x,y)\in\R$ is of class $C^2$ and
\[k_{2n}\leq \inf_{x\in
C^{(2n)}_i}|f_x^{(m)}(x,y)||C^{(2n)}_i|,\]\[ \sup_{x\in
C^{(2n)}_i}|f_{xx}^{(m)}(x,y)||C^{(2n)}_i|\leq \vep_{2n}\inf_{x\in
C^{(2n)}_i}|f_x^{(m)}(x,y)|.\]
\item for any $m\in[\tau_{2n+1}/2,2\tau_{2n+2}]$, $x\in\T$ and
$C^{(2n+1)}_i\in\eta_{2n+1}$ the map $C^{(2n+1)}_i\ni y\mapsto
f^{(m)}(x,y)\in\R$ is of class $C^2$ and
\[k_{2n+1}\leq \inf_{y\in
C^{(2n+1)}_i}|f_y^{(m)}(x,y)||C^{(2n+1)}_i|,\]\[ \sup_{y\in
C^{(2n)}_i}|f_{yy}^{(m)}(x,y)||C^{(2n+1)}_i|\leq
\vep_{2n+1}\inf_{y\in C^{(2n+1)}_i}|f_{y}^{(m)}(x,y)|.\]
\end{itemize}
Then $T^f$ is mixing.\bez
\end{Prop}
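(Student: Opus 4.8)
The plan is to follow Fayad's proof of Proposition~3.3 in \cite{Fa1}, which establishes exactly this criterion when $f$ is globally $C^2$ and $\eta_n$ is a genuine partition of $\T$. The only two modifications needed for the piecewise $C^2$ statement as worded are: (a) the pieces $C^{(n)}_i$ are required merely to be intervals on which the relevant section of $f^{(m)}$ is $C^2$, so that the finitely many discontinuity lines of $f$ are swept into the complement of $\bigcup_iC^{(n)}_i$; and (b) since $\eta_n$ is only a \emph{partial} partition, one must check that the leftover set, of measure $1-\sum_i|C^{(n)}_i|\to0$, contributes only $o(1)$. Thus the task reduces to recalling Fayad's stretching argument and inserting these two bookkeeping points; I describe the argument at that level.

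First I would reduce mixing to a correlation statement on a total family. Writing $\bar f:=\int_{\T^2}f\,d\lambda_{\T^2}$, it suffices to prove, for $F,G$ of product form $\phi(x,y)\theta(s)$ with $\phi$ continuous on $\T^2$ and $\theta$ continuous with support in the fibre range, that
\[
\int_{X^f}(F\circ T^f_t)\,\overline{G}\,d\mu^f\;\longrightarrow\;\frac1{\bar f}\int_{X^f}F\,d\mu^f\int_{X^f}\overline{G}\,d\mu^f\qquad(t\to+\infty).
\]
Since $\tau_n\to\infty$, the ranges $[\tau_{2n}/2,2\tau_{2n+1}]$ and $[\tau_{2n+1}/2,2\tau_{2n+2}]$ overlap and their union eventually covers $[\tau_N/2,\infty)$; as the number of floors climbed in time $t$ is $m\approx t/\bar f$, every large $t$ places $m$ in one of these blocks, for which the hypotheses furnish uniform stretch either in the $x$- or the $y$-direction. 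Fix such a $t$ and, by the symmetry of the two cases, assume we are in an even block, so that the $x$-direction stretches.

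The analytic heart is the implication ``stretch with bounded distortion $\Rightarrow$ fibre equidistribution,'' carried out on the vertical strips $C^{(2n)}_i\times\T$. On each strip, for every level $y$, the map $C^{(2n)}_i\ni x\mapsto f^{(m)}(x,y)$ is $C^2$ and strictly monotone, with $\inf|f^{(m)}_x|\,|C^{(2n)}_i|\ge k_{2n}$ and relative distortion $\sup|f^{(m)}_{xx}|\,|C^{(2n)}_i|\le\vep_{2n}\inf|f^{(m)}_x|$. Hence $f^{(m)}(\cdot,y)$ is a diffeomorphism of $C^{(2n)}_i$ onto an interval of length $\ge k_{2n}\to\infty$ whose logarithmic derivative varies by at most $\vep_{2n}\to0$, so the push-forward of normalised Lebesgue measure on $C^{(2n)}_i$ is within $O(\vep_{2n}+1/k_{2n})$ of the uniform distribution on that long image interval. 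This forces the fibre coordinate $s+t-f^{(m)}(x,y)$ of $T^f_t$ to equidistribute as $x$ ranges over the strip, while unique ergodicity of the rotation $T$, together with $\sum_i|C^{(2n)}_i|\to1$ and $\sup_i|C^{(2n)}_i|\to0$, distributes the base coordinate $T^m(x,y)$ uniformly over $\T^2$. Integrating against $\phi$ and $\theta$ then yields, strip by strip, the claimed product limit with an error that is uniform across the whole block.

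Finally I would assemble the estimate: sum over $i$, discard the leftover set of measure $1-\sum_i|C^{(2n)}_i|\to0$, and control the at most two floor crossings inside each fibre integral as well as the discontinuities of $f$, all of which cost only $O(\sup_i|C^{(2n)}_i|+1/k_{2n})\to0$ because a single small piece meets only boundedly many floors relative to the stretch. The odd blocks are treated identically with the roles of $x$ and $y$ interchanged, and since the blocks cover all large $t$, mixing follows for every $t\to+\infty$. I expect the main obstacle to be precisely this last bookkeeping: making the bounded-distortion estimate quantitative \emph{uniformly} over the entire time block $[\tau_{2n}/2,2\tau_{2n+1}]$ (so mixing holds along every $t\to\infty$, not merely a subsequence), and verifying that the passage from a genuine to a partial partition, combined with the floor-crossing and discontinuity corrections, is absorbed in the $o(1)$ error.
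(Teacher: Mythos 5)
Your proposal is correct and coincides with what the paper actually does: the authors give no independent proof of Proposition~\ref{fayadc2}, but simply invoke Fayad's Proposition~3.3 of \cite{Fa1} and remark that his stretching argument goes through word for word once the roof is only piecewise $C^2$, since the partial partitions $\eta_n$ (already part of Fayad's formulation) can be chosen to avoid the finitely many discontinuity lines, whose complement has measure $o(1)$. Your sketch of the underlying mechanism --- uniform stretch with $\vep_n$-small distortion forcing fibre equidistribution on each $C^{(n)}_i$, unique ergodicity handling the base, and the leftover set plus floor-crossing corrections absorbed into the $o(1)$ error uniformly over each time block --- is exactly the content of Fayad's proof that the paper defers to.
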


\begin{Remark}
The above criterion for mixing has been formulated by Fayad
\cite{Fa1} only for $C^2$ roof functions. Nevertheless, following
word by word Fayad's  proof we obtain that the assertion holds
whenever $f$ is piecewise $C^2$.
\end{Remark}

Let $(\gamma(n))_{n\in\N}$ be an increasing sequence of positive
real numbers such that $\gamma(1)\geq 1$ and $\gamma(n)\to\infty$.
Choose a pair of irrational numbers $\alpha,\beta\in[0,1)$ such
that denoting by $(q_n)$ and $(r_n)$ the sequences of denominators
for $\alpha$ and $\beta$ respectively we have
\begin{equation}\label{alfabeta}
4\gamma(n-1)\gamma(n)q_n\leq r_n\text{ and }4\gamma(n)^2r_n\leq
q_{n+1}\text{ for all }n\geq 1.
\end{equation}
As it was observed by Yoccoz in \cite[Appendix A]{Yo} the set of
all pairs satisfying~(\ref{alfabeta}) is uncountable. Note that
the rotation $T:\T^2\to\T^2$, $T(x,y)=(x+\alpha,y+\beta)$ is
ergodic. Indeed, if $T$ is not ergodic then there exist integer
numbers $k\neq 0$, $l\neq 0$ and $m$ such that $k\alpha+l\beta=m$.
Next choose $n\in\N$ such that
\begin{equation}\label{duzen}
\gamma(n)>\max(|k|,|l|).
\end{equation}
In view of (\ref{alfabeta}),
\[|lq_n|\leq\gamma(n)q_n<4\gamma(n-1)\gamma(n)q_n\leq r_n\text{ and }
2r_n\leq \frac{q_{n+1}}{2\gamma(n)^2}\leq\frac{q_{n+1}}{2|k|}.
\]
It follows that
\[\|lq_n\beta\|\geq\|r_{n-1}\beta\|\geq\frac{1}{2r_{n}}\geq\frac{2|k|}{q_{n+1}}.\]
Moreover,
\[\|kq_n\alpha\|\leq |k|\|q_n\alpha\|\leq \frac{|k|}{q_{n+1}}.\]
Therefore
\[0=\|q_nm\|=\|q_n(k\alpha+l\beta)\|\geq\|lq_n\beta\|-\|kq_n\alpha\|\geq\frac{|k|}{q_{n+1}}>0,\]
a contradiction.

\begin{Th} \label{smixing}
Let $f:\T^2\to\R_+$ be a piecewise $C^2$--function satisfying
(\ref{vonNeumann03}). For every rotation $T:\T^2\to\T^2$,
$T(x,y)=(x+\alpha,y+\beta)$ satisfying (\ref{alfabeta}) the
special flow $T^f$ is mixing.
\end{Th}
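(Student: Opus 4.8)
The plan is to verify the hypotheses of Fayad's criterion, Proposition~\ref{fayadc2}, choosing the auxiliary sequences $(\tau_n),(\vep_n),(k_n)$ and the partial partitions $(\eta_n)$ directly from the continued fraction data of $\alpha$ and $\beta$ linked by (\ref{alfabeta}). Write $A:=\int_{\T^2}f_x\,dxdy$ and $B:=\int_{\T^2}f_y\,dxdy$; by the \emph{strong} von Neumann condition (\ref{vonNeumann03}) both are nonzero, and this is what lets us run the argument in \emph{both} coordinate directions. Exactly as in the proof of Theorem~\ref{slabemieszanie}, unique ergodicity of $T$ gives $f^{(m)}_x/m\to A$ and $f^{(m)}_y/m\to B$ uniformly on $\T^2$, while crudely $|f^{(m)}_{xx}|\le m\|f_{xx}\|_{C^0}$ and $|f^{(m)}_{yy}|\le m\|f_{yy}\|_{C^0}$. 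Hence there is $m_0$ with $|f^{(m)}_x|\ge|A|m/2$ and $|f^{(m)}_y|\ge|B|m/2$ everywhere once $m\ge m_0$; these uniform lower bounds will feed the stretch condition.

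I would let the time scales alternate between the two frequencies: $\tau_{2n}:=\gamma(n)q_n$ (a horizontal scale) and $\tau_{2n+1}:=\gamma(n)r_n$ (a vertical scale), for $n$ large. The point of (\ref{alfabeta}) is precisely that these are squeezed strictly between consecutive denominators: on the horizontal window $[\tau_{2n}/2,2\tau_{2n+1}]$ one has $q_n\ll\tau_{2n}/2$ and $2\tau_{2n+1}=2\gamma(n)r_n\le q_{n+1}/(2\gamma(n))\ll q_{n+1}$, so every $m$ there satisfies $q_n\le m\ll q_{n+1}$ for $\alpha$; symmetrically $r_n\ll m\ll r_{n+1}$ for $\beta$ on the vertical window $[\tau_{2n+1}/2,2\tau_{2n+2}]$, using $r_{n+1}\ge4\gamma(n)\gamma(n+1)q_{n+1}\gg2\tau_{2n+2}$. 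Since $\tau_n\to\infty$ and consecutive windows overlap, they cover a half-line $[\tau_{n_0}/2,\infty)$, so it suffices to treat one window of each parity; I describe the horizontal one, the vertical being symmetric under $x\leftrightarrow y$, $\alpha\leftrightarrow\beta$, $q_n\leftrightarrow r_n$ (here $B\neq0$ replaces $A\neq0$).

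For fixed $y$ the discontinuities of $x\mapsto f^{(m)}(x,y)$ sit at the $y$--independent set $\{a_j-k\alpha:1\le j\le N,\ 0\le k<m\}$, so a partition interval makes $x\mapsto f^{(m)}(x,y)$ of class $C^2$ for \emph{every} $y$ as soon as it avoids these points. The key geometric input, coming from (\ref{ulla}) together with $q_n\le m\ll q_{n+1}$, is that these at most $Nm$ points are far from equidistributed: writing $k=aq_n+b$ with $0\le b<q_n$ and using $\|q_n\alpha\|\approx1/q_{n+1}$, the points group into at most $Nq_n$ clumps each of diameter $\le m/q_{n+1}$, so the clumps occupy total length $\le Nm/q_{n+1}\to0$ and the complementary gaps (of individual size $\gtrsim1/(Nq_n)$) cover measure tending to $1$. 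I would then let $\eta_{2n}$ consist of intervals of a common length $\ell_{2n}\approx\delta_n/(Nq_n)$ placed inside these gaps, where $\delta_n\to0$ is chosen with $\gamma(n)\delta_n\to\infty$; this forces $\sup_i|C^{(2n)}_i|\to0$ and $\sum_i|C^{(2n)}_i|\to1$, and each interval avoids all discontinuities for every $m\le2\tau_{2n+1}$.

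With these choices the two Fayad inequalities hold on the whole window. The stretch is worst at the left endpoint $m=\tau_{2n}/2$, where $|f^{(m)}_x|\,\ell_{2n}\ge\tfrac{|A|}{2}\cdot\tfrac{\gamma(n)q_n}{2}\cdot\tfrac{\delta_n}{Nq_n}=:k_{2n}\to\infty$ by the choice of $\delta_n$; the distortion bound reads $\sup|f^{(m)}_{xx}|\,\ell_{2n}\le m\|f_{xx}\|_{C^0}\ell_{2n}\le\vep_{2n}\inf|f^{(m)}_x|$ once $\vep_{2n}:=2\|f_{xx}\|_{C^0}\ell_{2n}/|A|\to0$, which holds since $\ell_{2n}\to0$. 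Running the symmetric construction in $y$ on the odd windows supplies the complementary vertical estimates, and Proposition~\ref{fayadc2} then yields mixing. The main obstacle—and the only place the arithmetic of (\ref{alfabeta}) is genuinely used—is the clumping estimate of the third paragraph: one must keep the discontinuity points confined to clumps of vanishing total length while simultaneously letting $m\to\infty$, and this is possible exactly because (\ref{alfabeta}) forces both $\alpha$ and $\beta$ to have very large partial quotients with interleaved denominators, so that long Birkhoff sums still leave large gaps in the base direction.
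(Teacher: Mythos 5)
Your proposal is correct and follows essentially the same route as the paper's proof: both verify Fayad's criterion (Proposition~\ref{fayadc2}) with the time scales $\tau_{2n}\approx\gamma(n)q_n$, $\tau_{2n+1}\approx\gamma(n)r_n$, use (\ref{alfabeta}) together with $\|q_n\alpha\|\le 1/q_{n+1}$ to group the at most $Nm$ discontinuity points of $x\mapsto f^{(m)}(x,y)$ into at most $Nq_n$ clumps of vanishing total length, and take for $\eta_n$ intervals lying in the complementary gaps (the paper keeps the sufficiently long intervals of the partition generated by the discontinuity points themselves, while you tile the gaps with intervals of a common length $\ell_{2n}$ --- an immaterial difference). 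One small caveat: your parenthetical claim that each individual gap has size $\gtrsim 1/(Nq_n)$ is unjustified (gaps between clumps coming from different discontinuity lines $a_l$ can be arbitrarily small), but it is also unnecessary, since there are at most $Nq_n$ gaps, so the gaps shorter than $\ell_{2n}$ together with the tiling remainders contribute measure at most $2Nq_n\ell_{2n}\approx 2\delta_n\to 0$, and your conclusion $\sum_i|C^{(2n)}_i|\to 1$ stands.
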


\begin{proof}
Let $0\leq a_1<\ldots<a_N<1$ and $0\leq b_1<\ldots<b_M<1$ be
points determining the lines  of discontinuities for $f$. Since
$f_x,f_y:\T^2\to\R$ are Riemann integrable function, by the unique
ergodicity of $T$ and (\ref{vonNeumann03}), there exist $\theta>0$
and $m_0\in\N$ such that
\begin{equation}\label{pochdol}
m\theta\leq |(f_x)^{(m)}(x,y)|\text{ and }m\theta\leq
|(f_y)^{(m)}(x,y)|
\end{equation}
for all $(x,y)\in\T^2$ and $m\geq m_0$. Let
\[\Theta=\sup_{(x,y)\in\T^2}\max(|f_{xx}(x,y)|,|f_{yy}(x,y))|.\]
Then
\begin{equation}\label{pochgor}
|(f_{xx})^{(m)}(x,y)|\leq m\Theta\text{ and
}|(f_{yy})^{(m)}(x,y)|\leq m\Theta.
\end{equation}

Choose $n_0\in\N$ such that $q_{n_0},r_{n_0}\geq m_0$. Fix $n\geq
n_0$. Let  $\kappa$ stand for the partition (into intervals) of
$\T$ determined by points $a_l-j\alpha$, $1\leq l\leq N$, $0\leq
j< q_n\left\lceil\frac{q_{n+1}}{\gamma(n)q_n}\right\rceil$
($\lceil x\rceil=\min\{n\in\Z:x\leq n\}$). Set
\[\{C^{(2n)}_i\}=\eta_{2n}=\left\{I\in\kappa:|I|>\frac{1}
{\sqrt{\gamma(n)}q_n}\right\}.\] Recall that for every $1\leq
l\leq N$ the diameter of the partition $\T$ determined by points
$a_l-j\alpha$ for $0\leq j<q_n$ is bounded by
$\frac{1}{q_n}+\frac{1}{q_{n+1}}$. Since $\eta_{2n}$ is finer than
each such partition,
\[\max_{C^{(2n)}_i\in\eta_{2n}}|C^{(2n)}_i|<\frac{1}{q_n}+
\frac{1}{q_{n+1}}<\frac{2}{q_n}\to 0.\]

For every pair $l,j$, where $1\leq l\leq N$ and $0\leq j<q_n$ let
us consider the family of points
\[A_{l,j}=\left\{a_l-(j+iq_n)\alpha:
0\leq
i<\left\lceil\frac{q_{n+1}}{\gamma(n)q_n}\right\rceil\right\}.\]
Note that $\bigcup_{1\leq l\leq N}\bigcup_{0\leq j<q_n}A_{l,j}$
coincides with the set determining $\kappa$. Moreover, for all
$0\leq i,i'<\left\lceil\frac{q_{n+1}}{\gamma(n)q_n}\right\rceil$
we have
\begin{eqnarray*}
\|(a_l-(j+iq_n)\alpha)-(a_l-(j+i'q_n)\alpha)\|&=&\|(i-i')
q_n\alpha\|\leq\frac{q_{n+1}}{\gamma(n)q_n}\|q_n\alpha\|
\\&\leq&\frac{1}{\gamma(n)q_n}<\frac{1}{\sqrt{\gamma(n)}q_n}.
\end{eqnarray*}
It follows that for every pair $l,j$ there exist $0\leq
i(l,j,0),i(l,j,1)<\left\lceil\frac{q_{n+1}}{\gamma(n)q_n}
\right\rceil$
such that
\[A_{l,j}\subset
I_{l,j}:=[a_l-(j+i(l,j,0)q_n)\alpha,a_l-(j+i(l,j,1)q_n)\alpha)]\]
and $|I_{l,j}|<1/(\sqrt{\gamma(n)}q_n)$. Denote by $\kappa_1$ the
family of intervals $I\in\kappa$ such that $I\subset I_{l,j}$ for
some $1\leq l\leq N$ and $0\leq j<q_n$. Since
$|I|<1/(\sqrt{\gamma(n)}q_n)$ for every $I\in\kappa_1$, we have
$\kappa_1\subset\kappa\setminus\eta_{2n}$ and
\[\lambda_\T(\bigcup_{I\in\kappa_1}I)=\sum_{1\leq l\leq N}
\sum_{0\leq j<q_n}|I_{l,j}|<
\frac{Nq_n}{\sqrt{\gamma(n)}q_n}=\frac{N}{\sqrt{\gamma(n)}}.\]
Furthermore, the ends of every interval $I\in
\kappa\setminus\kappa_1$ are of the form
$a_l-(j+i(l,j,s)q_n)\alpha$ for some $1\leq l\leq N$, $0\leq
j<q_n$ and $s=0,1$. It follows that
$\#(\kappa\setminus\kappa_1)\leq Nq_n$.

Let $\kappa_2$ stand for the collection of all
$I\in\kappa\setminus\kappa_1$ such that
$|I|\leq\frac{1}{\sqrt{\gamma(n)}q_n}$. Since $\#\kappa_2\leq
\#(\kappa\setminus\kappa_1)\leq Nq_n$, we obtain
\[\lambda_\T\left(\bigcup_{I\in\kappa_2}I\right)\leq Nq_n\frac{1}{\sqrt{\gamma(n)}q_n}=\frac{N}{\sqrt{\gamma(n)}}.\]
By the definition of $\kappa_1$ and $\kappa_2$,
$\eta_{2n}=\kappa\setminus(\kappa_1\cup\kappa_2)$, and hence
\[\sum_i|C^{(2n)}_i|=1-\lambda_\T\left(\bigcup_{I\in\kappa_1}I\right)-\lambda_\T\left(\bigcup_{I\in\kappa_2}I\right)
\geq 1-\frac{2N}{\sqrt{\gamma(n)}}\to 1.\]

Next let us consider the partition $\kappa'$  of $\T$ determined
by points $b_l-(j+ir_n)\beta$, $1\leq l\leq N$, $0\leq j<r_n$,
$0\leq i \leq
r_n\left\lceil\frac{r_{n+1}}{\gamma(n)r_n}\right\rceil$ and set
\[\{C^{(2n+1)}_i\}=\eta_{2n+1}=\left\{I\in\kappa':|I|>\frac{1}{\sqrt{\gamma(n)}r_n}\right\}.\]
Then
\[\max_{C^{(2n+1)}_i\in\eta_{2n+1}}|C^{(2n+1)}_i|<\frac{2}{r_n}\to
0\text{ and }\sum_i|C^{(2n+1)}_i|\geq
1-\frac{2M}{\sqrt{\gamma(n)}}\to 1.\] Finally for every $n\geq
n_0$ set \[\tau_{2n}=2\gamma(n)q_{n},\;\;
\tau_{2n+1}=2\gamma(n)r_{n},\]
\[\vep_{2n}=\frac{2\Theta}{\theta q_n},\;\;
\vep_{2n+1}=\frac{2\Theta}{\theta r_n},\;\;
k_{2n}=k_{2n+1}=\theta\sqrt{\gamma(n)}.\]

Assume that $m\in[\tau_{2n}/2,2\tau_{2n+1}]$ ($n\geq n_0$) and fix
$y\in\T$. From (\ref{alfabeta}) we have
\begin{equation}\label{um}
m_0<\gamma(n)q_n\leq m\leq 4\gamma(n)r_{n}\leq
q_{n+1}/\gamma(n)\leq q_n
\left\lceil\frac{q_{n+1}}{\gamma(n)q_n}\right\rceil.
\end{equation}
Then every discontinuity of $x\mapsto f^{(m)}(x,y)$ is of the form
$a_l-j\alpha$ with $1\leq l\leq N$, $0\leq j<q_n
\lceil\frac{q_{n+1}}{\gamma(n)q_n}\rceil$, and hence
$C^{(2n)}_i\ni x\mapsto f^{(m)}(x,y)\in\R$ is of class $C^2$ for
every $C^{(2n)}_i\in\eta_{2n}$. Since
$\frac{1}{\sqrt{\gamma(n)}q_n}<|C^{(2n)}_i|<\frac{2}{q_n}$, by
(\ref{pochdol}), (\ref{pochgor}) and (\ref{um}),
\begin{equation*}
\inf_{x\in C^{(2n)}_i}|f_x^{(m)}(x,y)||C^{(2n)}_i|\geq \theta m
\frac{1}{\sqrt{\gamma(n)}q_n}\geq \theta \gamma(n)q_n
\frac{1}{\sqrt{\gamma(n)}q_n}= \theta \sqrt{\gamma(n)}=k_{2n},
\end{equation*}
\[\vep_{2n}\inf_{x\in C^{(2n)}_i}|f_x^{(m)}(x,y)|\geq \frac{2\Theta}{\theta q_n}\theta m=
\frac{2\Theta m}{ q_n}\] and
\[\sup_{x\in
C^{(2n)}_i}|f_{xx}^{(m)}(x,y)||C^{(2n)}_i|\leq\Theta
m\frac{2}{q_n}\leq \vep_{2n}\inf_{x\in
C^{(2n)}_i}|f_x^{(m)}(x,y)|.\]

Similarly, if $m\in[\tau_{2n+1}/2,2\tau_{2n+2}]$ and $n\geq n_0$
then $\gamma(n)r_n\leq m \leq r_{n+1}/\gamma(n)$. Moreover, for
every $x\in\T$ and $C^{(2n+1)}_i\in\eta_{2n+1}$ the function
$C^{(2n+1)}_i\ni y\mapsto f^{(m)}(x,y)\in\R$ is of class $C^2$ and
\begin{equation*}\label{rozcy}
k_{2n+1}\leq \inf_{y\in
C^{(2n+1)}_i}|f_y^{(m)}(x,y)||C^{(2n+1)}_i|,
\end{equation*}
\[\sup_{y\in C^{(2n)}_i}|f_{yy}^{(m)}(x,y)||C^{(2n+1)}_i|\leq
\vep_{2n+1}\inf_{y\in C^{(2n+1)}_i}|f_{y}^{(m)}(x,y)|.\] Now an
application of Proposition~\ref{fayadc2} completes the proof.
\end{proof}

\section{Remarks}\label{concl} As we have already noticed in
Section~\ref{MMM} certainly not all von Neumann's special flows
over two-dimensional rotations are mixing. As a matter of fact, if
assume that $f(x,y)=f_1(x)+f_2(y)$ (we assume tacitly that $f>0$
and $\int_{\T^2}f\,d\la_{\T^2}=1$ and we set $f_0=f-1$) and
$\alpha$ and $\beta$ have a common subsequence of denominators
then basically we will copy results from the one-dimensional case.
Indeed, the strong von Neumann's condition~(\ref{vonNeumann03}) is
reduced to~(\ref{vN0}) for $f_1$ and $f_2$ separately (and $f_i$
is piecewise $C^2$, $i=1,2$). Denote by $(q_n)$ and $(t_n)$ the
sequences of denominators of $\alpha$ and $\beta$ respectively. If
we assume additionally that $\alpha$ and $\beta$ have a common
subsequence of denominators $l_k:=q_{n_k}=t_{m_k}$ for infinitely
many $k\geq1$ then it follows from \cite{Fr-Le0} that the sequence
of centered distributions $(f^{(l_k)}_0)_\ast\to P$ weakly in the
space of probability measures on $\R$ (the probability measure $P$
is concentrated on the interval
$[-(\var\,f_1+\var\,f_2),\var\,f_1+\var\,f_2]$). Thus, by
\cite{Fr-Le0}
\begin{equation}\label{astast}
U_{T^f_{l_k}}\to \int_{\R} U_{T_t^f}\,dP(t)
\end{equation}
in the space of Markov operators on $L^2((\T^2)^f,\la_{\T^2}^f)$,
whence (again by~\cite{Fr-Le0}) $T^f$ is spectrally disjoint from
all mixing flows, which in particular  rules out the possibility
of $T^f$ being mixing; here by $U_{T^f}$ we denote the
corresponding Koopman representation: $U_{T^f_t}F=F\circ T^f_t$
for $t\in\R$. In fact,~(\ref{astast}) implies even the absence of
partial mixing for $T^f$. Indeed, recall that partial mixing means
that there exists a constant $\kappa>0$ such that
$$
\liminf_{t\to\infty}\la_{\T^2}^f(A\cap T^f_t(B))\geq\kappa
\la_{\T^2}^f(A) \la_{\T^2}^f(B)$$ for each $A,B\in\cb^f$. In terms
of Markov operators it follows that for any convergent subsequence
$U_{T^f_{s_k}}\to J$ we have $J=\kappa \Pi_{(\T^2)^f}+(1-\kappa)K$
where $\Pi_{(\T^2)f}(F)=\int_{\T^2}F\,d\la_{\T^2}$ and $K$ is
another Markov operator. Now, if we take $s_k=l_k$ we will obtain
$$\int_{\R} U_{T_t^f}\,dP(t)=\kappa
\Pi_{(\T^2)^f}+(1-\kappa)K$$ which is possible only if $\kappa=0$
(indeed,  otherwise by taking an ergodic decomposition of the
joining corresponding to $K$ we would obtain two different ergodic
decompositions of the joining corresponding to the same Markov
operator, see \cite{Fr-Le0}).

The following natural questions easily follow:

\vspace{1ex}

 1) {\em Is it possible to obtain a mixing strong von
Neumann's flow over the rotation by $(\alpha,\beta)$ if $\alpha$
and $\beta$ have a common subsequence of denominators?}

\vspace{1ex}

2) {\em Given $(\alpha,\beta)\in\T^2$ is there a large class of
piecewise $C^2$ functions  satisfying~(\ref{vonNeumann03}) for
which mixing is excluded?} It seems that such a question makes
sense even in case of smooth functions on $\T^2$. We recall that
mixing of $T^f$ is excluded whenever the sequence
$((f_0^{(n)})_\ast)_{n\geq1}$ does not converge to
$\delta_{\infty}$ in the space of probability measures on
$\R\cup\{\infty\}$, see \cite{Le}, \cite{Sch}.

Note in passing  that the weak convergence of measures
$(f_0^{(n)})_\ast\to\delta_{\infty}$  takes place for all examples
coming from Theorem~\ref{smixing}.

\vspace{1ex}

3) {\em Is it possible to obtain mixing for strong von Neumann's
flows over the rotation by $(\alpha,\beta)$ where $\alpha,\beta$
have bounded partial quotients? More specifically, is mixing
possible in the class of flows considered in
Theorem~\ref{specialR}?} If the answer to the second question is
positive then Theorem~\ref{specialR} would give the first examples
of mixing special flows over rotations having (weak) Ratner's
property. For such flows mixing of all order follows; indeed,
flows having weak Ratner's property are quasi-simple in the sense
of \cite{Ry-Th} and mixing implies mixing of all orders for such
flows \cite{Ry-Th}. Another possibility to obtain mixing of all
orders would be to show that for example if we take
$f(x,y)=a\{x\}+b\{y\}+c$ and $(\alpha,\beta)$
satisfying~(\ref{alfabeta}) then the spectrum of $U_{T^f}$ is
singular: mixing of all orders would follow from \cite{Ho}. We
recall that Fayad in \cite{Fa3} has constructed a smooth
reparametrization of a linear flow on $\T^3$ which is mixing and
has  simple singular spectrum. Such a reparametrization flow has a
representation as the special flow over a two--dimensional
rotation and under a smooth roof function.

\vspace{1ex}

Little is known about the spectrum of weak von Neumann's special
flows. It seems to be completely open whether such flows can have
an absolutely continuous component in the spectrum. This is
impossible over rotations on $\T$ (in fact, in the one dimensional
case we have even spectral disjointness with all mixing flows
\cite{Fr-Le0}). It is neither clear whether such flows can have
simple spectrum -- this remains an open problem even in the one
dimensional case.

Finally, it would be nice to decide whether there exists a weak
von Neumann's special flow over two-dimensional rotations which is
self-similar -- this is impossible for von Neumann's special flows
over rotations on the circle \cite{Fr-Le3}.


\begin{thebibliography}{99}
\bibitem{Al-Sh}J.-P. Allouche, J. Shallit, {\em Automatic
Sequences. Theory, applications, generalizations}, Cambridge Univ.
Press, 2003.

\bibitem{Ar}V.I. Arnold, {\em Topological and ergodic properties of
closed 1-forms with incommensurable periods}, (Russian)
Funktsional. Anal. Prilozhen. {\bf 25} (1991), 1-12.

\bibitem{Co-Fo-Si}I.P.\ Cornfeld, S.V.\ Fomin, Ya.G.\ Sinai,
{\em Ergodic Theory},
Springer-Verlag, New York, 1982.

\bibitem{Fa2}B. Fayad,
{\em Polynomial decay of correlations for a class of smooth flows
on the two torus},
 Bull. Soc. Math. France {\bf 129} (2001),  487--503.

\bibitem{Fa1}B.\ Fayad, {\em Analytic mixing reparametrizations of
irrational flows}, Ergodic Theory Dynam.\ Systems \textbf{22}
(2002), 437-468.

\bibitem{Fa3}B. Fayad, {\em  Smooth mixing flows with purely singular spectra},
 Duke Math. J. {\bf 132} (2006),  371--391.

\bibitem{Fr-Le0}K. Fr\k{a}czek, M.\ Lema\'nczyk, {\em A class of special
flows over irrational rotations which is disjoint from mixing
flows}, Ergodic Theory Dynam. Systems {\bf 24} (2004), 1083-1095.

\bibitem{Fr-Le}K.\ Fr\k{a}czek, M.\ Lema\'nczyk, {\em
On mild mixing of special flows over irrational rotations under
piecewise smooth functions}, Ergodic Theory Dynam.\ Systems
\textbf{26} (2006), 719-738.

\bibitem{Fr-Le-Le}K.\ Fr\k{a}czek, M.\ Lema\'nczyk, E.\
Lesigne, {\em Mild mixing property for special flows under
piecewise constant functions}, Discrete Contin.\ Dynam.\ Syst.\
{\bf 19} (2007), 691-710.

\bibitem{Fr-Le3}K.\ Fr\k{a}czek, M.\ Lema\'nczyk,
{\em On the self-similarity problem for flows}, Proc. London Math.
Soc. \textbf{99} (2009), 658-696.

\bibitem{Fu-We}H.\ Furstenberg, B.\ Weiss, {\em The
finite multipliers of infinite ergodic transformations. The
structure of attractors in dynamical systems} (Proc.\ Conf., North
Dakota State Univ., Fargo, N.D., 1977), Lecture Notes in Math.\
{\bf 668}, Springer, Berlin, 1978, 127--132.


\bibitem{Ho}B. Host, {\em Mixing of all orders and pairwise
independent joinings of systems with singular spectrum}, Israel J.
Math. {\bf 76} (1991), 289-298.

\bibitem{Iw-Le-Mu}A.\ Iwanik, M.\ Lema\'nczyk, C.\ Mauduit, {\em
Piecewise absolutely continuous cocycles over irrational
rotations}, J.\ London Math.\ Soc.\ (2) \textbf{59} (1999),
171-187.

\bibitem{KaR}A.\ Katok, {\em Cocycles, cohomology and combinatorial
constructions in ergodic theory} (in collaboration with E. A.
Robinson, Jr.), in Smooth Ergodic Theory and its applications,
Proc.\ Symp.\ Pure Math., \textbf{69} (2001), 107-173.

\bibitem{Ka-Ha}A.\ Katok, B. Hasselblatt, {\em Introduction to the modern
theory of dynamical systems}, Encyclopedia of Mathematics and its
Applications, 54. Cambridge University Press, Cambridge, 1995.

\bibitem{Kh-Si}K.M. Khanin, Ya.G. Sinai, {\em Mixing of some classes of special
flows over rotations of the circle}, Funct. Anal. Appl. {\bf 26}
(1992), 155-169.

\bibitem{Ch}Y.\ Khinchin, {\em Continued Fractions},
Chicago Univ.\ Press, Chicago, 1964.

\bibitem{Ko0}A.V. Ko$\check{\mbox{c}}$ergin, {\em On the absence
of mixing in special flows over the rotation of a circle and in
flows on a two-dimensional torus}, (Russian) Dokl. Akad. Nauk SSSR
{\bf 205} (1972), 949-952.

\bibitem{Ko00}A.V. Ko$\check{\mbox{c}}$ergin, {\em Mixing in
special flows over a rearrangement of segments and in smooth flows
on surfaces}, Mat. USSR Sbornik {\bf 25} (1975), 471-502.

\bibitem{Ko76}A.V.\ Kochergin, {\em Non-degenerated saddles
and absence of mixing}, Mat.\ Zametki {\bf 19} (1976), 453-468.

\bibitem{Ko1}A.V. Kochergin, {\em A mixing special flow over a
rotation of the circle with an almost Lipschitz function}, Sb.
Math. {\bf 193} (2002), 359-385.

\bibitem{Ko2}A.V. Kochergin, {\em
Nondegenerate fixed points and mixing in flows on a
two-dimensional torus. II.}, (Russian) Mat. Sb. {\bf 195} (2004),
15--46.

\bibitem{Ko3}A.V. Kochergin, {\em Causes of stretching of Birkhoff
sums and mixing in flows on surfaces}, Dynamics, ergodic theory,
and geometry, 129--144, Math. Sci. Res. Inst. Publ., 54, Cambridge
Univ. Press, Cambridge, 2007.

\bibitem{Le}M. Lema\'nczyk, {\em Sur l'absence de m\'elange pour des flots
sp\'eciaux au dessus d'une rotation irrationnelle},  Coll.\ Math.\
{\bf 84/85} (2000), 29-41.


\bibitem{vonN}J.\ von Neumann, {\em Zur Operatorenmethode
in der Klassichen Mechanik}, Annals
Math. {\bf 33} (1932), 587-642.

\bibitem{Ra}M.\ Ratner, {\em
Horocycle flows, joinings and rigidity of products}, Ann.\ of
Math.\ (2) \textbf{118} (1983), 277-313.

\bibitem{Ry-Th} V.V.\ Ryzhikov, J.-P.\ Thouvenot, {\em Disjointness, divisibility, and quasi-simplicity of
measure-preserving actions}, (Russian) Funktsional. Anal. i
Prilozhen. {\bf 40} (2006), 85--89.

\bibitem{Th}J.-P.\ Thouvenot, {\em Some properties and applications
of joinings in ergodic theory}, in Ergodic theory and its
connections with harmonic analysis (Alexandria, 1993), London
Math.\ Soc.\ Lecture Note Ser., \textbf{205}, Cambridge Univ.\
Press, Cambridge, 1995, 207-235.




\bibitem{Sch}K. Schmidt, {\em Dispersing cocycles and mixing
flows under functions}, Fund.
Math. {\bf 173} (2002),  191--199.

\bibitem{Wi} D.\ Witte, {\em Rigidity of some translations
on homogeneous spaces,} Invent.\ Math.\ \textbf{81} (1985), 1-27.

\bibitem{Yo}J.-Ch.\ Yoccoz, {\em Centralisateurs et
conjugaison diff\'erentiable des diff\'eomorphismes du cercle},
 Petits diviseurs en dimension $1$, Astérisque No. 231 (1995),
 89--242.
\end{thebibliography}
\end{document}